\newcommand{\R}{\mathbb{R}}
\newcommand{\Mod}[1]{\left\vert{#1}\right\vert}
\newcommand{\nrm}[1]{\left\|#1\right\|}
\newcommand{\Uscr}{\mathscr{U}}
\newtheorem{defi}{Definition}
\newtheorem{lema}{Lemma}
\newtheorem{rmk}{Remark}
\newtheorem{prop}{Proposition}
\newtheorem{teor}{Theorem}
\newtheorem{coro}{Corollary}
\begin{document}

\title{r-Regularity
}

\author{Pedro Duarte         \and
        Maria Joana Torres
}



\maketitle

\begin{abstract}
We provide a characterization of $r$-regular sets in terms of the Lipschitz re\-gu\-la\-rity of normal vector fields to the boundary.

\end{abstract}

\maketitle

\section{Introduction}
\label{sec:0}
The fundamental task of digital image processing is to recognize the properties of
real objects given their digital images, i.e., discrete data generated by some image acquisition system.
An important question for devising reliable image analysis algorithms is:
{\em which shapes can be digitized without changes in the fundamental geometric or topological properties?}
Most of the known answers to this question consider, as suitable shapes, the class
of $r$-{\em regular sets} (see \cite{KS,LCG,P,Se,S,SK,SLS}).  
Furthermore, $r$-regular sets have been applied in the context of surface reconstruction and image segmentation \cite{MKS,S1,ST},
which shows that the topic of this paper ($r$-regularity) is very important in the context of many applications. These applications also include the
authors' motivation which comes from the study of
smooth non-deterministic dynamical systems, i.e., the dynamics of `smooth' point-set maps on
a compact manifold, where $r$-regular sets can appear as dynamically invariant sets (see~\cite{DT}).

In~\cite{LCG},
conditions were derived relating properties of regular sets to the grid size of the sampling device which guarantee that a regular object and its digital image are
topologically equivalent. To obtain the topological equivalence the authors
used the fact that a  regular set is always bounded by a codimension one manifold.
This property was conjectured in~\cite[p. 145]{LCG} and was proved recently by the authors in~\cite{DT2}.
One of the reviewers of the work in~\cite{DT2} gave us a suggestion concerning future work, namely a characterization of $r$-regular sets
in terms of the Lipschitz regularity of normal vector fields to the boundary.
In this paper we provide this characterization.

\section{Preliminary Definitions}
\label{sec:1}
In this section we summarize the basic smoothness concepts used in the proofs.
For more details, we refer the reader to~\cite{MS}.
Let $\mathscr{L}(\R^n,\R^m)$ denote the vector space of linear maps
$L:\R^n\to\R^m$. Given  some open set $U\subseteq\R^ n$ in the Euclidean space $\R^ n$,
a map $f:U\to\R^m$ is said to be of {\em class} $C^1$ if and only if there is
a continuous function $Df:U\to \mathcal{L}(\R^n,\R^m)$ such that for $x\in U$, as $v\to 0$ in $\R^ n$, one has
$f(x+v)=f(x)+Df_x(v) +o(\nrm{v})$, i.e., given $x\in U$ and  $\varepsilon>0$ there is $\delta>0$ such that
for every $v\in \R^n$ with $\nrm{v}\leq\delta$ and $x+v\in U$ one has
$\nrm{f(x+v)-f(x)-Df_x(v)}\leq \varepsilon\,\nrm{v}$.
For each $x\in U$, the linear map $Df_x$ is unique and called the {\em differential} of $f$ at point $x$.
The map $Df:U\to \mathcal{L}(\R^n,\R^m)$ is called the {\em total derivative} of $f$.
A map $f: U \to \R^m$ is said to be of {\em class} $C^{1+{\rm Lip}}$ if and only if $f$ is of class $C^1$
and $Df:U\to \mathcal{L}(\R^n,\R^m)$ is Lipschitz. We shall denote by ${\rm Lip}(f)$ the Lipschitz constant of a 
Lipschitz map $f$.

Radamacher's theorem states that a Lipschtiz map is differentiable almost everywhere (see~\cite[Section 3.1.6.]{F}).

\begin{teor}(Rademacher's theorem)
\label{Rademacher} If $f: \R^n \to \R^m$ is Lipschitz, then $f$ is differentiable at almost all points of $\R^n$.
\end{teor}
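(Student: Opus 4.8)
The plan is to reduce to the scalar-valued case, show that directional derivatives exist almost everywhere, identify them with the gradient through a distributional computation, and then pass to full differentiability by a compactness argument over directions. First I would note that $f=(f_1,\dots,f_m)$ is differentiable at a point precisely when each coordinate $f_i:\R^n\to\R$ is, and that each $f_i$ is Lipschitz with $\Lip(f_i)\le\Lip(f)$; hence it suffices to treat the case $m=1$, and I set $L=\Lip(f)$. For a fixed unit vector $v$, the function $t\mapsto f(x+tv)$ is Lipschitz on $\R$, hence absolutely continuous and therefore differentiable for almost every $t$; Fubini's theorem, applied in coordinates adapted to the splitting $\R^n=\R v\oplus v^{\perp}$, then yields that the directional derivative $D_vf(x)=\lim_{t\to0}(f(x+tv)-f(x))/t$ exists for almost every $x$. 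In particular the partial derivatives exist almost everywhere, so the gradient $\nabla f=(\partial_1f,\dots,\partial_nf)$ is defined a.e.\ and satisfies $\nrm{\nabla f(x)}\le L$.

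Next I would prove the linearity relation $D_vf(x)=\langle\nabla f(x),v\rangle$ for almost every $x$. Fixing a test function $\varphi\in C_c^{\infty}(\R^n)$ and changing variables gives the identity
\[
\int_{\R^n}\frac{f(x+tv)-f(x)}{t}\,\varphi(x)\,dx=\int_{\R^n}f(x)\,\frac{\varphi(x-tv)-\varphi(x)}{t}\,dx.
\]
Since every difference quotient of $f$ is bounded by $L$ and $\varphi$ has compact support, dominated convergence lets me take $t\to0$ on both sides, producing $\int D_vf\,\varphi=-\int f\,\langle\nabla\varphi,v\rangle$ for every $v$. Specializing to the coordinate directions $v=e_i$ gives $\int\partial_if\,\varphi=-\int f\,\partial_i\varphi$, and substituting this back yields $\int D_vf\,\varphi=\int\langle\nabla f,v\rangle\,\varphi$. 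As $\varphi$ was arbitrary, $D_vf=\langle\nabla f,v\rangle$ almost everywhere.

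The final and most delicate step is to upgrade almost-everywhere existence of directional derivatives to genuine differentiability on a single full-measure set; the difficulty is that differentiability demands uniform control over the uncountably many directions, whereas the preceding steps only control each direction separately. I would fix a countable dense set $\{v_k\}\subseteq S^{n-1}$, discard the null set on which $\nabla f$ fails to exist together with each null set on which $D_{v_k}f(x)\ne\langle\nabla f(x),v_k\rangle$, and work at a point $x$ outside this (still null) union. Writing $Q(v,t)=(f(x+tv)-f(x))/t-\langle\nabla f(x),v\rangle$, the Lipschitz bound yields the estimate $\Mod{Q(v,t)-Q(w,t)}\le 2L\,\nrm{v-w}$, uniform in $t$. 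Combining this equicontinuity in $v$ with $Q(v_k,t)\to0$ for each $k$ and the compactness of $S^{n-1}$, a standard three-term estimate shows $Q(v,t)\to0$ uniformly over $v\in S^{n-1}$ as $t\to0$, which is exactly the statement that $f$ is differentiable at $x$ with $Df_x(\cdot)=\langle\nabla f(x),\cdot\,\rangle$. I expect this reconciliation of the uncountable family of directions, made possible by the uniform Lipschitz dependence on $v$, to be the principal obstacle.
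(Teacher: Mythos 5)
Your proof is correct, and it is the standard modern argument for Rademacher's theorem (the one in Evans--Gariepy, structurally the same as Federer's). Note that the paper itself does not prove this statement: it is quoted as a classical result with a pointer to \cite[Section 3.1.6]{F}, so there is no internal proof to compare against; your writeup would in effect be supplying the omitted proof.

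The one point you should make explicit is the measurability of the set $A_v$ of points $x$ where $D_vf(x)$ exists, which is needed before Fubini can be invoked and again when you discard the exceptional sets in the last step. This is routine: the upper and lower difference quotients $\limsup_{t\to 0}$ and $\liminf_{t\to 0}$ of $\bigl(f(x+tv)-f(x)\bigr)/t$, taken over rational $t$ (harmless by continuity of $f$), are Borel functions of $x$, and $A_v$ is the set where they agree and are finite. Everything else --- the componentwise reduction to $m=1$, the absolute continuity of $t\mapsto f(x+tv)$ on lines, the integration-by-parts identification $D_vf=\langle\nabla f,v\rangle$ a.e.\ via dominated convergence, and the final upgrade to genuine differentiability using a countable dense subset of $S^{n-1}$ together with the estimate $\Mod{Q(v,t)-Q(w,t)}\leq 2L\,\nrm{v-w}$ uniform in $t$ --- is exactly right, and you correctly identify that last compactness step as the place where the real content of the theorem sits.
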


This work deals essentially with class $C^{1+{\rm Lip}}$ differentiability,
but since we mention $C^2$ smoothness we also provide the definition.
A map $f:U\to\R^m$ is said to be of {\em class} $C^2$ if and only if $f$ is of class $C^ 1$ and
$Df:U\to \mathcal{L}(\R^n,\R^m)$ is also of class $C^ 1$.
The total derivative $D(Df)$ is a function with values in the vector
space $\mathcal{L}(\R^n,\mathcal{L}(\R^n,\R^m))$, which can be identified with
the space of symmetric bilinear forms $B:\R^n\times\R^n\to\R^m$.
Hence, $D(Df)_x$ identifies with a symmetric bilinear form $D^ 2 f_x:\R^n\times\R^n\to\R^m$,
referred as the second order differential of $f$ at point $x$.
It follows from the definition of a class $C^2$ map that for any $x\in U$,
$f(x+v)=f(x)+Df_x(v) + \frac{1}{2}\, D^ 2 f_x(v,v) + o(\nrm{v}^ 2)$,
as $v\to 0$ in $\R^ n$. Equivalently,
a function is of class $C^2$ if and only if it has continuous partial
derivatives up to the second order.

Given open sets $U,V\subseteq \R^n$, a map $h:U\to V$ is called
a {\em diffeomorphism} of class $C^k$ if and only if $h:U\to V$
is bijective, while the maps $h:U\to V$ and $h^{-1}:V\to U$ are of class $C^k$.

Take $0\leq d \leq n$.
A subset $M\subseteq \R^n$ is called a {\em manifold} of class $C^ k$ and dimension $d$ if and only if
for every $x\in M$ there are open sets $U,V\in\R^n$, with $x\in U$ and $0\in V$,
and  a diffeomorphism of class $C^k$ $h:U\to V$ such that
$h(M\cap U)=(\R^d\times\{0\})\cap V$. This means that $M$ is locally equivalent
`up to a diffeomorphism' to the $d$-dimensional subspace $\R^d\times\{0\}$ of $\R^ n$.
The number $n-d$ is called the {\em codimension} of $M$ in $\R^n$
A practical way of proving that a subset $M\subseteq \R^n$ is a manifold is the following
{\em pre-image theorem}.

\begin{teor} Given an open set $V\subseteq \R^n$ and a map $f:V\to\R^{m}$ of class $C^k$,
with $0\leq m\leq n$, assume that $Df_x:\R^n\to\R^m$ is surjective, for every $x\in V$
such that $f(x)=0$. Then $f^{-1}(0)=\{\, x\in V\,:\, f(x)=0\,\}$ is a manifold of class
$C^k$ and dimension $d=n-m$.
\end{teor}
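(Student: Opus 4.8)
The plan is to produce, around each point of $f^{-1}(0)$, a local $C^k$ diffeomorphism that straightens the zero set onto a coordinate subspace, which is exactly what the definition of a manifold demands; the engine for this is the inverse function theorem. First I would fix an arbitrary $x_0\in f^{-1}(0)$ and set $d=n-m$. Since $Df_{x_0}:\R^n\to\R^m$ is surjective by hypothesis, its kernel $K=\Ker Df_{x_0}$ has dimension $d$. I would choose a linear complement so that $\R^n=K\oplus W$ with $\dim W=m$ and $Df_{x_0}|_W:W\to\R^m$ a linear isomorphism. After an initial linear change of coordinates (a global diffeomorphism) I may assume $K=\R^d\times\{0\}$ and $W=\{0\}\times\R^m$, writing points as $x=(u,w)$ with $u\in\R^d$ and $w\in\R^m$.

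Next I would define $h:V\to\R^n$ by $h(x)=(u,f(x))$, that is, the first $d$ coordinates of $x$ followed by the $m$ components of $f$. This map is of class $C^k$ because $f$ is. Computing its differential at $x_0$ in block form with respect to the splitting $\R^n=\R^d\times\R^m$, one gets a block-triangular matrix whose diagonal blocks are the identity on $\R^d$ and the isomorphism $Df_{x_0}|_W$; hence $Dh_{x_0}$ is invertible. By the inverse function theorem there is an open neighborhood $U\subseteq V$ of $x_0$ on which $h$ restricts to a $C^k$ diffeomorphism onto an open set. Post-composing with the translation sending $h(x_0)=(u_0,0)$ to the origin (a diffeomorphism preserving $\R^d\times\{0\}$) yields a chart whose image contains $0$.

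Finally, by construction a point $x\in U$ lies in $f^{-1}(0)$ if and only if the last $m$ coordinates of $h(x)$ vanish, so $h\bigl(f^{-1}(0)\cap U\bigr)=(\R^d\times\{0\})\cap h(U)$. This is precisely the flattening condition in the definition of a $d$-dimensional manifold of class $C^k$. Since $x_0$ was arbitrary, $f^{-1}(0)$ is such a manifold with $d=n-m$.

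The only genuinely nontrivial input is the inverse function theorem; everything else is the linear-algebraic bookkeeping needed to see that surjectivity of $Df_{x_0}$ forces $Dh_{x_0}$ to be an isomorphism. The point I expect to require the most care is that the coordinate splitting and the neighborhood $U$ both depend on $x_0$, so one must verify that the chart really has the required product form at \emph{every} zero, rather than at a single distinguished one; this is exactly the local nature that the manifold definition permits, and an equivalent route would be to invoke the implicit function theorem to exhibit $f^{-1}(0)$ locally as the graph of a $C^k$ map over $\R^d$.
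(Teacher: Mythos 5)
Your proof is correct, and it uses exactly the technique the paper itself employs when it proves the analogous $C^{1+{\rm Lip}}$ version (Proposition~\ref{pre-image-C1lip}): augment $f$ with a complementary linear projection to obtain a map whose differential at the base point is an isomorphism, apply the inverse function theorem, and read off the flattening of $f^{-1}(0)$ from the last $m$ coordinates. The paper states this particular $C^k$ theorem without proof, treating it as standard, so there is nothing in your argument that diverges from, or is missing relative to, what the authors do.
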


When $m=1$ the differential of $f$ at $x$ can be written as
$Df_x(v)=\langle \nabla f(x), v\rangle$, where $\nabla f(x)$ denotes the gradient vector
$\nabla f(x)=\left(\frac{\partial f}{\partial x_1}(x),\ldots, \frac{\partial f}{\partial x_n}(x)\right)$.
In this case, the pre-image theorem specializes as follows.

\begin{coro} \label{pre-image}
Given a map $f:V\to\R$ of class $C^k$ on an open set $V$,
define $M=f^{-1}(0)$. If $\nabla f(x)\neq 0$  for every $x\in M$,
then $M$ is a codimension one manifold of class $C^k$.
\end{coro}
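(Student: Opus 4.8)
The plan is to deduce this directly from the pre-image theorem by specializing to the case $m=1$, so that $\R^m=\R$. The entire content of the corollary then reduces to a single observation: the hypothesis $\nabla f(x)\neq 0$ for all $x\in M$ is exactly the surjectivity hypothesis $Df_x:\R^n\to\R$ surjective required by the pre-image theorem. Once this equivalence is in place, the conclusion follows with $d=n-m=n-1$, that is, $M$ has codimension one.

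First I would recall, as noted just above the statement, that for a scalar-valued map the differential is represented by the gradient via $Df_x(v)=\langle \nabla f(x),v\rangle$. Thus $Df_x:\R^n\to\R$ is a linear functional, and its image is a linear subspace of $\R$, hence either $\{0\}$ or all of $\R$. I would then check that $Df_x$ is surjective onto $\R$ if and only if $\nabla f(x)\neq 0$: if $\nabla f(x)=0$ then $Df_x(v)=0$ for every $v\in\R^n$, so the image is $\{0\}$; conversely, if $\nabla f(x)\neq 0$, then taking $v=\nabla f(x)$ gives $Df_x(v)=\langle \nabla f(x),\nabla f(x)\rangle=\nrm{\nabla f(x)}^2>0$, so the image is all of $\R$.

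Finally, having verified that the surjectivity hypothesis holds at every $x\in M=f^{-1}(0)$, I would invoke the pre-image theorem with $m=1$ to conclude that $M$ is a manifold of class $C^k$ and dimension $n-1$, i.e.\ of codimension one. I do not expect any genuine obstacle here, since the substantive work is already carried out in the pre-image theorem; the only step requiring attention is the elementary translation between surjectivity of a linear functional and non-vanishing of the gradient, which is the reason the corollary is phrased in terms of $\nabla f$ rather than $Df$.
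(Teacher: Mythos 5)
Your argument is correct and is exactly the route the paper intends: the corollary is stated as the $m=1$ specialization of the pre-image theorem, with the only content being the observation (which you verify carefully) that surjectivity of the linear functional $Df_x(v)=\langle\nabla f(x),v\rangle$ is equivalent to $\nabla f(x)\neq 0$. No issues.
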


This result can be easily generalized to $C^{1+{\rm Lip}}$ manifolds.


\begin{prop}\label{pre-image-C1lip}
Given a map $f:V\to\R$ of class $C^{1+{\rm Lip}}$ on an open set $V$,
define $M=f^{-1}(0)$. If $\nabla f(x)\neq 0$  for every $x\in M$,
then $M$ is a codimension one manifold of class $C^{1+{\rm Lip}}$.
In particular it admits
an atlas consisting of charts  (local diffeomorphisms) of class
$C^{1+{\rm Lip}}$.
\end{prop}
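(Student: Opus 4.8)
The plan is to re-run the proof of Corollary~\ref{pre-image} using the classical implicit function theorem and then upgrade the regularity of the resulting local graph from $C^1$ to $C^{1+\Lip}$ by differentiating the defining identity. Fix a point $x_0\in M$. Since $\nabla f(x_0)\neq 0$, at least one partial derivative is nonzero, so after a permutation of coordinates I may assume $\partial_{x_n} f(x_0)\neq 0$ and write $x=(y,t)$ with $y\in\R^{n-1}$, $t\in\R$, and $x_0=(y_0,t_0)$. Because $f$ is in particular of class $C^1$, the classical implicit function theorem produces open sets $W\ni y_0$ in $\R^{n-1}$ and $I\ni t_0$ in $\R$, together with a $C^1$ map $g:W\to I$, such that on $W\times I$ the zero set $M$ coincides with the graph $\{(y,g(y)):y\in W\}$.

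The heart of the matter is to show that this $g$ is in fact of class $C^{1+\Lip}$. Differentiating the identity $f(y,g(y))\equiv 0$ and writing $\nabla f=(\nabla_y f,\partial_{x_n} f)$, with $\nabla_y f=(\partial_{x_1} f,\ldots,\partial_{x_{n-1}} f)$, yields the familiar formula
\[
\nabla g(y)=-\,\frac{\nabla_y f(y,g(y))}{\partial_{x_n} f(y,g(y))}.
\]
I would then argue that the right-hand side is Lipschitz on a possibly smaller neighborhood of $y_0$. Since $f$ is $C^{1+\Lip}$, every component of $\nabla f$ is Lipschitz; the graph map $y\mapsto(y,g(y))$ is Lipschitz because $g$ is $C^1$, hence locally Lipschitz; therefore both $y\mapsto\nabla_y f(y,g(y))$ and $y\mapsto\partial_{x_n} f(y,g(y))$ are compositions of Lipschitz maps and so are Lipschitz. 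Shrinking $W$ to a relatively compact neighborhood on which $\partial_{x_n} f$ stays bounded away from zero (possible by continuity, as $\partial_{x_n}f(x_0)\neq 0$), the numerator is bounded and the denominator is bounded away from zero, so the quotient is again Lipschitz. Hence $\nabla g$ is Lipschitz, i.e.\ $g$ is of class $C^{1+\Lip}$.

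Finally I would construct the chart that straightens $M$. The map $h(y,t)=(y,\,t-g(y))$ is a bijection of $W\times I$ onto its image, with inverse $(y,s)\mapsto(y,\,s+g(y))$; the derivatives of $h$ and $h^{-1}$ are assembled from the identity blocks and from $\nabla g$, so both maps are of class $C^{1+\Lip}$, and $h$ carries $M\cap(W\times I)$ onto $(\R^{n-1}\times\{0\})$ intersected with the image. After composing with the translation taking $h(x_0)$ to the origin, this is exactly a $C^{1+\Lip}$ chart in the sense of the manifold definition, with $d=n-1$. Letting $x_0$ range over $M$ produces the required atlas and shows that $M$ is a codimension-one manifold of class $C^{1+\Lip}$.

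The only genuine work beyond the classical statement lies in the middle paragraph, and the main obstacle there is the elementary but essential bookkeeping: verifying that composition with the $C^1$ (hence locally Lipschitz) graph map preserves the Lipschitz property, and that a quotient of Lipschitz functions is Lipschitz once the denominator is bounded away from zero. Everything else is a direct transcription of the proof of Corollary~\ref{pre-image} with the extra regularity carried along.
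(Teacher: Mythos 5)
Your proof is correct, but it follows a different route from the paper's. You localize via the implicit function theorem, represent $M$ as a graph $t=g(y)$, and upgrade $g$ to $C^{1+{\rm Lip}}$ by showing that the explicit formula $\nabla g(y)=-\nabla_y f(y,g(y))/\partial_{x_n}f(y,g(y))$ is a quotient of Lipschitz functions with denominator bounded away from zero on a shrunken compact neighbourhood; the chart is then the shear $h(y,t)=(y,t-g(y))$. The paper instead augments $f$ to a map $\Phi(x)=(f(x),P(x))$, where $P$ is a linear projection chosen so that $D\Phi_p$ is an isomorphism, applies the inverse function theorem, and deduces that $\Phi^{-1}$ is $C^{1+{\rm Lip}}$ from the identity $D\Phi^{-1}=(D\Phi)^{-1}\circ\Phi^{-1}$ together with the (local) Lipschitz property of matrix inversion; the chart is $\pi\circ\Phi$ restricted to $M$. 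The two arguments are close cousins, but the key auxiliary fact differs: yours is the elementary ``quotient of Lipschitz functions is Lipschitz'' bookkeeping, the paper's is the Lipschitz regularity of $A\mapsto A^{-1}$ on a neighbourhood of an invertible matrix. Your version is more explicit and self-contained in coordinates; the paper's is shorter, coordinate-free, and avoids the case split on which partial derivative is nonzero. Both are complete proofs of the statement.
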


\begin{proof}
Given a point $p\in M=f^{-1}(0)$, consider a surjective linear map
$P:\R^n\to\R^{n-1}$ such that ${\rm Ker}(P)^\perp={\rm Ker}(D f_p)$.
Defining $\Phi: V\to\R^n$, $\Phi(x)=(f(x),P(x))$, $D\Phi_p:\R^n\to\R^n$ is an isomorphism.
Thus $\Phi$ is a mapping of class $C^{1+{\rm Lip}}$ and a 
 $C^1$ diffeomorphsim on some small neighbourhood $U$ of $p$.
 The inverse map $\Phi^{-1}:\Phi(U)\to U$ is of class $C^1$
 with derivative
 $D \Phi^{-1}_x = \left( D \Phi_x \right)^{-1}$, and since the matrix inversion is a 
 Lipschitz mapping,  $\Phi^{-1}$ is of class $C^{1+{\rm Lip}}$. 
 Denoting by $\pi:\R\times\R^{n-1}\to\R^{n-1}$ the canonical projection,
 $\pi(t,x)=x$, the mapping $\phi=\pi\circ\Phi\vert_{M\cap U}:M\cap U\to \R^{n-1}$
 is a local chart of class $C^{1+{\rm Lip}}$ for $M$.
\end{proof}

\section{Main Statement}
\label{sec:2}

The class of $r$-regular sets was independently introduced in~\cite{P} and~\cite{Se}.
This class is also referred in \cite{A,BB,KS,LCG,MKS,S1,S,SK,SLS,ST}. Although the details of the definitions in these papers are different, the described class is essentially the same and can be defined as follows.
Denote by $B(x,r)$ the Euclidean open ball with center $x\in\R^n$ and radius $r$.
Fix a positive number $r$ and define $\Uscr_r$
as the set of all connected unions of balls $B(x,r')$ with radius $r'\geq r$. Note that, as any ball $B(x,r')$ with
radius $r'\geq r$ is itself a union of balls of radius $r$, any set in $\Uscr_r$
is a union of balls of radius $r$.

\begin{defi}
An open set $U \subseteq \R^n$ is said to be  $r$-regular if and only if $U \in \Uscr_r$ and $\overline{U}^c \in \Uscr_r$.
\end{defi}

\bigskip

Let $U \subseteq \R^n$ be an 
open set.

\begin{defi}
A normal vector field along $\partial U$ is any vector
field $\eta:\partial U\to\R^n$ such that for each $x\in\partial U$,
$$ \langle \eta(x),y-x\rangle =o(\nrm{x-y}) \; \text{ as } \; y\to x \; \text{ in } \; \partial U\;.$$
\end{defi}

\bigskip

If $\eta$ is a normal vector field, on a compact neighbourhood of any point $x\in\partial U$  there is a monotonic continuous function $\rho:\R^+_0\to\R^+_0$,
with $\rho(0)=0$, such that
$$ \langle \eta(x),y-x\rangle \leq \rho(\nrm{x-y})\,\nrm{x-y}, \; \forall\; x,y\in\partial U\;.$$

\bigskip

The {\em intrinsic metric} on $\partial U$ (see e.g.~\cite{BBI}), denoted by $d_{\partial U}$, is defined as follows:
given  $x,y \in \partial U$ and  $\epsilon>0$, let
$$d_\epsilon(x,y)=\inf_{\begin{array}{c}
                 x_0=x,\, x_n=y,\,\\
                x_i\in\partial U, \, \nrm{ x_{i+1}- x_i} < \epsilon
                  \end{array}} \sum_{i=0}^{n-1}
                  \nrm{ x_{i+1}- x_i}\; . $$
Now define
$$d_{\partial U}(x,y)=\sup_{\epsilon>0} d_{\epsilon}(x,y).$$

\bigskip

The aim for the rest of this paper is the proof of the following characterization of $r$-regularity:

\begin{teor}\label{main} Let $U\subseteq\R^n$ be an 
open set.
Then $U$ is $r$-regular\, if and only if 
\begin{enumerate}
\item[$(1)$]\, there is $\eta:\partial U\to\R^n$
such that
\begin{enumerate}
\item[$\mbox{\rm (i)}$]\, $\eta$ is a normal vector field  along $\partial U$,
\item[$\mbox{\rm (ii)}$]\, $\nrm{\eta(x)}=r$, for every $x\in\partial U$,
\item[$\mbox{\rm (iii)}$]\, ${\rm Lip}(\eta)\leq 1$,
\end{enumerate}
\item[$(2)$]\, $d_{\partial U}(x,y) \geq \frac{\pi}{2}\, \nrm{x-y}$ implies $\nrm{x-y} \geq 2\,r$, for all $x,y \in \partial U$.
\end{enumerate}
\end{teor}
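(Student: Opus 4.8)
The plan is to prove the two implications separately, routing the geometry of both through a single quadratic estimate tying $\eta$ to the osculating balls $B(x\pm\eta(x),r)$. \emph{For the direction $r$-regular $\Rightarrow$ (1),} assume $U\in\Uscr_r$ and $\overline{U}^c\in\Uscr_r$. For each $x\in\partial U$ I would first extract an inner tangent ball $B(a,r)\subseteq U$ and an outer tangent ball $B(b,r)\subseteq\overline{U}^c$ with $\nrm{x-a}=\nrm{x-b}=r$, as limits of radius-$r$ balls whose interiors approach $x$. Since the two open balls are disjoint while $\nrm{a-x}=\nrm{x-b}=r$, the triangle inequality forces $a,x,b$ collinear with $x$ the midpoint, so $\eta(x):=a-x$ is well defined with $\nrm{\eta(x)}=r$, which is (ii). As $y\in\partial U$ lies in neither open ball, expanding $\nrm{y-x\mp\eta(x)}\ge r$ with $\nrm{\eta(x)}=r$ gives $|\langle\eta(x),y-x\rangle|\le\tfrac12\nrm{x-y}^2=o(\nrm{x-y})$, which is (i). For (iii) I would pair the inner ball at $y$ with the outer ball at $x$, and vice versa; disjointness yields $\nrm{(y-x)\pm(\eta(x)+\eta(y))}\ge 2r$, and adding the two squared inequalities collapses, using $\nrm{\eta(x)}=\nrm{\eta(y)}=r$, to $\nrm{\eta(x)-\eta(y)}\le\nrm{x-y}$, i.e.\ $\Lip(\eta)\le1$. \emph{For $r$-regular $\Rightarrow$ (2)} I would prove the contrapositive: if $\nrm{x-y}<2r$ then $d_{\partial U}(x,y)<\tfrac\pi2\nrm{x-y}$; using that an $r$-regular boundary is a $C^{1+\Lip}$ hypersurface with normal curvatures bounded by $1/r$, one exhibits a path in $\partial U$ of length at most the comparison arc $2r\arcsin(\nrm{x-y}/2r)$, and convexity of $\arcsin$ on $[0,1]$ gives $2r\arcsin(\nrm{x-y}/2r)\le\tfrac\pi2\nrm{x-y}$, strict for $\nrm{x-y}<2r$.

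\emph{For the converse, (1) and (2) $\Rightarrow$ $r$-regular,} orient $\eta$ inward and set $c(x)=x+\eta(x)$. Everything reduces to the \textbf{key estimate} $\langle\eta(x),z-x\rangle\le\tfrac12\nrm{z-x}^2$ for all $x,z\in\partial U$ (with the symmetric lower bound on the outer side), because this says exactly that $B(c(x),r)$ meets $\partial U$ nowhere; being connected and meeting $U$ near $x$, it then satisfies $B(c(x),r)\subseteq U$, and the outer balls $B(x-\eta(x),r)$ land in $\overline{U}^c$ likewise. Granting the estimate, $U$ is covered by the inner balls $B(c(x),r)$ together with the balls $B(p,r)$ about points $p$ with $\mathrm{dist}(p,\partial U)\ge r$, exhibiting $U$ as a union of radius-$r$ balls, and symmetrically for $\overline{U}^c$; this is $r$-regularity.

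It remains to prove the key estimate. Suppose $z\in\partial U\cap B(c(x),r)$. Then $\langle\eta(x),z-x\rangle>\tfrac12\nrm{z-x}^2$, which together with Cauchy--Schwarz forces $\nrm{z-x}<2r$. If $d_{\partial U}(x,z)\ge\tfrac\pi2\nrm{x-z}$, then (2) would give $\nrm{x-z}\ge 2r$, a contradiction; hence $d_{\partial U}(x,z)<\tfrac\pi2\nrm{x-z}<\pi r$, so a length-minimizing path $\gamma$ from $x$ to $z$ in $\partial U$ has length $L<\pi r$. Parametrizing $\gamma$ by arc length, (i) gives $\gamma'(s)\perp\eta(\gamma(s))$ almost everywhere and (iii) bounds the normal curvature of $\partial U$ by $1/r$; for $R(s)=\nrm{\gamma(s)-c(x)}$ one then has $R(0)=r$, $R'(0)=0$, and a Rauch-type second-order differential inequality whose extremal is the great circle of the sphere $\partial B(c(x),r)$, along which $R\equiv r$. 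The threshold for this comparison is precisely the first conjugate length $\pi r$, so the bound $L<\pi r$ secured from (2) is exactly what guarantees $R(s)\ge r$ throughout, whence $z\notin B(c(x),r)$ --- the desired contradiction.

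\emph{The main obstacle} is making this last comparison rigorous, and the two delicate points I expect to fight with are the following. First, one must upgrade $\partial U$ to a $C^{1+\Lip}$ manifold from (1) alone: the bound $\Lip(\eta)\le1$ forces injectivity of the tubular map $x\mapsto x+t\,\eta(x)/r$ for $|t|<r$, so the signed distance to $\partial U$ is $C^{1+\Lip}$ with nonvanishing gradient and Proposition~\ref{pre-image-C1lip} applies, giving geodesics and a (a.e.-defined) second fundamental form. Second, the second-order comparison must be run when $\eta$ is merely Lipschitz, so that curvature exists only almost everywhere (Rademacher, Theorem~\ref{Rademacher}); pinning the exact constant $\pi/2$, equivalently the conjugate length $\pi r$, hinges on controlling these almost-everywhere derivatives and ruling out that $\gamma$ dips inside $B(c(x),r)$ before length $\pi r$.
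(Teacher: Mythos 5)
Your construction of $\eta$ from the inner and outer tangent balls, and your derivation of (i)--(iii) from ball disjointness, are correct and are exactly what the paper imports from its predecessor \cite{DT2} (Proposition~\ref{localmain}). Your converse direction also follows the paper's route: the paper builds the $C^{1+{\rm Lip}}$ structure on $\partial U$ from $\eta$ alone (via $f(x)=\langle x-\pi(x),\eta(\pi(x))\rangle$, Propositions~\ref{fC1} and~\ref{pre-image-C1lip}), produces minimizing geodesics with $\Lip(\gamma')\le 1/r$ (Propositions~\ref{geodesic:char} and~\ref{geo:lip}), and your ``Rauch-type second-order differential inequality'' with conjugate length $\pi r$ is precisely the paper's Sturm--Liouville Lemma~\ref{s1}, run on $\nrm{\gamma(t)-c(x)}^2$ with only a.e.\ second derivatives. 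The two ``delicate points'' you flag are real, but they are resolved in the paper exactly as you anticipate, so that half of your plan is sound (though, as a proposal, it leaves those lemmas unproven).

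The genuine gap is in the forward implication of condition (2). You claim that for $\nrm{x-y}<2r$ ``one exhibits a path in $\partial U$ of length at most the comparison arc $2r\arcsin(\nrm{x-y}/2r)$'' using only that $\partial U$ is a $C^{1+\Lip}$ hypersurface with normal curvature bounded by $1/r$. The curvature bound cannot do this: it yields only \emph{lower} bounds on chord length along a given curve (a Schur-type estimate valid up to arc length $\pi r$), and applying it to the minimizing geodesic is circular, since one must already know $d_{\partial U}(x,y)\le\pi r$ before the comparison says anything. A long thin dumbbell has everywhere small normal curvature yet has Euclidean-close, geodesically-far boundary points; what excludes this is the global ball condition, not the curvature. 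This step is where the paper does its real work: it shows the nearest-point projection makes $\partial U\cap B$ connected for the closed ball $B$ with diameter $[x,y]$, uses this together with a ``Six Ball Lemma'' (disjointness of three pairs of tangent $r$-balls) to locate a boundary point $z$ on the bisecting hyperplane with $\nrm{z-\tfrac{x+y}{2}}\le r-\sqrt{r^2-\nrm{x-y}^2/4}$, and iterates this bisection to get polygonal curves in $\partial U$ of length $2^n\psi^n(\nrm{x-y})\to 2r\arctan\bigl(\nrm{x-y}/\sqrt{4r^2-\nrm{x-y}^2}\bigr)<\tfrac{\pi}{2}\nrm{x-y}$. You would need to supply an argument of this global character; nothing in your sketch produces the short path.
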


\section{Local Characterization of Regularity}
\label{sec:3}
In this section we
prove that an $r$-regular set
admits a Lipschitz normal vector field along its boundary (see Proposition~\ref{localmain}) .
We also prove that the existence of a Lipschitz normal vector field along $\partial U$ ensures that $U$ is locally $r$-regular (see Proposition~\ref{localnormal}) .

\bigskip

\begin{prop}\label{localmain} Let $U\subseteq\R^n$ be an $r$-regular  set.
Then  there is $\eta:\partial U\to\R^n$
such that
\begin{enumerate}
\item[$\mbox{\rm (i)}$]\, $\eta$ is a normal vector field  along $\partial U$,
\item[$\mbox{\rm (ii)}$]\, $\nrm{\eta(x)}=r$, for every $x\in\partial U$,
\item[$\mbox{\rm (iii)}$]\, ${\rm Lip}(\eta)\leq 1$.
\end{enumerate}
\end{prop}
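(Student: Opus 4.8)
The plan is to build $\eta$ from the two radius-$r$ balls that touch $\partial U$ at each boundary point, one lying inside $U$ and one inside $\overline{U}^c$, and then to read off all three properties from elementary distance inequalities between these balls. Fix $x\in\partial U$. Since $U\in\Uscr_r$ is a union of balls of radius $r$ and $x\in\overline{U}$, I would pick $x_k\to x$ with $x_k\in B(c_k,r)\subseteq U$; the centers $c_k$ are bounded, so along a subsequence $c_k\to c_{\rm in}$, and a short limiting argument gives $B(c_{\rm in},r)\subseteq U$ with $\nrm{x-c_{\rm in}}\leq r$. As $x\notin U$, in fact $\nrm{x-c_{\rm in}}=r$, so there is an inner tangent ball $B(c_{\rm in},r)\subseteq U$ touching $x$. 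Running the identical argument with $\overline{U}^c\in\Uscr_r$ in place of $U$ produces an outer tangent ball $B(c_{\rm out},r)\subseteq\overline{U}^c$ touching $x$. I then set $\eta(x)=c_{\rm out}-x$, so that (ii) holds. Because $B(c_{\rm in},r)$ and $B(c_{\rm out},r)$ are disjoint open balls of radius $r$ sharing the boundary point $x$, we get $\nrm{c_{\rm in}-c_{\rm out}}\geq 2r$, while the triangle inequality gives $\nrm{c_{\rm in}-c_{\rm out}}\leq 2r$; hence $x$ is the midpoint of the segment from $c_{\rm in}$ to $c_{\rm out}$, forcing $c_{\rm out}-x=x-c_{\rm in}$ and making the common value $\eta(x)$ independent of the chosen pair of balls.

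The step I expect to be delicate is the existence of the outer ball. The limiting argument above requires $x$ to be a limit of points of $\overline{U}^c$, i.e.\ the inclusion $\partial U\subseteq\partial(\overline{U}^c)$. The reverse inclusion is general topology, but ruling out a boundary point $x$ that is \emph{buried} in $\overline{U}$ (so that some $B(x,\delta)$ meets $\overline{U}^c$ only in $\partial U$) is the genuine content, and I would have to extract it from the union-of-balls structure of $\overline{U}^c$. This boundary coincidence is the main obstacle; everything else is a short computation.

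For (i), fix $x,y\in\partial U$. Since $y\notin B(c_{\rm out},r)=B(x+\eta(x),r)$ we have $\nrm{y-x-\eta(x)}^2\geq r^2=\nrm{\eta(x)}^2$, which expands to $\langle\eta(x),y-x\rangle\leq\tfrac12\nrm{x-y}^2$. Using instead that $y\notin B(c_{\rm in},r)=B(x-\eta(x),r)$ gives $\nrm{y-x+\eta(x)}^2\geq\nrm{\eta(x)}^2$, hence $\langle\eta(x),y-x\rangle\geq-\tfrac12\nrm{x-y}^2$. Therefore $\Mod{\langle\eta(x),y-x\rangle}\leq\tfrac12\nrm{x-y}^2=o(\nrm{x-y})$ as $y\to x$, so $\eta$ is a normal vector field and (i) holds.

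Finally, for (iii) I would cross the balls at two points $x,y\in\partial U$. The inner ball at $x$ and the outer ball at $y$ are disjoint, so $\nrm{(x-\eta(x))-(y+\eta(y))}\geq 2r$; symmetrically the outer ball at $x$ and the inner ball at $y$ are disjoint, so $\nrm{(x+\eta(x))-(y-\eta(y))}\geq 2r$. Squaring both and adding cancels the cross terms and yields $\nrm{x-y}^2+\nrm{\eta(x)+\eta(y)}^2\geq 4r^2$. Since $\nrm{\eta(x)}=\nrm{\eta(y)}=r$, the parallelogram law gives $\nrm{\eta(x)+\eta(y)}^2=4r^2-\nrm{\eta(x)-\eta(y)}^2$, and substituting leaves $\nrm{\eta(x)-\eta(y)}\leq\nrm{x-y}$, that is ${\rm Lip}(\eta)\leq 1$, which is (iii).
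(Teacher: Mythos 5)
Your overall strategy---produce at each $x\in\partial U$ an inscribed $r$-ball $B(c_{\rm in},r)\subseteq U$ and an outer $r$-ball $B(c_{\rm out},r)\subseteq \overline{U}^{\,c}$ tangent at $x$, set $\eta(x)=c_{\rm out}-x$, and read off (i)--(iii) from disjointness of balls---is precisely the content of the results the paper invokes: its own proof is a citation of \cite[Proposition 5]{DT2} for (i)--(ii) and of \cite[Lemma 3(1)]{DT2} for (iii), so you are in effect reproving those. The parts you actually carry out are correct: the compactness argument for the inner ball, the midpoint argument forcing $c_{\rm out}-x=x-c_{\rm in}$ (well-definedness follows by mixing the inner ball of one admissible pair with the outer ball of another), the two expansions giving $\Mod{\langle\eta(x),y-x\rangle}\leq\tfrac12\nrm{x-y}^2$, and the parallelogram-law derivation of ${\rm Lip}(\eta)\leq 1$ are all sound and quite clean.

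There is, however, a genuine gap, which you name yourself but do not close: the existence of the outer tangent ball. Your limiting argument applied to $\overline{U}^{\,c}\in\Uscr_r$ needs $x$ to be an accumulation point of $\overline{U}^{\,c}$, i.e.\ $\partial U\subseteq\overline{\overline{U}^{\,c}}$, and this is exactly the nontrivial topological input; it does not follow formally from membership in $\Uscr_r$ alone. For instance, $U=B(0,1)\setminus\{0\}\subseteq\R^2$ is the connected union of the balls $B(c,\tfrac12)$ with $\nrm{c}=\tfrac12$, and $\overline{U}^{\,c}=\{\nrm{z}>1\}$ is likewise a connected union of balls of radius $\tfrac12$, yet the boundary point $0$ admits no outer tangent ball (the proposition survives there only because $0$ is isolated in $\partial U$, so normality is vacuous; your construction of $\eta(0)$ nonetheless breaks down). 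Ruling out such buried boundary points---showing that every point of $\partial U$ is approached both from $U$ and from $\overline{U}^{\,c}$ under the intended reading of $r$-regularity---is where the real work lies, and it is precisely what is established in \cite{DT2}. Until you supply that argument (or an explicit hypothesis excluding this degeneracy), the proof is incomplete at its central step.
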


\begin{proof}
Since $U$ is $r$-regular, items $\mbox{\rm (i)}$, $\mbox{\rm (ii)}$ and $\mbox{\rm (iii)}$ follow from the previous paper~\cite{DT2}: items $\mbox{\rm (i)}$ and $\mbox{\rm (ii)}$
follow from \cite[Proposition 5]{DT2} and item $\mbox{\rm (iii)}$ follows from \cite[Lemma 3(1)]{DT2} with $v=\eta(x)$, $w=\eta(y)$ and $u=y-x$.
\end{proof}

\begin{defi}
An open set $U \subseteq \R^n$ is said to be locally $r$-regular if and only if
for every $x \in \partial U$  there are two balls $B(x_1,r)$ and $B(x_2,r)$
tangent at $x$ such that $B(x_1,r)\cap B(x,\varepsilon)\subseteq U$ and
$B(x_2,r)\cap B(x,\varepsilon)\cap \overline{U}=\emptyset$, for some $\varepsilon>0$.
\end{defi}

\bigskip

\begin{prop}\label{localnormal} Let $U\subseteq \R^n$ be an  open set and let  $\eta:\partial U\to\R^n$
be such that
\begin{enumerate}
\item[$\mbox{\rm (i)}$]\, $\eta$ is a normal vector field  along $\partial U$,
\item[$\mbox{\rm (ii)}$]\, $\nrm{\eta(x)}=r$, for every $x\in\partial U$,
\item[$\mbox{\rm (iii)}$]\, ${\rm Lip}(\eta)\leq 1$.
\end{enumerate}
Then $U$ is locally $r$-regular.
\end{prop}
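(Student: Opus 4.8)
The plan is to read off the two tangent balls directly from the vector $\eta(x)$. Fix $x\in\partial U$, write $\nu(x)=\eta(x)/r$ (a unit vector by (ii)), and take as candidate centres $x_1=x+\eta(x)$ and $x_2=x-\eta(x)$. Since $\nrm{x_i-x}=r$, the balls $B(x_1,r)$ and $B(x_2,r)$ are tangent at $x$, so it remains to place them on the two sides of $\partial U$. For $y\in\partial U$ one computes
\[
\nrm{y-x_1}^2-r^2=\nrm{y-x}^2-2\langle\eta(x),y-x\rangle,\qquad
\nrm{y-x_2}^2-r^2=\nrm{y-x}^2+2\langle\eta(x),y-x\rangle .
\]
Thus the whole statement reduces to the single analytic estimate
\[
\Mod{\langle\eta(x),y-x\rangle}\le\tfrac12\,\nrm{y-x}^2,\qquad y\in\partial U\text{ near }x,
\]
which says precisely that neither \emph{open} ball meets $\partial U$ in a neighbourhood of $x$; once this holds, a connectedness argument decides which ball lies in $U$.

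The heart of the matter is therefore to upgrade the first–order normality (i) to this second–order bound, and for that the full strength of the Lipschitz condition (iii) must be used. First I would establish a local graph structure. For $y,y'\in\partial U$ close to $x$, writing $\langle\eta(x),y-y'\rangle=\langle\eta(y'),y-y'\rangle-\langle\eta(y')-\eta(x),y-y'\rangle$ and bounding the first term by normality at $y'$ and the second by $\mathrm{Lip}(\eta)\le1$ yields a cone inequality $\Mod{\langle\nu(x),y-y'\rangle}\le\tfrac12\nrm{y-y'}$ on a small ball $B(x,\varepsilon)$. Hence, over the tangent plane $H=\nu(x)^\perp$, the set $\partial U$ is locally a Lipschitz graph $y(w)=w+g(w)\,\nu(x)$ with $g(0)=0$, and normality at $x$ forces $g(w)=o(\nrm{w})$, i.e. $\nabla g(0)=0$.

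Next I would extract the curvature bound. By Rademacher's theorem (Theorem~\ref{Rademacher}) the Lipschitz function $g$ is differentiable almost everywhere, and at each point of differentiability normality of $\eta$ identifies the unit normal as $\nu(y(w))=(1+\nrm{\nabla g(w)}^2)^{-1/2}\bigl(\nu(x)-\nabla g(w)\bigr)$. Substituting this into $\mathrm{Lip}(\eta)\le1$ gives a pointwise (rolling–ball) differential inequality for the second fundamental form of the graph, which after integration from the tangent point yields the sharp profile bound $\Mod{g(w)}\le r-\sqrt{r^2-\nrm{w}^2}$. Since $\langle\eta(x),y-x\rangle=r\,g(w)$ and $\nrm{y-x}^2=\nrm{w}^2+g(w)^2$, this is exactly the displayed estimate, so both open balls avoid $\partial U$ near $x$.

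Finally I would conclude by connectedness: each set $B(x_i,r)\cap B(x,\varepsilon)$ is convex, hence connected, and meets no point of $\partial U$, so it is contained either in $U$ or in $\overline{U}^c$. Points of $B(x_1,r)$ satisfy $\langle\nu(x),\cdot-x\rangle>0$ and those of $B(x_2,r)$ satisfy $\langle\nu(x),\cdot-x\rangle<0$, while $\partial U$ lies between the two spheres; the balls therefore fall on opposite sides, so (relabelling $x_1,x_2$ if necessary) one lies in $U$ and the other in $\overline{U}^c$, which is local $r$-regularity. The hard part is the upgrade in the third paragraph: bare normality permits boundary points inside arbitrarily small tangent balls, so the constant $r$ is obtained only by exploiting $\mathrm{Lip}(\eta)\le1$ through the a.e.\ curvature estimate, and the sharpness of the radius (rather than some $r-\delta$) is what makes the contact/integration step delicate, especially at points where $g$ fails to be differentiable.
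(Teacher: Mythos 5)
Your overall strategy --- represent $\partial U$ near $x$ as a Lipschitz graph over the tangent hyperplane, use Rademacher to differentiate the graph function $g$, convert ${\rm Lip}(\eta)\leq 1$ into an a.e.\ curvature bound, and integrate to get the profile estimate $\Mod{g(w)}\leq r-\sqrt{r^2-\nrm{w}^2}$ --- is genuinely different from the paper's, which never builds a graph: it constructs the nearest-point projection $\pi$ onto $\partial U$ (well defined and Lipschitz on $N_{sr}$ precisely because ${\rm Lip}(\eta)\leq 1$, Lemmas~\ref{projection} and~\ref{lipschitz}), shows the function $f(z)=\langle z-\pi(z),\eta(\pi(z))\rangle$ is $C^{1+{\rm Lip}}$ with $Df_z=\langle\cdot\,,\eta(\pi(z))\rangle$ (Proposition~\ref{fC1}), and then a one-line Taylor estimate gives $\Mod{f(x+v)-\langle v,\eta(x)\rangle}\leq \frac{C_s}{2}\nrm{v}^2$ with $C_s=1/\sqrt{1-2s}$, so that $\partial U=f^{-1}(0)$ avoids the two tangent balls of radius $r/C_s$. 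Your reduction to the inequality $\Mod{\langle\eta(x),y-x\rangle}\leq\frac12\nrm{y-x}^2$ and your cone inequality are both correct, and note that you are aiming at something \emph{sharper} than the paper's local argument actually delivers (radius exactly $r$ versus $r/C_s$ for every $s$).

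However, as written the argument has two genuine gaps. First, the cone inequality only shows that the orthogonal projection $P_H$ is injective with Lipschitz inverse on $\partial U\cap B(x,\varepsilon)$; it does not show that the image $W=P_H(\partial U\cap B(x,\varepsilon))$ contains a neighbourhood of $0$ in $H$. Without that, ``$\partial U$ is locally a graph $y(w)=w+g(w)\nu(x)$'' is only a graph over an unknown set $W$, and everything downstream that needs an open domain --- identifying $\eta(y(w))$ with the graph normal at points of differentiability (which requires tangent directions spanning $H$), defining $\nabla g$, and especially ``integration from the tangent point'' along a segment from $0$ to $w$ that must stay inside $W$ --- is unsupported. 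Proving $W$ is a neighbourhood of $0$ essentially requires knowing that $U$ and its exterior sit on the two sides of the cone, which is close to what you are trying to prove; the paper sidesteps this entirely by working with $f$ on the full tubular neighbourhood. Second, the step from ${\rm Lip}(\eta)\leq 1$ to the profile bound is a large leap stated in one sentence: you must first upgrade $g$ from Lipschitz to $C^{1+{\rm Lip}}$ (so that second derivatives exist a.e.), derive the correct pointwise Hessian bound (which carries the factor $(1+\nrm{\nabla g}^2)^{3/2}$, not just $1/r$), and then run a Riccati/Sturm-type comparison against $u\mapsto r-\sqrt{r^2-u^2}$, whose derivative blows up at the endpoint --- this is comparable in length and delicacy to the paper's entire Sections~\ref{sec:5}--\ref{sec:6}, where an analogous comparison is carried out for geodesics. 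Until these two points are filled in, the proof is a plan rather than a proof; the paper's route via $\pi$ and $f$ avoids both difficulties at the cost of only reaching radius $r/C_s$ locally.
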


The rest of this section is dedicated to the proof of Proposition~\ref{localnormal}.
We assume that $\eta:\partial U\to\R^n$ is a  normal vector field  along $\partial U$
such that $\nrm{\eta(x)}=r$, for every $x\in\partial U$,
and  ${\rm Lip}(\eta)\leq 1$.
First we shall prove some auxiliary lemmas.

\bigskip




\begin{lema}\label{quadrado} Given $x\in\partial U$ we have that
$$\langle \eta(x), \eta(y)-\eta(x)\rangle = o(\nrm{x-y}) \; \text{ as } \; y\to x \; \text{ in } \; \partial U\;.$$
Moreover, given $t\in (-1/2,1/2)$, we have that
 $$\nrm{x+t\,\eta(x)-(y+t\,\eta(y))}\geq \sqrt{1-2|t|} \,\nrm{x-y},$$
 for any $x,y \in \partial U$.
\end{lema}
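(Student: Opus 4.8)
The plan is to reduce both assertions to two elementary inputs available from the standing hypotheses: the constant length $\nrm{\eta(x)}=\nrm{\eta(y)}=r$, and the contraction estimate $\nrm{\eta(x)-\eta(y)}\leq\nrm{x-y}$ coming from $\mathrm{Lip}(\eta)\leq 1$. Neither part in fact needs the normal field property in an essential way; everything flows from polarization together with the Cauchy--Schwarz inequality, and for the second part constancy of the length is not even used.

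For the first identity I would start from the polarization of $\nrm{\eta(y)-\eta(x)}^2$. Since both vectors have length $r$, expanding gives $\nrm{\eta(y)-\eta(x)}^2 = 2r^2 - 2\langle\eta(x),\eta(y)\rangle$, whence $\langle\eta(x),\eta(y)-\eta(x)\rangle = \langle\eta(x),\eta(y)\rangle - r^2 = -\tfrac12\,\nrm{\eta(y)-\eta(x)}^2$. This exact equality already does the work: applying $\mathrm{Lip}(\eta)\leq 1$ yields $|\langle\eta(x),\eta(y)-\eta(x)\rangle| \leq \tfrac12\,\nrm{x-y}^2$, and dividing by $\nrm{x-y}$ shows the bound tends to $0$ as $y\to x$, which is precisely the statement that the quantity is $o(\nrm{x-y})$.

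For the displacement estimate I would set $u = x-y$ and $w = \eta(x)-\eta(y)$, so that the vector in question is exactly $u + t\,w$. Expanding the square gives $\nrm{u+t\,w}^2 = \nrm{u}^2 + 2t\,\langle u,w\rangle + t^2\,\nrm{w}^2$. By Cauchy--Schwarz and the Lipschitz bound, $|\langle u,w\rangle| \leq \nrm{u}\,\nrm{w} \leq \nrm{u}^2$, while $t^2\,\nrm{w}^2 \geq 0$. Discarding the nonnegative quadratic term and bounding the cross term uniformly in the sign of $t$ then gives $\nrm{u+t\,w}^2 \geq \nrm{u}^2 - 2|t|\,\nrm{u}^2 = (1-2|t|)\,\nrm{u}^2$. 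Because $|t| < 1/2$ the factor $1-2|t|$ is positive, so taking square roots produces the stated inequality.

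I do not expect a genuine obstacle here. The one point that needs a little care is treating $t>0$ and $t<0$ uniformly, which is handled cleanly by bounding the cross term through $|\langle u,w\rangle|$ and writing its coefficient as $-2|t|$. The only real \emph{idea} is to discard the term $t^2\,\nrm{w}^2 \geq 0$ rather than trying to exploit it, which is legitimate precisely because the range $|t|<1/2$ keeps the resulting constant $1-2|t|$ strictly positive.
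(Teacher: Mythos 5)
Your proposal is correct and follows essentially the same route as the paper: both parts rest on the polarization identity $\langle \eta(x),\eta(x)-\eta(y)\rangle=\tfrac12\nrm{\eta(x)-\eta(y)}^2$ (using the constant norm) combined with $\mathrm{Lip}(\eta)\leq 1$, and the second part discards the nonnegative term $t^2\nrm{\eta(x)-\eta(y)}^2$ and bounds the cross term by Cauchy--Schwarz exactly as in the paper's proof.
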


\begin{proof}
Since
\begin{align*}
&\nrm{\eta(x) -\eta(y)}^2  =  \langle \eta(x)-\eta(y), \eta(x)-\eta(y) \rangle\\
& \hspace{0.4cm} = 2\,\langle \eta(x), \eta(x)-\eta(y) \rangle - \langle \eta(x)+\eta(y), \eta(x)-\eta(y) \rangle\\
& \hspace{0.4cm}  = 2\,\langle \eta(x), \eta(x)-\eta(y) \rangle - \underbrace{(\nrm{\eta(x)}^2-\nrm{\eta(y)}^2 )}_{=0}
\end{align*}
we get that
$$ \langle \eta(x), \eta(x)-\eta(y) \rangle =\frac{1}{2}\, \nrm{\eta(x)-\eta(y)}^2 \leq \frac{1}{2}\, \nrm{x-y}^2,$$
where the last inequality follows because ${\rm Lip}(\eta)\leq 1$. Therefore,
$$\langle \eta(x), \eta(y)-\eta(x)\rangle = o(\nrm{x-y}) \; (y\to x).$$

Finally, given $t\in (-1/2,1/2)$, we have that
\begin{align*}
&\nrm{x+t\,\eta(x)-y-t\,\eta(y)}^ 2  = \\
& \hspace{0.4cm} = \nrm{t\,\eta(x)-t\,\eta(y)}^2 +\nrm{x-y}^ 2 \\
&  \hspace{0.8cm} + 2\,\langle x-y,t\,\eta(x)-t\,\eta(y) \rangle \\
& \hspace{0.4cm} \geq  \nrm{x-y}^2 - 2 |t|\, \nrm{x-y}\nrm{\eta(x)-\eta(y)} \\
& \hspace{0.4cm} \geq  \nrm{x-y}^2 - 2 |t|\, \nrm{x-y}^2 \\
& \hspace{0.4cm} =  \nrm{x-y}^2 \, \left(1 - 2\, |t| \right),
\end{align*}
which implies that
\begin{align*}
\nrm{x+t\,\eta(x)-y-t\,\eta(y)} \geq \sqrt{1 - 2\, |t|} \, \nrm{x-y},
\end{align*}
 and completes the proof.
\end{proof}

\bigskip

It follows that on a compact neighbourhood of every point $x\in\partial U$  
there is a monotonic continuous function $\tilde{\rho}:\R^+_0\to\R^+_0$,
with $\rho(0)=0$, such that
$$ \langle \eta(x),\eta(y)-\eta(x)\rangle \leq \tilde{\rho}(\nrm{x-y})\,\nrm{x-y}, \; \forall\; x,y\in\partial U\;.$$

\bigskip

For each $\delta>0$ we define the
 {\em $\delta$-tubular neighbourhood} of $\partial U$,
$N_\delta = \{\, x\in\R^n\,:\, d(x,\partial U)<\delta\,\}$.

\bigskip

\begin{figure}[h]
\begin{center}
\includegraphics[width=0.5\textwidth]{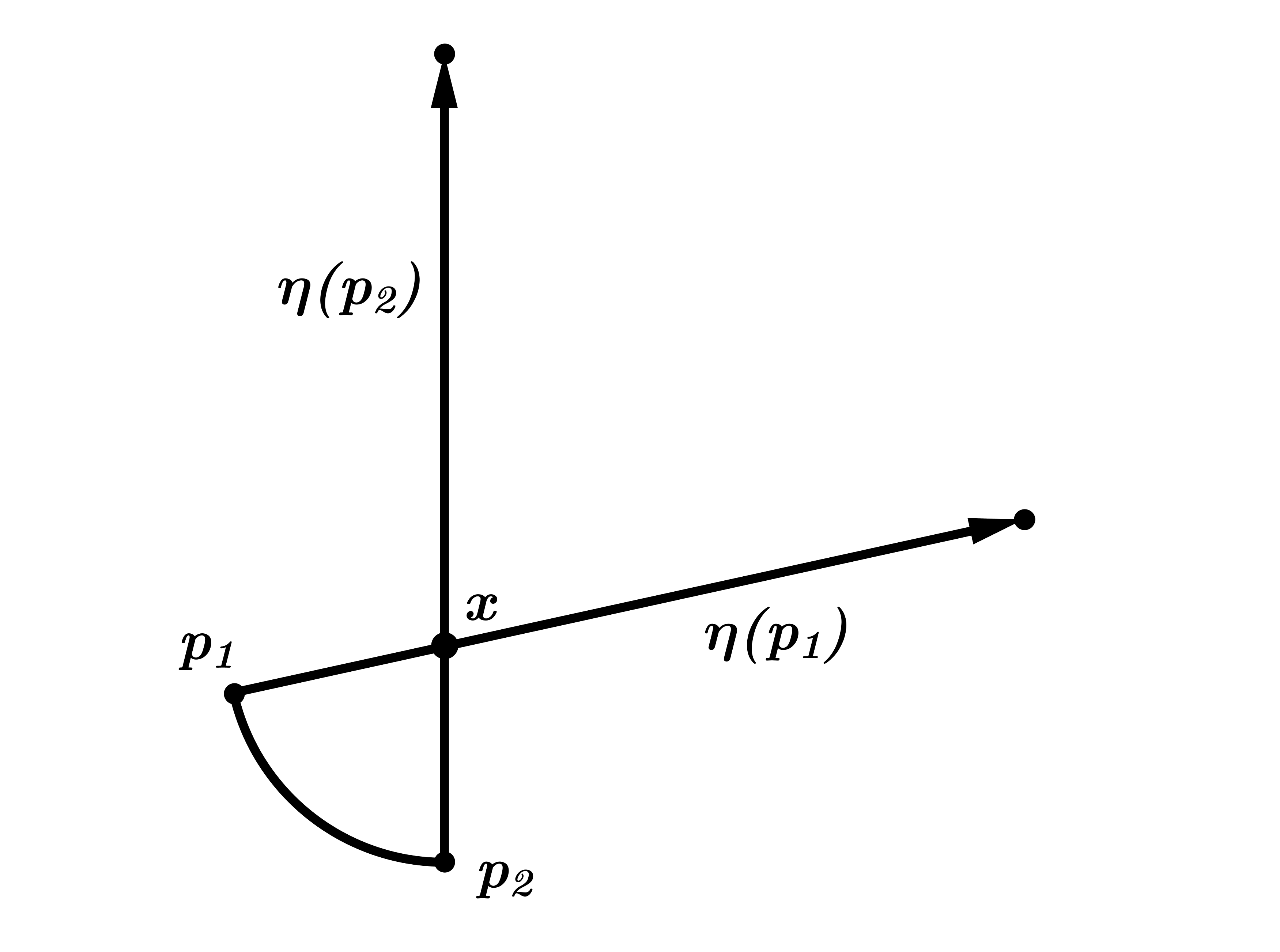}
\end{center}
\caption{Uniqueness of the projection}
\label{fig:1}
\end{figure}

\begin{lema}\label{projection}
Given $x\in N_{r/2}(\partial U)$ there is a unique point $p \in\partial U$
such that $\nrm{x-p}=d(x,\partial U)$.
\end{lema}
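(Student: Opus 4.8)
The plan is to obtain existence from compactness and uniqueness by forcing the minimizing direction to coincide with the normal field $\eta$, after which the bound ${\rm Lip}(\eta)\leq 1$ closes the argument. First I would fix $x\in N_{r/2}(\partial U)$, so that $s:=d(x,\partial U)<r/2$. Since $\partial U$ is closed, the set $K:=\partial U\cap\overline{B}(x,r/2)$ is compact and nonempty, and the continuous map $y\mapsto\nrm{x-y}$ attains its minimum on $K$ at some $p$, with $\nrm{x-p}=s$. As every point of $\partial U$ lying outside $\overline{B}(x,r/2)$ is at distance $>r/2>s$ from $x$, this $p$ realizes the distance to all of $\partial U$, giving existence.

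For uniqueness I would suppose that $p,q\in\partial U$ both satisfy $\nrm{x-p}=\nrm{x-q}=s$ and aim for $p=q$. The elementary minimizing inequality comes from expanding $\nrm{x-y}^2\geq\nrm{x-p}^2$: for every $y\in\partial U$ one gets $\langle x-p,y-p\rangle\leq\frac12\nrm{y-p}^2$. Next I note that the open ball $B(x,s)$ meets no point of $\partial U$, so, being connected, it lies entirely in $U$ or entirely in $\overline{U}^c$; by the symmetry of the hypotheses under $U\leftrightarrow\overline{U}^c$ I may assume $B(x,s)\subseteq U$, an interior ball tangent to $\partial U$ at both $p$ and $q$.

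The crucial step is to show that the minimizing direction is normal, i.e. that $x-p=t\,\eta(p)$ and $x-q=t\,\eta(q)$ with the \emph{same} $t$, $|t|=s/r<\frac12$. Dividing the minimizing inequality by $s\,\nrm{y-p}$ and letting $y\to p$ in $\partial U$ shows that the unit vector $(x-p)/s$ pairs nonpositively with every tangent direction of $\partial U$ at $p$, while the normal-field condition forces every such tangent direction to lie in $\eta(p)^\perp$. Because $\partial U$ is the boundary of the open set $U$ carrying the interior ball $B(x,s)$, its tangent cone at $p$ spans the whole hyperplane $\eta(p)^\perp$, which rules out a nonzero tangential component and pins $x-p$ to the normal line through $p$; the same holds at $q$, and the single ball $B(x,s)\subseteq U$ fixes the side into which the continuous field $\eta$ points at both points, so the signed multiples agree. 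Granting this, the conclusion is immediate: from $x=p+t\,\eta(p)=q+t\,\eta(q)$ we obtain $p-q=t\,(\eta(q)-\eta(p))$, whence
\[
\nrm{p-q}=|t|\,\nrm{\eta(p)-\eta(q)}\leq |t|\,\nrm{p-q}=\tfrac{s}{r}\,\nrm{p-q}<\nrm{p-q},
\]
using ${\rm Lip}(\eta)\leq 1$ and $s<r/2$; this forces $\nrm{p-q}=0$, i.e. $p=q$. (Equivalently one may invoke the displacement inequality of Lemma~\ref{quadrado} with this common $t$, whose factor $\sqrt{1-2|t|}$ is positive.)

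The main obstacle is exactly this crucial step: extracting from the abstract hypotheses that $x-p$ is genuinely normal and that $\eta$ keeps a consistent orientation at $p$ and $q$. Both rest on the local shape of $\partial U$—that its tangent cone fills $\eta(p)^\perp$ and that $\partial U$ is truly two-sided near each point—information that must be squeezed out of the interior-ball first-order estimate together with the normal-field condition rather than assumed. I expect this to be the technical heart of the lemma, with the remaining steps being routine.
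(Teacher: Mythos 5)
Your argument is essentially the paper's: the paper's entire proof consists of the identity $\eta(p_i)=\frac{r}{d(x,\partial U)}(x-p_i)$ (asserted by appeal to Fig.~\ref{fig:1}) followed by $\nrm{\eta(p_1)-\eta(p_2)}=\frac{r}{d(x,\partial U)}\nrm{p_1-p_2}\geq 2\nrm{p_1-p_2}$, contradicting ${\rm Lip}(\eta)\leq 1$ --- which is exactly your final display with $t=d(x,\partial U)/r$. The ``crucial step'' you isolate (that the minimizing segment is normal, with a consistent sign of $t$ at $p$ and $q$) is precisely what the paper takes for granted without argument, so while your tangent-cone justification of it is itself only a sketch, your write-up is, if anything, more explicit than the paper's on the one genuinely delicate point.
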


\begin{proof}
Assume there are two points $p_1\neq p_2$ in $\partial U$
such that $\nrm{x-p_1}=\nrm{x-p_2}=d(x,\partial U)$, where $x\in N_{r/2}(\partial U)$.
Then since $\nrm{x-p_1}=\nrm{x-p_2}<r/2$, we have (see Fig.~\ref{fig:1}),
\begin{align*}
\nrm{\eta(p_1)-\eta(p_2)} &= \frac{r}{d(x,\partial U)} \nrm{(x-p_1)-(x-p_2)} \\
& =  \frac{r}{d(x,\partial U)} \nrm{p_1-p_2} \\
& \geq \frac{r}{r/2} \nrm{p_1-p_2} = 2\,\nrm{p_1-p_2}\, ,
\end{align*}
which contradicts
${\rm Lip}(\eta)\leq 1$, unless $p_1=p_2$.
\end{proof}

\bigskip

By the previous lemma we can define a projection $\pi:N_{r/2}(\partial U)\to \partial U$
such that $\nrm{x-\pi(x)}=d(x,\partial U)$, for every $x\in N_{r/2}(\partial U)$.

\bigskip

\begin{lema}\label{lipschitz}
Given any \, $0 < s  <  \frac{1}{2}$, \, we have that the mapping $\pi: N_{sr}(\partial U) \to \partial U$ is a Lipschitz projection with
${\rm Lip}(\pi)\leq \frac{1}{\sqrt{1-2 s}}$,
and for every $x\in\partial U$,
$$\pi^ {-1}(x)=\{\, x+t\,\eta(x)\,:\, t\in (-s,s)\,\}\;.$$
\end{lema}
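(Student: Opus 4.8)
The plan is to prove the two assertions of the lemma separately, reducing both to a single sharp estimate. Throughout I write $\Phi(x,t)=x+t\,\eta(x)$, and note that since $s<1/2$ the neighbourhood $N_{sr}(\partial U)$ is contained in $N_{r/2}(\partial U)$, so $\pi$ is well defined there by Lemma~\ref{projection}. I also intend to use the elementary \emph{segment property} of nearest-point projections: if $p=\pi(c)$ then $\pi(m)=p$ for every $m$ on the segment $[p,c]$, since a point strictly closer to $m$ than $p$ would, by the triangle inequality, be strictly closer to $c$ than $p$.

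For the fiber identity $\pi^{-1}(x)=\{x+t\,\eta(x):t\in(-s,s)\}$ I would argue by double inclusion. For ``$\subseteq$'', take $y\in N_{sr}(\partial U)$ with $\pi(y)=x$; then $y-x$ realizes $d(y,\partial U)$, hence is a geometric normal to $\partial U$ at $x$, and---exactly as is already used in the proof of Lemma~\ref{projection}---it must be a scalar multiple of $\eta(x)$; since $\nrm{y-x}<sr$ and $\nrm{\eta(x)}=r$, this yields $y=x+t\,\eta(x)$ with $|t|<s$. For ``$\supseteq$'', set $y=x+t\,\eta(x)$ with $|t|<s$ and verify $\pi(y)=x$, i.e. $\nrm{y-z}\ge\nrm{y-x}=|t|\,r$ for all $z\in\partial U$. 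Expanding,
\[
\nrm{y-z}^2-\nrm{y-x}^2=\nrm{x-z}^2-2t\,\langle\eta(x),z-x\rangle,
\]
so everything comes down to the quadratic estimate $|\langle\eta(x),z-x\rangle|\le\tfrac12\nrm{x-z}^2$, i.e. the statement that the tangent ball $B(x+\eta(x),r)$ and its mirror on the other side meet $\partial U$ only at $x$; granted this, the right-hand side is $\ge(1-|t|)\nrm{x-z}^2>0$ for $z\neq x$.

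The Lipschitz bound then follows from the \emph{same} estimate. Writing $y_i=x_i+t_i\,\eta(x_i)$ with $|t_i|<s$ (legitimate by the fiber identity) and $a=x_1-x_2$, I would expand
\[
\nrm{y_1-y_2}^2=\nrm{a}^2+2t_1\langle a,\eta(x_1)\rangle-2t_2\langle a,\eta(x_2)\rangle+\nrm{t_1\eta(x_1)-t_2\eta(x_2)}^2,
\]
drop the last (nonnegative) term, and bound each cross term by $|\langle a,\eta(x_i)\rangle|\le\tfrac12\nrm{a}^2$ to obtain $\nrm{y_1-y_2}^2\ge(1-2s)\nrm{a}^2$, which is precisely $\mathrm{Lip}(\pi)\le 1/\sqrt{1-2s}$. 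This is the analogue, for the map $\Phi$, of the one-parameter inequality already established in Lemma~\ref{quadrado}.

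The real obstacle is the sharp quadratic estimate $|\langle\eta(x),z-x\rangle|\le\tfrac12\nrm{x-z}^2$ in the \emph{near region}, where $\nrm{x-z}$ is small. Normality (i) only provides $\langle\eta(x),z-x\rangle=o(\nrm{x-z})$, which is too weak here: a first-order modulus cannot dominate the quadratic term $\tfrac12\nrm{x-z}^2$ one needs. The upgrade from $o(\nrm{x-z})$ to $O(\nrm{x-z}^2)$ with the sharp constant $\tfrac12$ must exploit $\mathrm{Lip}(\eta)\le1$, which---as the sphere of radius $r$ shows, where the estimate is an equality---is exactly the threshold making the radius-$r$ tangent balls fit. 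Concretely I would try to obtain it by the contradiction scheme of Lemma~\ref{projection}: if some $z\in\partial U$ entered a tangent ball, the geometric normals at $z$ and at $x$ would be forced apart by more than $\nrm{x-z}$, contradicting $\mathrm{Lip}(\eta)\le1$; Rademacher's theorem (a.e.\ differentiability of $\eta$ with $\nrm{D\eta}\le1$) could be used to make this quantitative. Once this estimate is available, both halves above are routine.
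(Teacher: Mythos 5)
Your reduction of both claims to the single estimate $\left|\langle\eta(x),z-x\rangle\right|\le\frac12\nrm{z-x}^2$ for $x,z\in\partial U$ is where the proof breaks down, and you have correctly sensed this (``the real obstacle''), but the obstacle is not surmountable by the means you suggest. That inequality says exactly that $\partial U$ avoids the two open $r$-balls tangent at $x$; globally this is essentially $r$-regularity itself, and the example of Fig.~\ref{fig:2} is a set satisfying (i)--(iii) for which it fails, so it cannot be a consequence of the standing hypotheses of this section. Even its local version (and only with the worse constant $C_s/2$) is precisely inequality~(\ref{balls}) in the proof of Proposition~\ref{localnormal}, which the paper obtains from the function $f$, whose regularity (Proposition~\ref{fC1}) in turn rests on the Lipschitz continuity of $\pi$ --- that is, on the present lemma. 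Your plan is therefore circular in addition to leaving its key step unproved; and the sketched fix (the contradiction scheme of Lemma~\ref{projection} plus Rademacher) does not adapt, because in Lemma~\ref{projection} both competing points are nearest points to a common $x$, which pins down both normals, whereas a boundary point merely entering a tangent ball gives no control on $\eta$ at that point.

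The paper's proof avoids any such tangency or curvature estimate. For the Lipschitz bound it slides $x$ and $y$ along their fibres to a \emph{common} height $t$ (chosen so that $\nrm{x'-y'}\le\nrm{x-y}$) and applies the inequality $\nrm{p+t\,\eta(p)-q-t\,\eta(q)}\ge\sqrt{1-2|t|}\,\nrm{p-q}$ of Lemma~\ref{quadrado}; there the cross term is $2t\,\langle p-q,\eta(p)-\eta(q)\rangle$ and is controlled by ${\rm Lip}(\eta)\le1$ alone. In your expansion with two different parameters $t_1,t_2$ the cross terms $\langle x_1-x_2,\eta(x_i)\rangle$ appear separately and cannot be grouped into a difference of values of $\eta$, which is exactly why you are forced into the unavailable quadratic estimate. (For the fibre identity the paper says only that it is ``clear''; your ``$\subseteq$'' direction is fine and matches the argument of Lemma~\ref{projection}, but your ``$\supseteq$'' direction again rests on the missing estimate.)
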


\begin{proof}
The second part is clear, hence we only need to prove that $\pi$ is Lipschitz.
Given $x, y\in N_{sr}(\partial U)$, we have $x=\pi(x)+t_1\,\eta(\pi(x))$
and $y= \pi(y)+t_2\,\eta(\pi(y))$, with $t_1,t_2\in (-s,s)$.
Let $x'$, $y'$ be such that
$x'= \pi(x)+t\,\eta(\pi(x))$ and $y'= \pi(y)+t\,\eta(\pi(y))$, for some $t\,\in (-s,s)$,
and such that $\nrm{x'-y'} \leq \nrm{x-y}$.
Clearly, $\pi(x)=\pi(x')$ and $\pi(y)=\pi(y')$.
Using the almost orthogonality relation of Lemma 1, we obtain that
\begin{align*}
&\nrm{\pi(x)-\pi(y)} = \nrm{\pi(x')-\pi(y')} \\
& \hspace{0.4cm} \leq \frac{1}{\sqrt{1-2|t|}}  \nrm{(\pi(x)+t\,\eta(\pi(x)))-(\pi(y)+t\,\eta(\pi(y))}  \\
& \hspace{0.4cm} =  \frac{1}{\sqrt{1-2|t|}} \nrm{x'-y'}  \leq \frac{1}{\sqrt{1-2|t|}} \nrm{x-y}\\
& \hspace{0.4cm} \leq \frac{1}{\sqrt{1-2 s}} \nrm{x-y}.
\end{align*}
\end{proof}

\bigskip

Define now the function $f:N_{r/2}(\partial U)\to \R$,
$$ f(x)=\langle x-\pi(x),\eta(\pi(x)) \rangle \;.$$

\bigskip

\begin{prop}\label{fC1} The function $f$ is of class $C^1$ with differential given by
$$Df_x(v)= \langle v,\eta(\pi(x)) \rangle .$$
\end{prop}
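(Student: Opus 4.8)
The plan is to check the two requirements of class $C^1$ separately: first that the proposed linear map $v \mapsto \langle v, \eta(\pi(x))\rangle$ really is the differential of $f$ at each $x \in N_{r/2}(\partial U)$, and second that $x \mapsto Df_x$ is continuous. The second is the easy half: since differentiability and continuity are local properties and $d(x,\partial U)<r/2$, I would work inside some $N_{sr}(\partial U)$ with $s<1/2$, where Lemma~\ref{lipschitz} makes $\pi$ Lipschitz; as $\eta$ is Lipschitz too, the composition $x \mapsto \eta(\pi(x))$ is continuous, and hence so is $x \mapsto Df_x = \langle\,\cdot\,,\eta(\pi(x))\rangle$ as a map into $\mathscr{L}(\R^n,\R)$.

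For the differentiability estimate I would fix $x$, write $p=\pi(x)$ and $q=\pi(x+v)$, and use the fibre description from Lemma~\ref{lipschitz}: $x=p+t_1\,\eta(p)$ and $x+v=q+t_2\,\eta(q)$ with $t_1,t_2\in(-s,s)$. Since $\nrm{\eta(p)}=r$, this already gives $f(x)=t_1\,r^2$ and $f(x+v)=t_2\,r^2$, so that $f(x+v)-f(x)=(t_2-t_1)\,r^2$.

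The key computation is then to expand the candidate linear term. Writing $v=(q-p)+t_2\,\eta(q)-t_1\,\eta(p)$ and using $\nrm{\eta(p)}^2=r^2$, one obtains
\[
\langle v,\eta(p)\rangle = (t_2-t_1)\,r^2 + \langle q-p,\eta(p)\rangle + t_2\,\langle \eta(q)-\eta(p),\eta(p)\rangle .
\]
Subtracting this from $f(x+v)-f(x)=(t_2-t_1)\,r^2$ shows that the remainder is exactly $-\langle q-p,\eta(p)\rangle - t_2\,\langle \eta(q)-\eta(p),\eta(p)\rangle$. Here the normal-vector-field property applied at the base point $p$ gives $\langle \eta(p),q-p\rangle=o(\nrm{p-q})$, while Lemma~\ref{quadrado} applied at $p$ gives $\langle \eta(p),\eta(q)-\eta(p)\rangle=o(\nrm{p-q})$; since $|t_2|<s$ is bounded, the whole remainder is $o(\nrm{p-q})$ as $q\to p$.

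The final---and only genuinely delicate---point is that these $o$-estimates are controlled in $\nrm{p-q}$ rather than in $\nrm{v}$. This is precisely where the Lipschitz continuity of $\pi$ earns its keep: as $v\to 0$ we have $q=\pi(x+v)\to\pi(x)=p$ by continuity of $\pi$, so the two estimates above are indeed applicable, and the bound $\nrm{p-q}\leq \frac{1}{\sqrt{1-2s}}\,\nrm{v}$ converts $o(\nrm{p-q})$ into $o(\nrm{v})$. This yields $f(x+v)=f(x)+\langle v,\eta(\pi(x))\rangle+o(\nrm{v})$, identifying $Df_x(v)=\langle v,\eta(\pi(x))\rangle$; together with the continuity of $x\mapsto Df_x$ established first, this proves that $f$ is of class $C^1$.
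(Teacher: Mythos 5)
Your proof is correct, and it rests on exactly the same three ingredients as the paper's --- the normal-field property of $\eta$ at the base point, the almost-orthogonality $\langle\eta(p),\eta(q)-\eta(p)\rangle=o(\nrm{p-q})$ from Lemma~\ref{quadrado}, and the Lipschitz bound on $\pi$ from Lemma~\ref{lipschitz} used to trade $o(\nrm{\pi x-\pi(x+v)})$ for $o(\nrm{v})$ --- but it organizes the computation differently. The paper telescopes $f(y)-f(x)-\langle y-x,\eta(\pi x)\rangle$ by inserting intermediate terms and ends up with three error terms (one of them, $\langle y-x,\eta(\pi y)-\eta(\pi x)\rangle$, killed purely by the Lipschitz constants, giving a $C\nrm{x-y}^2$ contribution). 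You instead exploit the fibre parametrization $x=\pi(x)+t_1\,\eta(\pi(x))$ to evaluate $f$ exactly as $t_1 r^2$ along fibres, so the remainder collapses to the exact identity $-\langle q-p,\eta(p)\rangle-t_2\langle\eta(q)-\eta(p),\eta(p)\rangle$ with only two error terms; this is slightly cleaner bookkeeping and makes the role of each hypothesis more transparent. The trade-off is that your argument leans harder on the second assertion of Lemma~\ref{lipschitz} (that $\pi^{-1}(x)$ is a segment of the normal line), which the paper invokes only once, to replace $x-\pi x$ by $\pm\frac{\nrm{x-\pi x}}{r}\,\eta(\pi x)$ in one of its three terms. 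You are also explicit about the continuity of $x\mapsto Df_x$, which the paper's proof leaves implicit (it is addressed only in the remark that follows); that is a point in your favour, since the definition of class $C^1$ used here requires it. Both arguments are sound.
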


\begin{proof} Next argument is a straightforward adaptation of the proof of Proposition 8 in~\cite{DT2},
which we include here for the sake of completeness.
We must show that  one has  for every  $x \in N_{r/2}(\partial U)$  
$$f(y)-f(x)-\langle y-x,\eta(\pi(x))\rangle = o(\nrm{x-y})\; (y\to x)$$
i.e., given $x \in N_{r/2}(\partial U)$ and $\epsilon>0$ there is $\delta >0$ such that for every $y \in N_{r/2}(\partial U)$ with
$\nrm{x-y} \leq \delta$ one has
$$\Mod{f(y)-f(x)-\langle y-x,\eta(\pi(x))\rangle} \leq \epsilon\, \nrm{x-y}\;.$$
To simplify the notation we shall omit some parentheses, writing for instance  $\pi x$ instead of $\pi(x)$.

\begin{align*}
\begin{split}
&    \Mod{f(y)-f(x)-\langle y-x,\eta(\pi x)\rangle}  = \\
& =  \Mod{\langle y-\pi y,\eta(\pi y)\rangle - \langle x-\pi x,\eta(\pi x)\rangle -\langle y-x,\eta(\pi x)\rangle} \\
& =  \Mod{\langle y-\pi y,\eta(\pi y)\rangle - \langle y-\pi x,\eta(\pi x)\rangle} \\
& =  \Mod{\langle y-\pi y,\eta(\pi y)\rangle - \langle y-\pi x,\eta(\pi y)\rangle} \\
&         \hspace{0.4cm}  + \Mod{\langle y-\pi x,\eta(\pi y)\rangle - \langle y-\pi x,\eta(\pi x)\rangle}\\
& = \Mod{\langle \pi x-\pi y,\eta(\pi y)\rangle}  + \Mod{\langle y- \pi x,\eta(\pi y)-\eta(\pi x)\rangle}  \\
& = \Mod{\langle \pi x-\pi y,\eta(\pi y)\rangle}  + \Mod{\langle y- x,\eta(\pi y)-\eta(\pi x)\rangle}  \\
&        \hspace{0.4cm} + \Mod{\langle x-\pi x,\eta(\pi y)-\eta(\pi x)\rangle}  \\
& =  \Mod{\langle \pi x-\pi y,\eta(\pi y)\rangle}  + \Mod{\langle y- x,\eta(\pi y)-\eta(\pi x)\rangle}  \\
&         \hspace{0.4cm} + \frac{\nrm{x-\pi x}}{r}\, \Mod{\langle \eta(\pi x) ,\eta(\pi y)-\eta(\pi x)\rangle}   \\
& \leq \rho(\nrm{\pi x - \pi y})\nrm{\pi x - \pi y} + \nrm{x-y}\,\nrm{\eta(\pi x)-\eta(\pi y)} \\
&  \hspace{0.4cm} + \tilde{\rho}(\nrm{\pi x - \pi y})\nrm{\pi x - \pi y} \\
& \leq C \rho(\nrm{\pi x - \pi y})\nrm{x - y} + C \nrm{x-y}^2 + C \tilde{\rho}(\nrm{ x - y})\\
& \leq C\,\nrm{x-y}\,(\rho(\nrm{\pi x - \pi y}) + \nrm{x-y} + \tilde{\rho}(\nrm{\pi x - \pi y}))  \\
& \leq C \, \nrm{x-y}\, \hat{\rho}(\nrm{x-y})	\;,\\
\end{split}
\end{align*}
where $\hat{\rho}(t)=t+\rho(t) + \tilde{\rho}(t)$.
We observe that in the penultimate step we have used the fact that both $\eta$ and $\pi$ are Lipschitz on $N_{sr}$, provided that
$s \in (0,1/2)$, and the constant $C=C_s$ is given explicitly by $C_s=\frac{1}{\sqrt{1-2 s}}$.
\end{proof}

\bigskip

\begin{rmk}
In Proposition~\ref{fC1}, function $f$ is indeed of class $C^{1+{\rm Lip}}$.
Just note that, since  $\eta$ and $\pi$ are Lipschitz, the differential $Df$ is also Lipschitz.
\end{rmk}

\bigskip

We are now ready to prove Proposition~\ref{localnormal}, i.e., to show that 
the level set $\partial U=f^{-1}(0)$ is locally bounded between two spheres of radius $r$ tangent at $x$, for any $x \in \partial U$.

\begin{proof}[of Proposition~\ref{localnormal}]
 By the mean value theorem, for any $x, x+v \in  N_{r/2}(\partial U)$ we have that
$$f(x+v)- f(x)-Df_x(v)  = \int_0^1 Df_{x+t\,v}(v)\,dt - Df_x(v).$$
But since, for any $x,y \in N_{r/2}(\partial U)$, we have that
\begin{align*}
\begin{split}
\Mod{Df_x(v)-Df_y(v)}  & = \Mod{\langle v,\eta(\pi x)\rangle - \langle v,\eta(\pi y)\rangle}\\
& =  \Mod{\langle v,\eta(\pi x) - \eta(\pi y)\rangle} \\
& \leq \nrm{v}\nrm{\pi x - \pi y} \\
& \leq  {\rm Lip}(\pi) \nrm{v} \nrm{x-y},\\
\end{split}
\end{align*}
it follows that
\begin{align*}
\begin{split}
& \Mod{f(x+v)- f(x)-Df_x(v)} = \\
& \hspace{0.4cm} = \Mod{\int_0^1 (Df_{x+t\,v}(v) - Df_x(v)) \, dt}\\
& \hspace{0.4cm} \leq \int_0^1 \Mod{Df_{x+t\,v}(v) - Df_x(v)} \, dt \\
& \hspace{0.4cm} \leq \int_0^1 {\rm Lip}(\pi) \nrm{v} \nrm{x+t\,v-x}\, dt \\
& \hspace{0.4cm} =  {\rm Lip}(\pi) \nrm{v}^2 \int_0^1 t\,dt\\
& \hspace{0.4cm} =  \frac{1}{2}\,{\rm Lip}(\pi) \nrm{v}^2.\\
\end{split}
\end{align*}
Therefore, given $x \in \partial U$ and $x+v \in N_{sr}$, for some $s \in (0,1/2)$, we get that
\begin{align*}
\begin{split}
& \Mod{f(x+v)-f(x)-Df_x(v)} = \\
& \hspace{0.4cm} = \Mod{f(x+v)-Df_x(v)} \leq \frac{C_s}{2}\,\nrm{v}^2,\\
\end{split}
\end{align*}
or, equivalently,
\begin{equation}\label{balls}
\Mod{f(x+v)-\langle v, \eta(x) \rangle} \leq  \frac{C_s}{2}\,\nrm{v}^2,
\end{equation}
where $C_s=\frac{1}{\sqrt{1-2 s}}$.

Now define the set $A(x)$ of vectors $v\in\R^n$ such that 
$$ \nrm{(x+v)-\left(x \pm \frac{\eta(x)}{C_s}\right)}^2 < \left(\frac{r}{C_s}\right)^2 \;,$$
for one of the signs $+$ or $-$.
Note that $x+v$ belongs to one of the two balls 
$B\left(x \pm \frac{\eta(x)}{C_s}, \frac{r}{C_s}\right)$, which are tangent at $x$,\,
if and only if \, $v\in A(x)$.
We claim that $v\in A(x)$ implies that
$$\left|\langle v, \eta(x) \rangle \right| > \frac{C_s}{2}\,\nrm{v}^2.$$
In fact, first note that
\begin{align*}
\begin{split}
& \nrm{(x+v)-\left(x \pm \frac{\eta(x)}{C_s}\right)}^2  \geq \\
& \hspace{1.0cm} \nrm{v}^2 + \left(\frac{\nrm{\eta(x)}}{C_s}\right)^2 - \frac{2}{C_s} \left|\langle v, \eta(x) \rangle \right|. \\
\end{split}
\end{align*}
Consequently, if $v \in A(x)$, then the following inequality holds
$$  \left(\frac{r}{C_s}\right)^2 >   \nrm{v}^2 + \left(\frac{\nrm{\eta(x)}}{C_s}\right)^2 - \frac{2}{C_s} \left|\langle v, \eta(x) \rangle \right| $$
which is equivalent to
$$ \displaystyle{\left|\langle v, \eta(x) \rangle \right|  >  \frac{C_s}{2}\,\nrm{v}^2}.$$
Therefore, if $v \in A(x)$, then inequality~(\ref{balls}) implies that
$$\displaystyle{\left|f(x+v) \right| \geq \left| \langle v, \eta(x) \rangle \right|- \frac{C_s}{2} \nrm{v}^2 >0}.$$
Hence,
$$\left\{ x+v:\, v \in A(x)\right\} \cap f^{-1}(0) = \emptyset$$
and thus, we may conclude that locally the level set $\partial U=f^{-1}(0)$ is bounded between two  balls of radius $r/{C_s}$
tangent at $x$.
Notice that the constant $C_s$ gets close to $1$ as $s \rightarrow 0$.
Therefore, decreasing the neighbourhood $N_{sr}$ of $\partial U$, we can take the radius of the balls arbitrarily close to $r$.
\end{proof}

\section{An Example}
\label{sec:4}
Let $U \subseteq \R^n$ be an open set.
In this section we show that the existence of a normal vector field of constant norm $r$ and Lipschitz constant $1$
along $\partial U$ (given by items $(1)$ $\mbox{\rm (i)}$, $\mbox{\rm (ii)}$ and  $\mbox{\rm (iii)}$ of Theorem~\ref{main}) is not enough to guarantee that $U$ is (globally) $r$-regular.
Indeed, consider a body $U$, as sketched in Fig.~\ref{fig:2}, but in a large scale so that the curvature is small, but the connection between the balloons is still
very thin. Then the Lipschitz assumption on the ``normal vector field" is satisfied, but $U$ is not $r$-regular because of the narrow connection.

\begin{figure}[h]
\begin{center}
\includegraphics[width=0.5\textwidth]{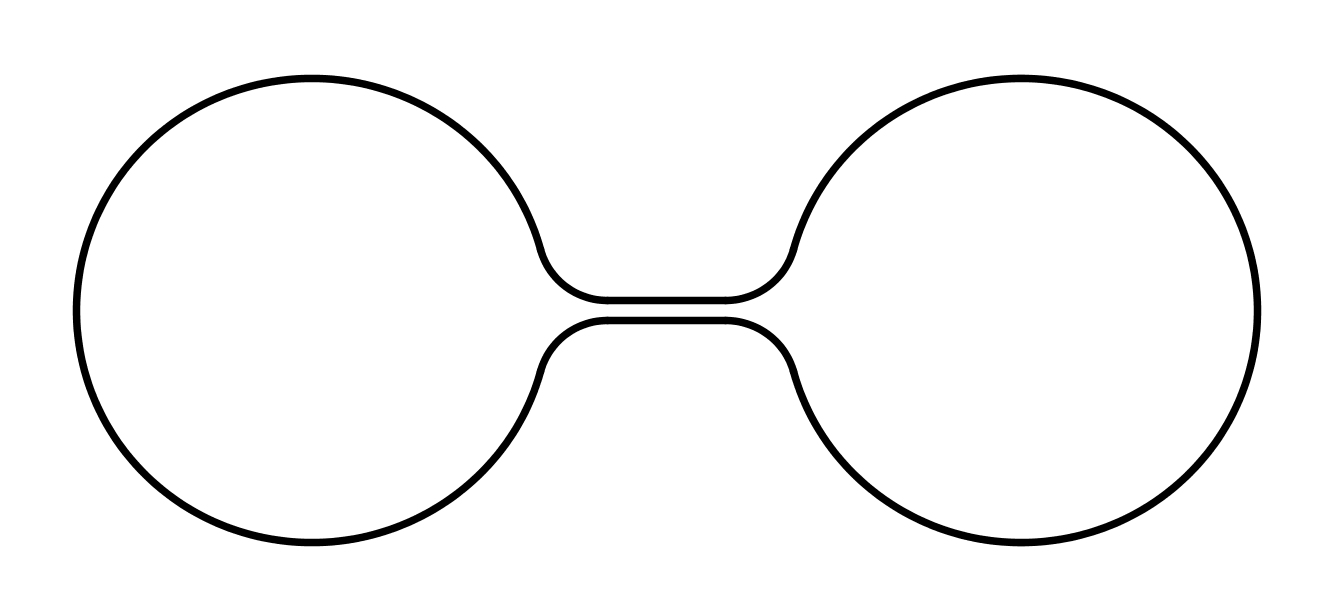}
\end{center}
\caption{A non-regular set that satisfies the Lipschitz assumption on the normal vector field}
\label{fig:2}
\end{figure}

Therefore, we need an extra assumption to ensure that $U$ is (globally) $r$-regular.
This extra assumption (given by item $(2)$ of Theorem~\ref{main}) is that for every $x,y \in \partial U$,
$d_{\partial U}(x,y)\geq \frac{\pi}{2}\, d(x,y)$ implies $d(x,y)\geq 2\,r$. The bound $\frac{\pi}{2}$ corresponds to the maximum ratio
between these two distances when $U$ is an Euclidean ball.

\section{$C^{1+{\rm Lip}}$-geodesics}
\label{sec:5}
In the proof of the ``if" part of Theorem~\ref{main} we shall use some tools and results from Riemannian Geometry.
The natural scope of this theory is that of smooth manifolds of at least class $C^2$. The results we need can easily be generalized to  $C^{1+{\rm Lip}}$ manifolds, but since we are not aware of any suitable reference, for the sake of completeness,  we include this section  where we prove the needed generalizations.

\bigskip

From now on $M\subseteq \R^n$ will denote a class $C^{1+{\rm Lip}}$ connected manifold of dimension $n-1$,
defined as a regular level set of some function of class $C^{1+{\rm Lip}}$. By Proposition~\ref{pre-image-C1lip},
given $p,q\in M$ there are  $C^{1+{\rm Lip}}$ curves
$\gamma:[a,b]\to M$ such that $\gamma(a)=p$ and $\gamma(b)=q$.
Hence the  {\em loop space} $\Omega=\Omega_{p,q}(a,b)$ of all
such curves is non empty. This loop space is a subset of
the normed vector space
$C^{1+{\rm Lip}}([a,b],\R^n)$ endowed with the norm
$$\nrm{\gamma}_{C^1}= \max_{t\in[a,b]} \nrm{\gamma(t)}  + \max_{t\in[a,b]} \nrm{\gamma'(t)} \;.$$
We topologize $\Omega$ as a subspace of this normed space.
The {\em energy} and {\em length} of a curve
$\gamma\in\Omega$ are respectively defined to be
$$ E(\gamma)=\int_a^ b \nrm{\gamma'(t)}^ 2\, dt \quad \text{ and }
\quad L(\gamma)=\int_a^ b \nrm{\gamma'(t)}\, dt\;.
$$

Fix a  curve $\gamma_0\in\Omega$.
We call {\em variation of $\gamma_0$} to any continuous function
$h:(-\delta,\delta)\times [a,b]\to M$ such that
\begin{enumerate}
\item[(1)] $h(s,t)$ is Lipschitz on $(-\delta,\delta)\times [a,b]$,
\item[(2)] $h(0,t)=\gamma_0(t)$ for all $t\in [a,b]$,
\item[(3)] $t\mapsto h(s,t)$ is of class $C^{1+{\rm Lip}}$
for every $s\in (-\delta,\delta)$,
\item[(4)] the function $\zeta(t):=\frac{dh}{ds}(0,t)$ is
Lipschitz in $[a,b]$.
\end{enumerate}
We shall also say that $h$ is a {\em variation of $\gamma_0$
along  $\zeta$}.
The variation $h$ is said to be {\em proper} if furthermore
\begin{enumerate}
\item[(5)] $h(s,a)=p$  and $h(s,b)=q$  for all $s\in (-\delta,\delta)$.
\end{enumerate}
The variation $h$ determines a curve $\overline{h}:(-\delta,\delta)\to \Omega$ such that $\overline{h}(0)=\gamma_0$.
The  vector field $\zeta$ along $\gamma_0$  is the formal derivative $\overline{h}'(0)=\zeta$ of this curve.
It is a section of the  vector bundle $\gamma_0^\ast TM$ over $[a,b]$, which has fiber $T_{\gamma_0(t)} M$ at 
the base point $t\in [a,b]$. We shall also refer to $\zeta$ as a {\em tangent vector field}, to emphasize 
that it is tangent to the hypersurface $M$.
If $h$ is proper variation then the vector field $\zeta$ is also proper in the
sense that $\zeta(a)=\zeta(b)=0$.

\bigskip

The existence of variations of a curve $\gamma_0\in\Omega$
is usually proven through the {\em exponential map} of the
Riemannian manifold $M$. We avoid these technicalities working in a local chart.
Given  $\varepsilon>0$, two vector fields $\zeta_1, \zeta_2:[a,b]\to\R^n$ are said to be
{\em  $\varepsilon$-close }\, when $\nrm{\zeta_1(t)-\zeta_2(t)}\leq \varepsilon$ for all $t\in [a,b]$.

\begin{prop}\label{var:exist}
Given  $\varepsilon>0$, a curve $\gamma_0\in\Omega$ contained in the domain of a single chart of $M$,  
  and any continuous proper vector field $\zeta:[a,b]\to\R^n$ tangent to $M$  
along $\gamma_0$, there is at least one proper
  variation of $\gamma_0$ along a  Lipschitz vector field
  $\varepsilon$-close to $\zeta$.
\end{prop}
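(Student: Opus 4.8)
The plan is to work entirely inside the single chart promised by the hypothesis, where $M$ looks like the graph of a $C^{1+\Lip}$ function over an open subset of $\R^{n-1}$, and to build the variation by the most elementary device available: move the base curve in the domain coordinates and then lift back up to $M$. The key point is that the chart lets us trade the geometric ``stay on $M$'' constraint for an analytic one, after which the whole construction reduces to composing Lipschitz maps.

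Concretely, let $\phi = \pi\circ\Phi\vert_{M\cap U}$ be the chart of class $C^{1+\Lip}$ furnished by Proposition~\ref{pre-image-C1lip}, with inverse $\psi = \phi^{-1}:W\to M$ defined on an open $W\subseteq\R^{n-1}$, also of class $C^{1+\Lip}$. Write $u_0(t)=\phi(\gamma_0(t))\in W$ for the base curve read in the chart; it is $C^{1+\Lip}$. The given tangent field $\zeta(t)\in T_{\gamma_0(t)}M$ pulls back to a continuous field $w(t)=(D\psi_{u_0(t)})^{-1}\zeta(t)$ in $\R^{n-1}$, proper because $\zeta(a)=\zeta(b)=0$. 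First I would replace $w$ by a Lipschitz field $\tilde w$ that is still proper and uniformly close to $w$ — a standard mollification or piecewise-linear interpolation on $[a,b]$ does this, and pushing forward gives a Lipschitz tangent field $\tilde\zeta=D\psi_{u_0}\,\tilde w$ that is $\varepsilon$-close to $\zeta$ once the approximation of $w$ is taken fine enough (here one uses that $D\psi$ is bounded and continuous on the relevant compact set). Then I would define the variation in the chart by the affine formula
\[
\hat h(s,t) = u_0(t) + s\,\tilde w(t),
\]
and set $h(s,t)=\psi(\hat h(s,t))$. Choosing $\delta>0$ small enough keeps $\hat h(s,t)$ inside $W$ for all $(s,t)$, since $u_0([a,b])$ is compact in the open set $W$.

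It remains to verify the five defining properties of a proper variation for this $h$. Property~(2) holds because $\hat h(0,t)=u_0(t)$ and $\psi(u_0(t))=\gamma_0(t)$. Property~(5), properness, holds because $\tilde w(a)=\tilde w(b)=0$, so $\hat h(s,a)=u_0(a)$ and $\hat h(s,b)=u_0(b)$ for every $s$, whence $h(s,a)=p$ and $h(s,b)=q$. For property~(1), $\hat h$ is jointly Lipschitz on $(-\delta,\delta)\times[a,b]$ (it is affine in $s$ with Lipschitz coefficients $u_0,\tilde w$ in $t$), and $\psi$ is Lipschitz on the compact set $\hat h((-\delta,\delta)\times[a,b])\subseteq W$, so the composition $h$ is Lipschitz. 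Property~(3) is the statement that for fixed $s$ the curve $t\mapsto\psi(u_0(t)+s\tilde w(t))$ is $C^{1+\Lip}$: here $u_0+s\tilde w$ need only be $C^{1+\Lip}$ in $t$, which forces $\tilde w$ to be taken $C^{1+\Lip}$ rather than merely Lipschitz — so in the approximation step I would use a $C^1$ (indeed smooth) proper approximant $\tilde w$, whose derivative is automatically Lipschitz on $[a,b]$, and then $\psi\circ(C^{1+\Lip})$ is $C^{1+\Lip}$ because $\psi\in C^{1+\Lip}$ and the chain rule composes $C^{1+\Lip}$ maps into $C^{1+\Lip}$ maps. Finally property~(4): $\zeta_h(t):=\frac{\partial h}{\partial s}(0,t)=D\psi_{u_0(t)}\,\tilde w(t)=\tilde\zeta(t)$, which is Lipschitz by construction, and this is exactly the field along which $h$ is a variation.

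The main obstacle — and the only genuinely delicate point — is reconciling the regularity demands across the two roles the approximant plays: properties~(1) and~(4) want $\tilde w$ merely Lipschitz, but property~(3) wants each frozen-$s$ slice to be $C^{1+\Lip}$, which through the affine formula needs $\tilde w$ itself to be $C^{1+\Lip}$ in $t$. I would resolve this by taking $\tilde w$ smooth (e.g.\ a mollified piecewise-linear proper interpolant), for which all three regularity conclusions hold simultaneously, while the $\varepsilon$-closeness of $\tilde\zeta$ to $\zeta$ is controlled by the sup-norm of $\tilde w-w$ times $\sup_t\nrm{D\psi_{u_0(t)}}$. Everything else is routine composition of Lipschitz and $C^{1+\Lip}$ maps on compacta, using precisely the $C^{1+\Lip}$ regularity of the chart $\psi$ guaranteed by Proposition~\ref{pre-image-C1lip} and the Lipschitz-composition facts recorded in its proof.
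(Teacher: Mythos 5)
Your proposal is correct and follows essentially the same route as the paper: pull the curve and the tangent field back through the $C^{1+{\rm Lip}}$ chart, replace the pulled-back field by a smooth proper approximant, form the affine variation $u_0(t)+s\,\tilde w(t)$ in the chart, push forward with $\phi^{-1}$, and control $\varepsilon$-closeness by the sup of $\nrm{D\phi^{-1}}$ on a compact neighbourhood. Your explicit remarks on shrinking $\delta$ to stay in the chart domain and on why the approximant must be taken smooth (not merely Lipschitz) to secure property (3) are sound refinements of the same argument.
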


\begin{proof}
Fix a curve $\gamma_0 \in \Omega$ for which there is a local chart  $\phi:U\subseteq M\to \R^{n-1}$ 
of class $C^{1+{\rm Lip}}$ such that $\gamma_0([a,b])\subset U$.
Define
$$\tilde{\zeta}(t):=D \phi_{\gamma_0(t)} \zeta(t),$$
and
$$\tilde{\gamma}_0(t) := \phi(\gamma_0(t)).$$
Let $\tilde{\zeta}^{*}$ be a proper $C^\infty$-vector field $\varepsilon$-close to $\tilde{\zeta}$.
We define the function $\tilde{h}: (-\delta,\delta)\times [a,b]\to \R^{n-1}$ by
$$\tilde{h}(s,t):=\tilde{\gamma}_0(t) + s \,\tilde{\zeta}^{*}(t).$$
Finally, let
$$
h(s,t):=\phi^{-1} \tilde{h} (s,t) = \phi^{-1} (\tilde{\gamma}_0(t) + s \, \tilde{\zeta}^{*} (t)).$$
We have that
\begin{enumerate}
\item[(1)] $h(s,t)$ is Lipschitz on $(-\delta,\delta)\times [a,b]$,
because $\tilde{\gamma}_0$ and $\phi^{-1}$ are of class $C^{1+{\rm Lip}}$ and
$\tilde{\zeta}^{*}$ is of class $C^\infty$,
\item[(2)] $h(0,t)=\phi^{-1} (\tilde{\gamma}_0(t))=\gamma_0(t)$ for all $t\in [a,b]$,
\item[(3)] $t\mapsto h(s,t)$ is of class $C^{1+{\rm Lip}}$
for every $s\in (-\delta,\delta)$, because $\tilde{\gamma}_0$ and $\phi^{-1}$ are of class $C^{1+{\rm Lip}}$ and
$\tilde{\zeta}^{*}$ is of class $C^\infty$,
\item[(4)] the function $\zeta^{*}(t):=\frac{dh}{ds}(0,t)=D \phi^{-1}_{\tilde{\gamma}_0(t)} \tilde{\zeta}^{*}(t)$ is
Lipschitz in $[a,b]$, 
\item[(5)] $h(s,a)=\phi^{-1} (\tilde{\gamma}_0(a))=p$  and 
$h(s,b)=\phi^{-1} (\tilde{\gamma}_0(b))=q$ for all $s\in (-\delta,\delta)$, because $\tilde{\zeta}^{*}$ is proper.
\end{enumerate}
Therefore, $h$ is a proper variation of $\gamma_0$ along $\zeta^{*}$.
Furthermore, 
$$\begin{array}{rcl}
\nrm{\zeta^{*}(t)-\zeta(t)} & = & \nrm{D \phi^{-1}_{\tilde{\gamma}_0(t)}\tilde{\zeta}^{*}(t) - \zeta(t)} \\
{} & \leq & K\,\nrm{\tilde{\zeta}^{*}(t) - \tilde{\zeta}(t) },\\
{} & \leq & K\,\varepsilon \\
\end{array}
$$
where $K=\displaystyle{\max_{x \in\phi(U)}} \nrm{D \phi_x^{-1}}$.
We have $K<+\infty$  because we can shrink the domain $U$ to a compact neighbourhood
of $\gamma_0([a,b])$. Since $K\,\varepsilon$ can be  arbitrarily small the lemma follows.
\end{proof}

\begin{prop}\label{geodesic:char}
The length functional $L:\Omega\to \R$
attains an absolute minimum at a curve $\gamma\in\Omega$
which is also an absolute minimum of the energy functional $E:\Omega\to \R$. Moreover this minimum satisfies
\begin{enumerate}
\item[$(1)$]\, 
$\nrm{\gamma'(t)}$ is constant,
\item[$(2)$]\, $\gamma''(t)\perp T_{\gamma(t)}M$ for almost every
$t\in [a,b]$.
\end{enumerate}
\end{prop}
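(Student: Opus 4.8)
The plan is to run the direct method of the calculus of variations and to read off item $(2)$ from the first variation of the energy, using the elementary relation between length and energy to handle the minimization and item $(1)$ simultaneously.

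First I would record the Cauchy--Schwarz inequality
$$L(\gamma)^2 = \left(\int_a^b \nrm{\gamma'(t)}\,dt\right)^2 \leq (b-a)\int_a^b \nrm{\gamma'(t)}^2\,dt = (b-a)\,E(\gamma),$$
with equality exactly when $\nrm{\gamma'(t)}$ is constant. This alone yields the equivalence asserted in the statement: if $\gamma$ minimizes $L$, reparametrizing it proportionally to arc length produces $\hat\gamma\in\Omega$ with $L(\hat\gamma)=L(\gamma)$ and constant speed, so that $E(\hat\gamma)=L(\hat\gamma)^2/(b-a)$; since every $\sigma\in\Omega$ satisfies $E(\sigma)\geq L(\sigma)^2/(b-a)\geq L(\hat\gamma)^2/(b-a)=E(\hat\gamma)$, the curve $\hat\gamma$ minimizes $E$ as well and, being of constant speed, already fulfills item $(1)$. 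Thus it suffices to produce a length minimizer and to verify item $(2)$ for its constant-speed representative.

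For existence I would argue by compactness. Writing $\ell=\inf_\Omega L$ and taking a minimizing sequence reparametrized to constant speed, the curves become uniformly Lipschitz, hence equicontinuous and uniformly bounded; by Arzelà--Ascoli a subsequence converges uniformly to a Lipschitz curve $\gamma$ lying on $M$ (a closed subset of $\R^n$) and joining $p$ to $q$, and lower semicontinuity of length under uniform convergence gives $L(\gamma)=\ell$. Next I would extract the weak Euler--Lagrange equation: given any continuous proper tangent field $\zeta$ along $\gamma$, Proposition~\ref{var:exist} supplies proper variations $h$ along Lipschitz tangent fields arbitrarily close to $\zeta$, and differentiating $E(h(s,\cdot))$ at $s=0$ yields
$$\left.\frac{d}{ds}\right|_{s=0} E(h(s,\cdot)) = 2\int_a^b \langle \gamma'(t),\zeta'(t)\rangle\,dt = 0,$$
an identity that makes sense for the merely Lipschitz $\gamma$ and that extends, by density of smooth fields in a chart, to all proper Lipschitz tangent fields $\zeta$.

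It remains to upgrade this weak identity to item $(2)$. The displayed equation is precisely the weak form of the submanifold geodesic equation $\gamma''\perp T_\gamma M$; working in a $C^{1+{\rm Lip}}$ chart (Proposition~\ref{pre-image-C1lip}) it becomes the classical geodesic ODE, from which I would bootstrap the regularity of $\gamma$, concluding that $\gamma'$ is Lipschitz so that $\gamma\in\Omega$. Once $\gamma'$ is Lipschitz, Rademacher's theorem (Theorem~\ref{Rademacher}) guarantees that $\gamma''$ exists almost everywhere, integration by parts turns the identity above into $\int_a^b \langle \gamma''(t),\zeta(t)\rangle\,dt=0$ (the boundary terms vanishing since $\zeta$ is proper), and the fundamental lemma of the calculus of variations forces $\gamma''(t)\perp T_{\gamma(t)}M$ for almost every $t$, which is item $(2)$. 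The hard part will be exactly this regularity bootstrap: because $M$ is only $C^{1+{\rm Lip}}$, the induced metric is merely Lipschitz and its Christoffel symbols only $L^\infty$, so the usual elliptic smoothing is unavailable and one must argue directly from the geodesic ODE that $\gamma'$ stays Lipschitz.
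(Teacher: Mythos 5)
Your overall architecture (Cauchy--Schwarz to link $L$ and $E$, first variation of energy via Proposition~\ref{var:exist}, integration by parts, fundamental lemma) matches the paper's, but your existence step diverges in a way that creates a genuine gap. You run the direct method over \emph{all} Lipschitz curves: the Arzel\`a--Ascoli limit of a constant-speed minimizing sequence is merely Lipschitz, hence not known to lie in $\Omega\subseteq C^{1+{\rm Lip}}([a,b],\R^n)$, and everything downstream depends on repairing this. You acknowledge the problem and defer it to a ``regularity bootstrap,'' but you never carry it out --- you only explain why it is hard (Christoffel symbols merely $L^\infty$). That deferred step is not a routine detail: the integration by parts producing $\int_a^b\langle\gamma'',\zeta\rangle\,dt=0$ requires $\gamma''$ to exist almost everywhere, which requires $\gamma'$ to be Lipschitz (or at least absolutely continuous), which is exactly what the bootstrap is supposed to deliver. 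There is also a secondary issue: Proposition~\ref{var:exist} is stated for $\gamma_0\in\Omega$, so invoking it to get the weak Euler--Lagrange identity for a curve that is only Lipschitz requires re-proving that proposition in the larger class; you use it before the regularity is established.

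The paper sidesteps all of this by minimizing over the subclass $\Omega^c=\{\gamma\in\Omega:{\rm Lip}(\gamma')\leq c\}$, which is compact by Arzel\`a--Ascoli applied to the \emph{derivatives}; the minimizer then has ${\rm Lip}(\gamma')\leq c$ by fiat, so $\gamma''$ exists a.e.\ by Rademacher and no bootstrap is needed. If you want to keep your route, you must actually prove the bootstrap: from $\int_a^b\langle\gamma',\zeta'\rangle\,dt=0$ for all proper Lipschitz tangent fields $\zeta$, together with $\nrm{\gamma'}\equiv c_0$ and the existence of a Lipschitz unit normal $\nu$ in a chart, show that the distributional derivative of $\gamma'$ equals $-\langle\gamma',(D\nu\circ\gamma)\gamma'\rangle\,\nu(\gamma)$, which is in $L^\infty$ since $\nu$ is Lipschitz, and conclude $\gamma'\in{\rm Lip}$. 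As written, the proposal names the essential difficulty instead of resolving it, so the proof is incomplete.
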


\begin{proof}
For any $c>0$ define $\Omega^c$ to be the set of curves
$\gamma\in\Omega=\Omega_{p,q}(a,b)$ such that ${\rm Lip}(\gamma')\leq c$.
Fix $c>0$ such that $\Omega^c\neq \emptyset$.
It is readily seen that the set of derivatives
$\{\, \gamma'\,:\, \gamma\in\Omega^ c\,\}$ is equicontinuous.
Hence by the Ascoli-Arzel\'a's theorem the set
$\Omega^ c$ is compact. Since the functional $L$ is continuous on $\Omega$, by Weierstrass Minimum Principle there is a curve
$\gamma_0\in\Omega^c$ such that $L(\gamma_0)$ is the minimum value
of the length on $\Omega^c$. The trace $\Gamma=\gamma_0([a,b])$ is
a $1$-dimensional manifold of class $C^ 1$, for otherwise it wouldn't minimize the length. Hence we can reparametrize $\gamma_0$
to have constant speed. Thus we  assume  $\gamma_0$ satisfies $\nrm{\gamma_0'(t)}=c_0$ for all $t\in [a,b]$, with $c_0\leq c$.
Off course $\gamma_0$ is also the length absolute minimum
in $\Omega$.
By Jensen's inequality,
for any other curve $\gamma\in\Omega$  we have
\begin{align*}
E(\gamma_0) & = c^ 2 (b-a) = \frac{1}{b-a}\, L(\gamma_0)^ 2
\leq  \frac{1}{b-a} \, L(\gamma)^2\\
&= (b-a)\,\left(  \frac{1}{b-a}\,\int_a^b \nrm{\gamma'(t)}\, dt \right)^2\\
& \leq \int_a^b \nrm{\gamma'(t)}^2\, dt = E(\gamma)\;,
\end{align*}
which proves that $\gamma_0$ is also an absolute minimum of
$E:\Omega\to\R$.

Let us now prove item (2).
We can split  $\gamma$ in several pieces so that each part 
is contained in the domain of a single chart.
Clearly each subcurve of $\gamma$ also  minimizes both $L$ and $E$.
Thus, since conclusion (2) of Proposition~\ref{geodesic:char} is local,
we can assume that $\gamma([a,b])$ is contained in the domain of a single chart.
This will allow us to apply Proposition~\ref{var:exist}.
We claim that for any continuous proper vector field 
$\xi$ tangent to $M$ along $\gamma$
$$\displaystyle{\int_a^b \langle \gamma''(t),\xi(t) \rangle \,dt =0}.$$
We first prove this for Lispchitz vector fields associated to proper variations of $\gamma$.
Let $h$ be a proper variation of $\gamma$ along the Lispchitz vector field $\xi$.
Because $E(\overline{h}(s))$ attains its minimum value at $s=0$,
\begin{align*}
0 =   & \frac{d}{ds} E(\overline{h}(s))\vert_{s=0}  = \int_a^b
\frac{d}{ds} \langle \frac{d h}{dt}, \frac{d h}{dt} \rangle \, dt\\
&=  2 \int_a^b
\langle \gamma'(t), \frac{d}{dt}\left[ \frac{dh}{ds}(0,t)\right]  \rangle \, dt\\
& = 2 \int_a^b
 \langle \gamma'(t), \xi'(t) \rangle \, dt \\
 & =  [\langle \gamma'(t),\xi (t) \rangle ]_a^b -  \int_a^b \langle \gamma''(t),\xi(t) \rangle \,dt
 \; .
\end{align*}
Thus, since $\xi$ is proper, $[\langle \gamma'(t),\xi(t) \rangle ]_a^b =0$
and
$$\displaystyle{\int_a^b \langle \gamma''(t),\xi(t) \rangle \,dt =0}.$$
Consider now any continuous  proper vector field $\zeta(t)$ tangent to $M$ along $\gamma$,
and fix a small number $\varepsilon>0$.
By Proposition~\ref{var:exist},
 there is a proper variation $h$ of $\gamma$ along a Lipschitz vector field $\xi$
 which is  $\varepsilon$-close to $\zeta$.
From the previous claim we know that
$\int_a^b \langle \gamma''(t),\xi(t) \rangle \,dt =0$.
Therefore
\begin{align*}
\Mod{ \int_a^b \langle \gamma''(t),\zeta(t) \rangle \,dt }
&\leq \Mod{ \int_a^b \langle \gamma''(t),\zeta(t) \rangle  -  \langle \gamma''(t),\xi(t) \rangle \,dt }\\
&\leq \int_a^b \Mod{ \langle \gamma''(t),\zeta(t) - \xi(t) \rangle } \,dt \\
&\leq \int_a^b \nrm{\gamma''(t)} \nrm{\zeta(t)-\xi(t)}\, dt\\
&\leq c\,(b-a)\,\varepsilon\;, 
\end{align*}
where $c={\rm Lip}(\gamma')$.
Consequently, since $\varepsilon$ can be arbitrarily small,
$$\displaystyle{\int_a^b \langle \gamma''(t),\zeta(t) \rangle \,dt =0}.$$ 
To finish the proof we need to establish (2).
Consider continuous proper vector fields $\zeta_1, \ldots, \zeta_{n-1}$ along $\gamma$, tangent to $M$,
 such that  $\{\zeta_1(t), \cdots, \zeta_{n-1}(t)\}$ is a basis of $T_{\gamma(t)} M$ for all $t \in (a,b)$. 
 Given any continuous function $g:[a,b] \to \R$, since $g \,\zeta_j$ is a 
continuous proper vector field we have 
$\int_a^b \langle \gamma''(t), g(t) \zeta_j(t) \rangle \,dt =0$,
which implies that
$$ \int_a^b \langle \gamma''(t), \zeta_j(t) \rangle\, g(t)\,dt =0\;. $$ 
Thus, because $g$ is arbitrary,
$$\langle \gamma''(t), \zeta_j(t) \rangle=0 \quad \forall\, 1\leq j\leq n-1\;, $$
for almost every $t\in [a,b]$, which implies that
$\gamma''(t)\perp T_{\gamma(t)}M$ for the same values of 
$t\in [a,b]$.
\end{proof}

\bigskip

By the previous proposition given points $p,q\in M$,
there is a curve $\gamma\in\Omega$ connecting $p$ and $q$,
of minimum length $\ell=L(\gamma)$, which can be reparametrized on the interval $[0,\ell]$ to have unit speed, i.e., $\nrm{\gamma'(t)}=1$ for every $t\in [0,\ell]$. As usual,
such a curve will be referred as {\em minimizing unit geodesic}
from $p$ to $q$.

\begin{prop}\label{geo:lip}
If there is a Lipschitz normal field $\eta$ to $M$ with constant norm, $\nrm{\eta(x)}=r$  for every $x\in M$,
and Lipschitz constant ${\rm Lip}(\eta)\leq 1$,
then every minimizing unit geodesic $\gamma:[0,\ell]\to M$
satisfies ${\rm Lip}( \gamma')\leq 1/r$.
\end{prop}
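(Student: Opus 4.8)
The plan is to establish $\nrm{\gamma''(t)}\leq 1/r$ for almost every $t\in[0,\ell]$, which immediately yields ${\rm Lip}(\gamma')\leq 1/r$: indeed $\gamma'$ is Lipschitz (being a curve in some $\Omega^c$), hence absolutely continuous, so $\gamma'(t)-\gamma'(s)=\int_s^t\gamma''(u)\,du$ and $\nrm{\gamma'(t)-\gamma'(s)}\leq\frac{1}{r}\,\Mod{t-s}$.

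First I would record the geometric facts. Because $\eta$ is a normal vector field along the hypersurface $M=\partial U$, the almost-orthogonality condition forces $\eta(x)\perp T_xM$ in the usual sense, so $\eta(\gamma(t))$ spans the one-dimensional normal line to $M$ at $\gamma(t)$. By Proposition~\ref{geodesic:char}$(2)$ we have $\gamma''(t)\perp T_{\gamma(t)}M$ for almost every $t$, and since this normal line is one-dimensional and spanned by $\eta(\gamma(t))$ (of norm $r\neq 0$), I may write $\gamma''(t)=\lambda(t)\,\eta(\gamma(t))$ for almost every $t$. Pairing with $\eta(\gamma(t))$ gives $\langle\gamma''(t),\eta(\gamma(t))\rangle=\lambda(t)\,r^2$, so it suffices to bound this inner product by $1$ in absolute value.

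The key step is to differentiate the identity $\langle\gamma'(t),\eta(\gamma(t))\rangle=0$, which holds for every $t$ because $\gamma'(t)\in T_{\gamma(t)}M$ while $\eta(\gamma(t))$ is normal. The composite $t\mapsto\eta(\gamma(t))$ is Lipschitz with constant at most ${\rm Lip}(\eta)\cdot\max_t\nrm{\gamma'(t)}\leq 1$, using ${\rm Lip}(\eta)\leq 1$ and the unit-speed parametrization; hence it is absolutely continuous with $\nrm{(\eta\circ\gamma)'(t)}\leq 1$ almost everywhere. Since a product of absolutely continuous functions obeys the product rule almost everywhere, differentiating the vanishing identity yields
$$\langle\gamma''(t),\eta(\gamma(t))\rangle=-\langle\gamma'(t),(\eta\circ\gamma)'(t)\rangle$$
for almost every $t$. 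Therefore $\Mod{\langle\gamma''(t),\eta(\gamma(t))\rangle}\leq\nrm{\gamma'(t)}\,\nrm{(\eta\circ\gamma)'(t)}\leq 1$, whence $\Mod{\lambda(t)}\,r^2\leq 1$ and $\nrm{\gamma''(t)}=\Mod{\lambda(t)}\,r\leq 1/r$ for almost every $t$, completing the argument.

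I expect the main obstacle to be the regularity bookkeeping that legitimizes the almost-everywhere differentiation: because $\eta$ is merely Lipschitz and not $C^1$, the composite $\eta\circ\gamma$ is only Lipschitz, so I must invoke Rademacher's theorem (Theorem~\ref{Rademacher}) together with the product rule for absolutely continuous functions, and verify that the points where $\gamma''$, $(\eta\circ\gamma)'$ and the product rule all hold form a set of full measure. The remaining geometric identifications---that $\eta(\gamma(t))$ spans the normal line and that $\gamma''\perp T_{\gamma(t)}M$ via Proposition~\ref{geodesic:char}---are routine once this regularity is secured.
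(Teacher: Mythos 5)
Your proposal is correct and follows essentially the same route as the paper: differentiate the identity $\langle \gamma'(t),\eta(\gamma(t))\rangle=0$ almost everywhere via Rademacher's theorem and the Leibniz rule, use Proposition~\ref{geodesic:char}(2) to identify $\gamma''(t)$ as a multiple of $\eta(\gamma(t))$, bound $\nrm{\gamma''(t)}\leq 1/r$ using ${\rm Lip}(\eta\circ\gamma)\leq 1$ and the unit-speed assumption, and integrate. The only cosmetic difference is that you introduce the coefficient $\lambda(t)$ explicitly, whereas the paper writes the resulting formula for $\gamma''(t)$ directly.
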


\bigskip

\begin{proof}
Since $\eta$ is a normal field we have
$\langle \gamma'(t),\eta(\gamma(t))\rangle=0$ for every $t$.
Both factors in this product are Lipschitz. By Theorem~\ref{Rademacher} (Rademacher's theorem)
these functions are differentiable
almost everywhere. Applying Leibnitz rule at points
where both these functions are differentiable we get
$ \langle \gamma''(t),\eta(\gamma(t))\rangle +
 \langle \gamma'(t),(\eta\circ\gamma)'(t)\rangle=0$.
 On the other hand, by Proposition~\ref{geodesic:char} we have that
 $\gamma''(t)$ is collinear with $\eta(\gamma(t))$ for almost every
 $t$. But since this orthogonality is  equivalent to the identity
 $\gamma''(t)=\frac{1}{r^2}\,
 \langle \gamma''(t),\eta(\gamma(t))\rangle  \, \eta(\gamma(t))$,
 it follows that
$$ \gamma''(t)= - \frac{1}{r^2}\,
 \langle \, \gamma'(t), \, (\eta\circ\gamma)'(t)\, \rangle  \; \eta(\gamma(t)) $$
 for almost every $t$.
 Now, by assumption $\nrm{\gamma'(t)}=1$ and
 $\nrm{\eta(\gamma(t))}=r$ for all $t$. Since ${\rm Lip}(\eta)\leq 1$,  we must have ${\rm Lip}(\eta\circ \gamma)\leq 1$, and hence
 $\nrm{(\eta\circ \gamma)'(t)}\leq 1$ for  almost every $t$.
Therefore
$$\nrm{\gamma''(t)}\leq \frac{1}{r^2}\,\nrm{\gamma'(t)}\,
\nrm{(\eta\circ \gamma)'(t)}\,\nrm{\eta(\gamma(t))}\leq \frac{1}{r}$$  
for  almost every $t$. Finally, by the mean value theorem
\begin{align*}
 \nrm{\gamma'(t)-\gamma'(t_0)} &= \nrm{\int_{t_0}^{t}
\gamma''(s)\,ds }
 \leq
\int_{t_0}^{t} \nrm{\gamma''(s)}\,ds \\
& \leq \int_{t_0}^{t} \frac{1}{r} \,ds
\leq \frac{1}{r}\,\Mod{t-t_0} \;,
\end{align*}
which proves that ${\rm Lip}( \gamma')\leq 1/r$.
\end{proof}

\section{A Sturm-Liouville Lemma}
\label{sec:6}
In this section we prove a Sturm-Liouville lemma that we will use, in the next section, in the proof of the ``if" part of Theorem~\ref{main}.

\bigskip

\begin{lema}\label{s1} Consider  a curve  $\gamma:[0,+\infty[\to \R^n$ of class $C^{1+{\rm Lip}}$
 such that
\begin{enumerate}
\item[$\mbox{\rm (a)}$] $\nrm {\gamma(0)}=r$,
\item[$\mbox{\rm (b)}$] $\langle \gamma(0),\gamma'(0)\rangle = 0$,
\item[$\mbox{\rm (c)}$] $\nrm{\gamma'(t)}=1$, for all $t\geq 0$,
\item[$\mbox{\rm (d)}$] ${\rm Lip}(\gamma')\leq 1/r$.
\end{enumerate}
Then for every $t\in [0,\pi\,r]$, \, $\nrm{\gamma(t)}\geq r$.
\end{lema}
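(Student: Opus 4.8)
The plan is to analyze the scalar function $f(t) := \nrm{\gamma(t)}^2 = \langle \gamma(t),\gamma(t)\rangle$ and show it stays at least $r^2$ on $[0,\pi r]$. Hypotheses (a) and (b) give $f(0)=r^2$ and $f'(0)=2\langle\gamma(0),\gamma'(0)\rangle=0$, so $t=0$ is a critical point at the value $r^2$; the goal is essentially to show $f$ cannot dip below this until $t$ exceeds $\pi r$. Since $\gamma$ is only of class $C^{1+{\rm Lip}}$, I would work with $f$ and its first derivative $f'(t)=2\langle\gamma(t),\gamma'(t)\rangle$, which is Lipschitz, and use the almost-everywhere second derivative provided by Rademacher's theorem (Theorem~\ref{Rademacher}) together with integration of the a.e. derivative rather than classical $C^2$ calculus.

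**The key differential inequality.**
The heart of the argument is a Sturm-Liouville type estimate. Where $\gamma''$ exists (a.e., by (d) and Rademacher), I compute
\begin{align*}
\tfrac{1}{2} f''(t) &= \langle \gamma'(t),\gamma'(t)\rangle + \langle \gamma(t),\gamma''(t)\rangle \\
&= 1 + \langle \gamma(t),\gamma''(t)\rangle \\
&\geq 1 - \nrm{\gamma(t)}\,\nrm{\gamma''(t)} \\
&\geq 1 - \tfrac{1}{r}\,\nrm{\gamma(t)},
\end{align*}
using (c) for the first term and (d), i.e.\ $\nrm{\gamma''(t)}\leq 1/r$ a.e., together with Cauchy-Schwarz for the second. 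Writing $g(t):=\nrm{\gamma(t)}$, the plan is to convert this into a comparison with the solution of $g''+\tfrac{1}{r}g=\tfrac{1}{r}$ wait—more precisely, one expects to compare $g$ against the function $r\cos(t/r)$, whose relevance is signalled by the endpoint $\pi r$ (a half-period of $\cos(t/r)$) and by the constant $\pi/2$ appearing in Theorem~\ref{main}. The cleanest route is probably to show directly that $g(t)\geq r$ fails only after a quarter/half period.

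**Comparison argument.**
Concretely, I would argue by contradiction: suppose $g(t_0)<r$ for some $t_0\in(0,\pi r]$, and let $t_1$ be the first time $g$ drops to some value below $r$. On the interval before the first descent, the inequality $\tfrac12 f''\geq 1-\tfrac1r g$ shows that whenever $g\leq r$ the function $f=g^2$ is convex-leaning upward ($f''\geq 0$), which already forces $g$ to stay at $r$ initially since $f(0)=r^2$, $f'(0)=0$. To get the sharp bound to $\pi r$ rather than something weaker, I expect to need the refined comparison: set $u(t)=\langle\gamma(t),\gamma'(t)\rangle=\tfrac12 f'(t)$, which is Lipschitz with $u(0)=0$, and note $u'(t)=1+\langle\gamma,\gamma''\rangle\geq 1-g/r$ a.e.; combined with a differential inequality for $g$ itself (via $g'=u/g$ where $g\neq 0$), one derives that $g(t)\geq r\cos(t/r)$ by Sturm comparison, and since $r\cos(t/r)$ is the extremal trajectory this is not yet $\geq r$—so the actual statement must exploit that $\langle\gamma,\gamma''\rangle$ cannot be as negative as the worst case allows while $g<r$.

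**Anticipated main obstacle.**
The hard part will be making the comparison rigorous at the level of $C^{1+{\rm Lip}}$ regularity: $\gamma''$ exists only almost everywhere, so I cannot invoke classical Sturm-Liouville theory directly and must instead integrate the a.e.\ inequalities for $u=\tfrac12 f'$ and for $g$, handling the set where $g$ might vanish and justifying the chain rule for $g=\sqrt{f}$ where $f>0$. I expect the cleanest formulation avoids $g$ altogether and works with the pair $(f,f')$: from $f(0)=r^2$, $f'(0)=0$, and the a.e.\ inequality $\tfrac12 f''\geq 1-\tfrac1r\sqrt{f}$, one bootstraps a lower bound for $f$ valid up to $t=\pi r$. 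Identifying the exact quantity that is monotone or has a sign-definite derivative—likely an energy-type combination of $f$ and $f'$ mimicking $g^2+r^2(g')^2$ for the comparison ODE—is where the real work lies, and getting the constant to come out to exactly $\pi r$ rather than a smaller time is the delicate point.
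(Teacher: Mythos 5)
Your setup coincides with the paper's: both analyses work with $\varphi(t)=\nrm{\gamma(t)}^2$, record $\varphi(0)=r^2$, $\varphi'(0)=0$, and derive the a.e.\ inequality $\varphi''(t)\geq 2\bigl(1-\tfrac1r\nrm{\gamma(t)}\bigr)$ from (c), (d), Rademacher and Cauchy--Schwarz. But the proposal stops exactly where the proof begins, and you say so yourself (``this is not yet $\geq r$'', ``getting the constant to come out to exactly $\pi r$ \dots is the delicate point''). As written there is a genuine gap: no argument is given that actually produces the time $\pi r$, and the candidate comparisons you float either give something weaker ($g\geq r\cos(t/r)$) or are left unidentified (the ``energy-type combination'').

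The three missing ingredients are concrete. First, linearize the right-hand side: by concavity, $\sqrt{x}\leq 1+\tfrac12(x-1)$, which turns $\varphi''\geq 2\bigl(1-\sqrt{\varphi/r^2}\bigr)$ into the \emph{linear} inequality $\varphi''\geq 1-\varphi/r^2$; setting $\psi=1-\varphi/r^2$ this reads $\psi''+\tfrac{1}{r^2}\psi\leq 0$, which is what makes Sturm comparison applicable despite the merely a.e.\ second derivative. Second --- and here your geometric picture is backwards --- this inequality together with $\psi(0)=0$, $\psi'(0)=0$ forces the curve \emph{outward}: $\psi\leq 0$, i.e.\ $\nrm{\gamma(t)}\geq r$, up to the first return time $b$ to the sphere. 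The content of the lemma is a lower bound $b\geq \pi r$ on this return time, not a bound on how far the curve can dip inside along a cosine; your worry about ``$\langle\gamma,\gamma''\rangle$ not being as negative as the worst case allows while $g<r$'' addresses a regime that never occurs on $[0,b]$. Third, the constant $\pi r$ comes from a \emph{two-sided} comparison: since $\varphi(0)=\varphi(b)=r^2$ there is a first interior critical point $t_0\in(0,b)$ with $\psi'(t_0)=0$ and $\psi(t_0)<0$, and comparing $\psi$ on each side of $t_0$ with $\tilde\psi(t)=\psi(t_0)\cos\bigl(\tfrac{t-t_0}{r}\bigr)$ shows that $\psi$ cannot vanish within $\pi r/2$ of $t_0$ in either direction; hence $t_0\geq \pi r/2$ and $b\geq t_0+\pi r/2\geq \pi r$. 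This splitting at the critical point is the device that replaces the monotone quantity you were searching for.
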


\begin{proof}
First define, for every $t \in [0,+\infty[$,
$$\varphi(t):=\langle \gamma(t),\gamma(t) \rangle = \nrm{\gamma(t)}^2.$$
We have that
$$\varphi'(t)=2 \langle \gamma(t), \gamma'(t) \rangle.$$
By Theorem~\ref{Rademacher} (Rademacher's theorem)
function $\gamma'$ is differentiable
almost everywhere. Furthermore, we have that $\nrm{\gamma''(t)} \leq 1/r$
for almost every $t$, because ${\rm Lip}(\gamma')\leq 1/r$.
Therefore, for almost every $t$, we can obtain that
$$
\begin{array}{rcl}
\varphi''(t) & = & 2 \,(\, \nrm{\gamma'(t)}^2 + \langle \gamma(t),\gamma''(t) \rangle) \\
{} & \geq & 2 \, (1 - \nrm{\gamma(t)}\nrm{\gamma''(t)}) \\
{} & \geq & 2 \, \left(1-\frac{\nrm{\gamma(t)}}{r}\right) \\
{} & = & 2 \, \left(1-\sqrt{\frac{\varphi(t)}{r^2}}\right) \\
{} & \geq & 2\,\left( 1-\left(1+\frac{1}{2}\left(\frac{\varphi(t)}{r^2}-1 \right) \right) \right) \\
{} & = & 1-\frac{\varphi(t)}{r^2}.
\end{array}
$$
Now define
$$\psi(t):= 1- \frac{\varphi(t)}{r^2}.$$
Derivating $\psi$ and using the above inequality we get that, for almost every $t$,
$$\psi''(t) = -\frac{1}{r^2} \varphi''(t) \leq -\frac{1}{r^2} \psi(t)$$
which is equivalent to
$$\psi''(t)+\frac{1}{r^2}\psi(t) \leq 0.$$
Moreover, it is clear that,
$$\psi(t) < 0 \Leftrightarrow \varphi(t) > r^2,$$
or equivalently,
$$\psi(t) < 0 \Leftrightarrow \nrm{\gamma(t)} > r.$$
Let $b>0$ be the first time $t>0$ such that $\nrm{\gamma(t)}=r$, with $b=+\infty$ if no such time
exists. Our goal is to show that $b\geq \pi\, r$,
something obvious if $b=+\infty$. 
Hence we assume that $b$ is finite.
By definition we have $\psi(t)<0$ for all $t\in (0,b)$.
Define on this interval
$$q(t) := \frac{\psi''(t)}{\psi(t)}+\displaystyle\frac{1}{r^2} \;. $$
Assumptions (c) and (d) imply that $\varphi''$, and hence $\psi''$,
are measurable bounded functions. Therefore $q(t)$ is locally integrable
on $(0,b)$. Clearly  $q(t)\geq 0$ for all $t\in (0,b)$ and
$$\psi''(t) + \left(\frac{1}{r^2} - q(t) \right) \psi(t)=0.$$
Since $\varphi(0)=\varphi(b)=r^2$ there is a first time $0<t_0<b$ such that
$\langle \gamma(t_0),\gamma'(t_0) \rangle =  \frac{1}{2} \varphi'(t_0)=0$. 
Clearly, $\psi(t_0)<0$ and $\psi'(t_0)=0$. 
By Sturm-Liouville theory, see~ \cite{DH},  we can compare the solution of the previous equation
with the solution of the problem
$$\left\{
\begin{array}{l}
\tilde{\psi}''(t) + \frac{1}{r^2} \tilde{\psi}(t)=0, \\
\tilde{\psi}(t_0)=\psi(t_0) \\
\tilde{\psi}'(t_0) = \psi'(t_0) \\
\end{array}
\right.
$$
which is given by $\tilde{\psi}(t)=\psi(t_0)\,\cos\left(\frac{t-t_0}{r}\right)$, to conclude that
$$\psi(t) \leq \tilde{\psi}(t),$$
while $\tilde{\psi}(t) <0$. Therefore, the first zero $b$ of $\psi$ satisfies
$$b  \geq t_0 + r \frac{\pi}{2}.$$
Now let, 
$$\psi_1(t):=\psi(-t).$$
Clearly, $\psi_1'(t)=-\psi'(-t)$ and $\psi_1''(t)=\psi''(-t)$.
Therefore, we can replace in the above differential equations, $\psi$ by $\psi_1$. 
Consequently, we can repeat the same arguments as before for $t \leq t_0$ to obtain that
$$0  \leq t_0 - r \frac{\pi}{2} .$$
Hence $b\geq \pi\, r$,
which completes the proof.
\end{proof}

\medskip

\section{Deriving Regularity }
\label{sec:7}
In this section we shall prove the `if' part of
 Theorem~\ref{main}, i.e., we prove the following proposition.

\begin{prop}\label{mainglobal}
Given an open set $U\subseteq\R^n$ assume  that
\begin{enumerate}
\item[$(1)$]\, there is $\eta:\partial U\to\R^n$
such that
\begin{enumerate}
\item[$\mbox{\rm (i)}$]\, $\eta$ is a normal vector field  along $\partial U$,
\item[$\mbox{\rm (ii)}$]\, $\nrm{\eta(x)}=r$, for every $x\in\partial U$,
\item[$\mbox{\rm (iii)}$]\, ${\rm Lip}(\eta)\leq 1$,
\end{enumerate}
\item[$(2)$]\, $d_{\partial U}(x,y) \geq \frac{\pi}{2}\, \nrm{x-y}$ implies $\nrm{x-y} \geq 2\,r$, for all $x,y \in \partial U$.
\end{enumerate}
Then $U$ is $r$-regular.
\end{prop}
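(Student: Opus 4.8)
The first task is to record the regularity of the boundary. By Proposition~\ref{fC1} and the subsequent remark, the function $f$ is of class $C^{1+{\rm Lip}}$ on $N_{r/2}(\partial U)$ with $Df_x(v)=\langle v,\eta(\pi x)\rangle$, so that $\nabla f(x)=\eta(\pi x)$; on $\partial U=f^{-1}(0)$ we have $\nabla f(x)=\eta(x)\neq 0$. Hence by Proposition~\ref{pre-image-C1lip}, $\partial U$ is a codimension-one manifold of class $C^{1+{\rm Lip}}$, and the geodesic theory of Section~\ref{sec:5} applies: any two points in one component of $\partial U$ are joined by a minimizing unit-speed geodesic, which by Proposition~\ref{geo:lip} satisfies ${\rm Lip}(\gamma')\leq 1/r$. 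Moreover Proposition~\ref{localnormal} gives that $U$ is locally $r$-regular, so at each $x\in\partial U$ the two spheres of radius $r$ tangent at $x$, centred at $x\mp\eta(x)$, bound balls $B^-(x)=B(x-\eta(x),r)$ and $B^+(x)=B(x+\eta(x),r)$, one meeting $U$ and the other meeting $\overline{U}^c$ in a punctured neighbourhood of $x$. After possibly replacing $\eta$ by $-\eta$ we arrange $B^-(x)\cap B(x,\varepsilon)\subseteq U$ and $B^+(x)\cap B(x,\varepsilon)\subseteq\overline{U}^c$. The plan is to first upgrade this \emph{local} statement to the \emph{global} claim that $B^-(x)\subseteq U$ and $B^+(x)\subseteq\overline{U}^c$, and then to cover $U$ and $\overline{U}^c$ by balls of radius $r$.

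The key step is to show that each osculating ball is globally disjoint from $\partial U$. Fix $x\in\partial U$, let $c=x-\eta(x)$ be the centre of $B^-(x)$, and suppose for contradiction that some $y\in\partial U$ lies in the open ball $B(c,r)$. Then $\nrm{x-y}\leq\nrm{x-c}+\nrm{c-y}<r+r=2r$, so by the contrapositive of hypothesis $(2)$ we get $d_{\partial U}(x,y)<\frac{\pi}{2}\,\nrm{x-y}<\pi\,r$; in particular $x$ and $y$ lie in the same component of $\partial U$, since otherwise $d_{\partial U}(x,y)=+\infty$ and $(2)$ would force $\nrm{x-y}\geq 2r$. Let $\gamma:[0,\ell]\to\partial U$ be a minimizing unit-speed geodesic from $x$ to $y$; as the intrinsic metric coincides with the length metric (see~\cite{BBI}), $\ell=d_{\partial U}(x,y)<\pi\,r$. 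Put $\tilde\gamma(t)=\gamma(t)-c$. Then $\nrm{\tilde\gamma(0)}=\nrm{x-c}=\nrm{\eta(x)}=r$; the vector $\gamma'(0)$ is tangent to $\partial U$ at $x$ while $\tilde\gamma(0)=\eta(x)$ is the (gradient) normal, so $\langle\tilde\gamma(0),\tilde\gamma'(0)\rangle=\langle\eta(x),\gamma'(0)\rangle=0$; $\nrm{\tilde\gamma'}\equiv 1$; and ${\rm Lip}(\tilde\gamma')={\rm Lip}(\gamma')\leq 1/r$ by Proposition~\ref{geo:lip}. Extending $\gamma$ beyond $\ell$ by its tangent line (which keeps the curve of class $C^{1+{\rm Lip}}$, unit speed, and ${\rm Lip}(\gamma')\leq 1/r$), Lemma~\ref{s1} yields $\nrm{\gamma(t)-c}\geq r$ for all $t\in[0,\pi r]$, whence $\nrm{y-c}=\nrm{\gamma(\ell)-c}\geq r$, contradicting $y\in B(c,r)$. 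The same argument applied to $B^+(x)$ shows it too avoids $\partial U$. Being connected, disjoint from $\partial U$, and locally on the correct side, we conclude $B^-(x)\subseteq U$ and $B^+(x)\subseteq\overline{U}^c$ for every $x\in\partial U$.

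It remains to exhibit $U$ and $\overline{U}^c$ as unions of balls of radius $r$. Let $z\in U$ and $d=d(z,\partial U)>0$. If $d\geq r$, then $B(z,r)$ is disjoint from $\partial U$, connected, and contains $z$, so $B(z,r)\subseteq U$. If $0<d<r$, choose a nearest boundary point $x\in\partial U$ (minimizing $\nrm{z-\cdot}$ over the compact set $\partial U\cap\overline{B(z,r)}$); since $x$ is nearest and $\partial U$ is $C^1$, $z-x$ is normal to $\partial U$, so $z=x\pm\frac{d}{r}\eta(x)$, and because $x+\frac{d}{r}\eta(x)\in B^+(x)\subseteq\overline{U}^c$ the inward sign must hold, giving $z=x-\frac{d}{r}\eta(x)$ with $\nrm{z-(x-\eta(x))}=r-d<r$, i.e. $z\in B^-(x)\subseteq U$. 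In either case $z$ lies in a ball of radius $r$ contained in $U$, so $U$ is a union of balls of radius $r$. Applying the entire argument to the open set $\overline{U}^c$, whose boundary is again $\partial U$ and which carries the normal field $-\eta$ (still satisfying (i)--(iii), with the same $d_{\partial U}$ so that $(2)$ persists), shows likewise that $\overline{U}^c$ is a union of balls of radius $r$. Since $U$ and $\overline{U}^c$ are connected, both lie in $\Uscr_r$, and therefore $U$ is $r$-regular. The step I expect to be most delicate is the global disjointness claim: one must couple the metric hypothesis $(2)$ with the curvature bound ${\rm Lip}(\gamma')\leq 1/r$ through the minimizing geodesic and verify that the translated curve meets all four hypotheses of Lemma~\ref{s1}; identifying $d_{\partial U}$ with the geodesic length and dispatching the distinct-component case are the subsidiary points requiring care.
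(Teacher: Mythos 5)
Your proposal is correct and follows essentially the same route as the paper's own proof: local $r$-regularity from Proposition~\ref{localnormal}, then the global step by supposing a boundary point $y$ lies in one of the tangent balls at $x$, taking a minimizing unit geodesic, and deriving a contradiction from Proposition~\ref{geo:lip}, Lemma~\ref{s1} and hypothesis $(2)$. You in fact supply several details the paper leaves implicit --- the $C^{1+{\rm Lip}}$ manifold structure of $\partial U$ needed for the geodesic theory, the case where $x$ and $y$ lie in different components, the extension of the geodesic so that Lemma~\ref{s1} applies on $[0,\pi r]$, and the final covering of $U$ and $\overline{U}^c$ by balls of radius $r$ --- so no gap.
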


\begin{proof}
By Proposition~\ref{localnormal}, $U$ is locally $r$-regular. Therefore, we are left to prove that 
the level set $\partial U=f^{-1}(0)$ is globally bounded between two  balls of radius $r$
tangent at $x$, for any $x \in \partial U$.
Let $\eta:\partial U\to\R^n$ be such that conditions $(1)$ $\mbox{\rm (i)}$, $\mbox{\rm (ii)}$ and $\mbox{\rm (iii)}$ are satisfied.
Suppose that $\eta$ points outward $U$.
We want to show that $B\left(x + \eta(x),r\right) \subseteq \overline{U}^c$ and $B\left(x - \eta(x),r\right) \subseteq U$.
We shall prove the first inclusion, since the proof of the second one is analogous. 
Suppose, by contradiction, that there exists $y \in \partial U$ such that $y \in  B\left(x + \eta(x),r\right)$.
Let $\gamma:[0,\ell] \to \R^n$ be a minimizing unit geodesic from $x$ to $y$.
By Proposition~\ref{geo:lip}, ${\rm Lip}( \gamma')\leq 1/r$.
Therefore, it follows from Lemma~\ref{s1} that
$$d_{\partial U}(x,y) = \ell \geq \pi r \geq \frac{\pi}{2}\nrm{x-y}.$$
Consequently, condition $(2)$ implies that $\nrm{x-y} \geq 2r$,
which is a contradiction.

\end{proof}

\section{Global Characterization of Regularity }
\label{sec:8}

To finish the proof of Theorem~\ref{main} we are left to prove the following proposition.

\begin{prop}\label{intrinsic}
Let $U\subseteq\R^n$ be an $r$-regular set.
Then we have that
$$d_{\partial U}(x,y) \geq \frac{\pi}{2}\, \nrm{x-y} \hspace{0.2cm} \mbox{\rm implies} \hspace{0.2cm}  \nrm{x-y} \geq 2\,r,$$
for all $x,y \in \partial U.$
\end{prop}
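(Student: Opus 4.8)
The plan is to realize the intrinsic distance by a length-minimizing curve and to compare that curve with a circular arc of radius $r$. Since $U$ is $r$-regular, its boundary is a connected $C^{1+\Lip}$ hypersurface carrying the normal field $\eta$ of Proposition~\ref{localmain}; hence for $x\neq y$ in $\partial U$ Proposition~\ref{geodesic:char} furnishes a minimizing unit geodesic $\gamma:[0,\ell]\to\partial U$ from $x$ to $y$ with $d_{\partial U}(x,y)=\ell$, and Proposition~\ref{geo:lip} gives $\Lip(\gamma')\le 1/r$, i.e. $\nrm{\gamma''(t)}\le 1/r$ for almost every $t$. Everything will follow from the chord--arc (Schur-type) inequality
\[
\nrm{x-y}\ \ge\ 2r\,\sin\!\left(\frac{\ell}{2r}\right),\qquad\text{valid whenever } \ell\le\pi r .
\]

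To prove this I would project the chord onto the midpoint velocity $\gamma'(\ell/2)$. Since $\nrm{\gamma'}\equiv 1$ and $\nrm{\gamma''}\le 1/r$, the angle between $\gamma'(t)$ and $\gamma'(\ell/2)$ is at most $\bigl|\int_{\ell/2}^{t}\nrm{\gamma''(s)}\,ds\bigr|\le |t-\ell/2|/r\le \ell/(2r)\le\pi/2$; thus $\langle\gamma'(t),\gamma'(\ell/2)\rangle\ge\cos(|t-\ell/2|/r)\ge 0$, and
\[
\nrm{x-y}\ \ge\ \Big\langle \int_0^\ell\gamma'(t)\,dt,\ \gamma'(\tfrac{\ell}{2})\Big\rangle\ \ge\ \int_0^\ell\cos\!\Big(\frac{|t-\ell/2|}{r}\Big)\,dt\ =\ 2r\,\sin\!\left(\frac{\ell}{2r}\right).
\]

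With this inequality I finish as follows. Assume $\ell=d_{\partial U}(x,y)\ge\frac\pi2\nrm{x-y}$ and suppose first $\ell\le\pi r$. Writing $u=\ell/(2r)\in(0,\pi/2]$, the chord--arc bound gives $\sin u\le\nrm{x-y}/(2r)$, while the hypothesis gives $\nrm{x-y}/(2r)\le \ell/(\pi r)=\tfrac2\pi u$; hence $\sin u\le\tfrac2\pi u$. But $\sin$ is concave on $[0,\pi/2]$, so $\sin u\ge\tfrac2\pi u$ there, with equality only at $u=0,\pi/2$; since $u>0$ this forces $u=\pi/2$, whence $\ell=\pi r$ and $\nrm{x-y}\ge 2r\sin(\pi/2)=2r$, as claimed. (Equivalently, the bound rearranges to $\ell\le 2r\arcsin(\nrm{x-y}/(2r))<\tfrac\pi2\nrm{x-y}$ whenever $\nrm{x-y}<2r$, so the standing hypothesis already fails in that case.)

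The main obstacle is the remaining range $\ell>\pi r$, where the midpoint projection breaks down because the angles exceed $\pi/2$; here minimality of $\gamma$ must be used in an essential way. Applying the chord--arc bound to the sub-arc $\gamma|_{[0,\pi r]}$, which is itself minimizing, the intermediate point $z=\gamma(\pi r)$ satisfies $\nrm{x-z}\ge 2r$ while $d_{\partial U}(x,z)=\pi r$. I would then argue that a minimizing geodesic joining points less than $2r$ apart cannot reach length $\pi r$: once $\gamma$ attains Euclidean distance $2r$ from $x$, the global inner and outer tangent balls $B(x\mp\eta(x),r)$, which by $r$-regularity are contained in $U$ and $\overline U^{\,c}$ and are therefore avoided by all of $\partial U$, prevent the curve from returning to within $2r$ of $x$ without admitting a strictly shorter competitor, contradicting $\ell=d_{\partial U}(x,y)$. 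Making this ``no return'' step precise, through Lemma~\ref{s1} applied along $\gamma$ together with the avoidance of the tangent balls, is the delicate part of the argument; it yields $\nrm{x-y}\ge 2r$ in this case as well and completes the proof (the degenerate case $x=y$ being understood as excluded).
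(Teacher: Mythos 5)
Your chord--arc half is correct and genuinely different from the paper's argument, but the proof has a real gap exactly where you flag it: the case $\ell=d_{\partial U}(x,y)>\pi r$ with $\nrm{x-y}<2r$. The ``no return'' step is not a proof. Both tangent balls $B(x\pm\eta(x),r)$ are contained in $B(x,2r)$, and a point $y$ with $\nrm{x-y}<2r$ need not lie in either of them (take $y$ near the equator of $\partial B(x,2r)$ relative to the direction $\eta(x)$), so the fact that $\partial U$ avoids those two balls places no obstruction on the geodesic returning to within $2r$ of $x$; nor do you actually exhibit the ``strictly shorter competitor'' whose existence would contradict minimality. What this case requires is an \emph{upper} bound on $d_{\partial U}(x,y)$ in terms of $\nrm{x-y}$, and your method --- which only bounds the chord from below by a function of the arc length --- cannot produce one. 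The configuration you must exclude (boundary points Euclidean-close but intrinsically far) is precisely the thin-neck example of the paper's Figure 2, which satisfies all the local hypotheses (1)(i)--(iii); ruling it out genuinely uses the disjointness of inner and outer tangent balls at \emph{distinct} boundary points, not just the pair at $x$.

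That is exactly what the paper's proof supplies, by a route entirely different from yours: for $\nrm{x-y}<2r$ it constructs a short connecting curve in $\partial U$ directly, by repeated bisection. The segment $[x,y]$ lies in $N_r$, its nearest-point projection gives a connected subset of $\partial U$ inside the ball of diameter $[x,y]$, and a point $z\in\partial U$ on the bisecting hyperplane is pinned near $\frac{x+y}{2}$ by the Six Ball Lemma (applied to the tangent-ball pairs at $x$, $y$ and $z$), yielding $\nrm{z-x},\nrm{z-y}\leq\psi(\nrm{x-y})$ with $\psi(s)=\sqrt{2r^2-2r\sqrt{r^2-s^2/4}}$. Iterating gives polygonal curves of length $2^n\psi^n(\nrm{x-y})\to 2r\arcsin\!\left(\frac{\nrm{x-y}}{2r}\right)<\frac{\pi}{2}\nrm{x-y}$, which bounds $d_{\partial U}(x,y)$ from above and proves the contrapositive in one stroke, with no case split on $\ell$. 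Your Schur-type comparison along the minimizing geodesic is a clean alternative derivation of the same constant when $\ell\leq\pi r$, but as written the complementary case is unproved, so the argument does not close.
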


\begin{lema}
Let $U\subseteq\R^n$ be an $r$-regular set.
The projection $\pi:N_r\to\partial U$ that to each point $x\in N_r$ associates
the nearest point $\pi(x)\in\partial U$ 
is continuous.
\end{lema}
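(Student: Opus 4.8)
The plan is to first verify that $\pi$ is well defined on the whole of $N_r$ and then to obtain continuity from a soft compactness argument resting on uniqueness of the nearest point. Since $U$ is $r$-regular, Proposition~\ref{localmain} supplies a normal field $\eta$ with $\nrm{\eta}=r$ and ${\rm Lip}(\eta)\le 1$, and $\partial U$ is a codimension-one $C^{1+{\rm Lip}}$ manifold (cf.\ Proposition~\ref{pre-image-C1lip} and~\cite{DT2}); these are the two inputs I would use.

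First I would settle well-definedness. Existence of a nearest point is routine: for fixed $x\in N_r$ put $\delta:=d(x,\partial U)<r$; a minimizing sequence in the closed set $\partial U$ is bounded, hence subconverges to some $p\in\partial U$ with $\nrm{x-p}=\delta$. For uniqueness I would re-run the computation in the proof of Lemma~\ref{projection}, observing that it only needed $\delta<r$ (not $\delta<r/2$) to force a \emph{strict} contradiction. Indeed, since $\partial U$ is a $C^1$ hypersurface and $p$ is a nearest point, the direction $x-p$ is orthogonal to $T_p\partial U$, hence collinear with the one-dimensional normal direction $\eta(p)$, so $\eta(p)=\pm\frac{r}{\delta}(x-p)$ with a common sign determined by the side of $\partial U$ on which the fixed point $x$ lies. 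If $p_1\ne p_2$ were both nearest, then $$\nrm{\eta(p_1)-\eta(p_2)}=\frac{r}{\delta}\,\nrm{p_1-p_2}>\nrm{p_1-p_2},$$ because $\delta<r$, contradicting ${\rm Lip}(\eta)\le 1$; thus $\pi$ is single-valued on $N_r$.

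With $\pi$ well defined, continuity follows by the standard subsequence argument. Given $x_n\to x$ in $N_r$, the points $p_n=\pi(x_n)$ satisfy $\nrm{p_n}\le\nrm{x_n}+d(x_n,\partial U)\le\nrm{x_n}+r$, so $\{p_n\}$ is bounded; let $p^\ast$ be any subsequential limit along $n_k$. Since $\partial U$ is closed, $p^\ast\in\partial U$, and since $x\mapsto d(x,\partial U)$ is $1$-Lipschitz, $$\nrm{x-p^\ast}=\lim_k\nrm{x_{n_k}-p_{n_k}}=\lim_k d(x_{n_k},\partial U)=d(x,\partial U).$$ Hence $p^\ast$ is a nearest point of $x$, and as $x\in N_r$ has $d(x,\partial U)<r$, the uniqueness just proved gives $p^\ast=\pi(x)$. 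Because every subsequential limit of the bounded sequence $\{p_n\}$ equals $\pi(x)$, the whole sequence converges to $\pi(x)$, which is continuity.

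The main obstacle is establishing well-definedness on the full neighbourhood $N_r$ rather than on the smaller $N_{r/2}$ of Lemma~\ref{projection}: the naive estimate degrades as $d(x,\partial U)\to r$, so one genuinely needs that for an $r$-regular set $\partial U$ is a $C^1$ hypersurface with a one-dimensional normal line, which is exactly what pins $x-p$ to the direction $\eta(p)$ and keeps the contraction factor $\delta/r$ strictly below $1$. Once uniqueness holds up to the optimal radius $r$, the continuity step is soft and needs no quantitative control; if more were wanted, the collinearity identity $x=\pi(x)+\lambda\,\eta(\pi(x))$ with $\lambda=\pm\,d(x,\partial U)/r$ could be leveraged as in Lemma~\ref{lipschitz} to upgrade to local Lipschitz continuity, but only continuity is claimed here.
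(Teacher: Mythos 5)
Your proof is correct and its continuity argument is essentially the paper's own: extract a convergent subsequence of nearest points, show its limit is again a nearest point of the limit point, and conclude by uniqueness of the projection. The only difference is that you establish well-definedness of $\pi$ on all of $N_r$ directly (correctly extending the contraction argument of Lemma~\ref{projection} from $N_{r/2}$ to $N_r$ by using the collinearity of $x-\pi(x)$ with $\eta(\pi(x))$ and the common sign), whereas the paper simply cites~\cite[Lemma 5]{DT2} for this step.
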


\begin{proof}
 The minimizing projection $\pi$ is well defined by~\cite[Lemma 5]{DT2}. Consider any sequence $x_n \in N_r$ such that $x_n \rightarrow x$ with $x \in N_r$ and suppose, by contradiction, that $\pi(x_n) \not\rightarrow  \pi(x)$. Therefore, there exists some subsequence $x_{n_{k}}$ such that
$\pi(x_{n_{k}}) \rightarrow  y$ with $y\in\partial U$ and $y \neq \pi(x)$. But then
$$\displaystyle{\nrm{x-\pi(x)} = \lim_n \nrm{x_{n_{k}} - \pi(x_{n_{k}})}=\nrm{x-y}},$$
which implies that $\pi(x)=y$.

\end{proof}

\begin{lema} Let $U\subseteq\R^n$ be an $r$-regular set.
Given $x,y\in\partial U$ such that $\nrm{x-y}<2r$, if $B$ denotes the closed ball
with diameter $[x,y]$ then the subspace
$\partial U  \cap B$ is connected.
\end{lema}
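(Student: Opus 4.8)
The plan is to show that $\partial U\cap B$ is path-connected by exhibiting, inside $\partial U\cap B$, a canonical path from an arbitrary point to a fixed basepoint. Write $c=\frac{x+y}{2}$ for the center of $B$ and $\rho=\frac{1}{2}\,\nrm{x-y}<r$ for its radius, so that $B$ is the closed ball $\{z:\nrm{z-c}\le\rho\}$. Since $d(c,\partial U)\le\nrm{c-x}=\rho<r$, the center lies in $N_r$, and we may set $p_0:=\pi(c)$; moreover $\nrm{p_0-c}=d(c,\partial U)\le\rho$, so $p_0\in\partial U\cap B$. This point $p_0$ will serve as the basepoint of the retraction.

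Given any $p\in\partial U\cap B$, I would consider the Euclidean segment $[p,c]$, parametrized by $z(t)=(1-t)\,p+t\,c$ for $t\in[0,1]$. Because $p\in\partial U$, for each $t$ we have $d(z(t),\partial U)\le\nrm{z(t)-p}=t\,\nrm{c-p}\le\rho<r$, so the whole segment lies in $N_r$. Hence, using that $\pi:N_r\to\partial U$ is well defined (by \cite[Lemma 5]{DT2}) and continuous (by the previous lemma), the formula $\sigma(t):=\pi(z(t))$ defines a continuous curve in $\partial U$ with $\sigma(0)=\pi(p)=p$ and $\sigma(1)=\pi(c)=p_0$.

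The key step is that this curve never leaves $B$. Indeed, for every $t$ one has $\nrm{\sigma(t)-z(t)}=d(z(t),\partial U)\le\nrm{z(t)-p}=t\,\nrm{c-p}$, while $\nrm{z(t)-c}=(1-t)\,\nrm{p-c}$, so by the triangle inequality $\nrm{\sigma(t)-c}\le t\,\nrm{p-c}+(1-t)\,\nrm{p-c}=\nrm{p-c}\le\rho$. Thus $\sigma(t)\in\partial U\cap B$ for all $t$, and $\sigma$ is a path in $\partial U\cap B$ joining $p$ to $p_0$. Since $p$ was arbitrary, every point of $\partial U\cap B$ is joined to $p_0$ within $\partial U\cap B$, so the set is path-connected, and therefore connected.

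The only delicate point is precisely this containment $\sigma(t)\in B$: the crude estimate $\nrm{\sigma(t)-c}\le d(z(t),\partial U)+\nrm{z(t)-c}$ a priori only places the projected path in the larger ball $\overline{B}(c,2\rho)$, so one might fear it escapes $B$. What saves the argument is that, along the segment $[p,c]$, the bound on $d(z(t),\partial U)$ and the bound on $\nrm{z(t)-c}$ are complementary, their sum collapsing to the single quantity $\nrm{p-c}\le\rho$. It is worth noting that a naive fixed-direction ``graph'' argument cannot replace this: once $\rho\ge r/2$ the normals over $\partial U\cap B$ need not fit in a half-space (for a round ball of radius $r$ they may sweep out a full hemisphere as $\rho\to r$), whereas the projection-to-$p_0$ retraction above works for the entire range $\nrm{x-y}<2r$.
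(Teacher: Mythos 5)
Your proof is correct, and it takes a route that is recognizably in the same spirit as the paper's but is genuinely different in what it establishes. The paper projects the single chord $[x,y]$ into $\partial U$: it checks $[x,y]\subseteq N_r$, uses the continuity of $\pi$, and bounds $\nrm{\pi(z)-p}\leq\nrm{\pi(z)-z}+\nrm{z-p}$ by the collapsing sum $\nrm{x-z}+\nrm{z-p}=\nrm{x-p}$ (splitting into the two halves of the chord). This yields a connected subset of $\partial U\cap B$ containing both $x$ and $y$ --- which is what the subsequent intermediate-value argument in Corollary~\ref{partialU} actually needs, but is literally weaker than the stated conclusion that $\partial U\cap B$ is connected. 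Your version applies the same two ingredients (well-definedness of $\pi$ on $N_r$ from \cite[Lemma 5]{DT2} and its continuity from the preceding lemma, plus the same telescoping triangle-inequality trick $t\,\nrm{p-c}+(1-t)\,\nrm{p-c}=\nrm{p-c}$) to the radial segments $[p,c]$ from an arbitrary point $p\in\partial U\cap B$ to the centre, producing a path in $\partial U\cap B$ from $p$ to the common basepoint $\pi(c)$. That buys you path-connectedness of the whole set $\partial U\cap B$, i.e.\ the lemma exactly as stated, at no extra cost; the paper's choice buys a slightly shorter computation and directly exhibits the connected curve joining $x$ to $y$ that the corollary uses. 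All your estimates check out ($d(z(t),\partial U)\leq t\,\nrm{c-p}\leq\rho<r$ keeps the segment in $N_r$, and $\pi(p)=p$ for $p\in\partial U$), so there is no gap.
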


\begin{proof}
Take $x,y\in \partial U$ such that $\nrm{x-y}<2r$.
We claim that $[x,y]\subseteq N_r$.
Given $z\in [x,y]$, assume $\nrm{x-z}\leq \nrm{y-z}$.
Then
$$ d(z,\partial U)\leq \nrm{x-z} \leq \frac{1}{2}\nrm{x-y} < r\;, $$
which proves that $z\in N_r$. Otherwise, if $\nrm{x-z}\geq \nrm{y-z}$
then
$$ d(z,\partial U)\leq  \nrm{y-z} \leq \frac{1}{2}\nrm{x-y} < r\;, $$
which again proves that $z\in N_r$. Hence $[x,y]\subseteq N_r$,
and $\pi([x,y])$ is a connected curve joining $x$ and $y$ in $\partial U$.
To finish we just need to show that $\pi([x,y])\subseteq B$. Let $p=\frac{x+y}{2}$ be the
centre of $B$ and notice that $B$ has radius $r=\nrm{x-p}=\nrm{y-p}$.
Given $z\in [x,y]$, assume $\nrm{x-z}\leq \nrm{y-z}$.
Then
$$
\begin{array}{rcl}
\nrm{\pi(z)- p}  & \leq & \nrm{\pi(z)-z}+\nrm{z-p}  \\
{} & \leq & \nrm{x-z}+\nrm{z-p} \\
{} & = & \nrm{x-p}\;,
\end{array}
$$
which proves that $\pi(z)\in B$. 
Otherwise, if $\nrm{x-z}\geq \nrm{y-z}$
then
$$ 
\begin{array}{rcl}
\nrm{\pi(z)-p}  & \leq & \nrm{\pi(z)-z}+\nrm{z-p}  \\
{} & \leq & \nrm{y-z}+\nrm{z-p} \\
{} & = & \nrm{y-p}\;,
\end{array}
$$
which again proves that $\pi(z)\in B$. 

\end{proof}

\bigskip

\begin{coro}\label{partialU}
 Let $U\subseteq\R^n$ be an $r$-regular set.
Given $ x,y \in \partial U$ such that   $0<\nrm{x-y}<2r$
there is $z\in \partial U$  satisfying 
\begin{enumerate}
\item[$\mbox{\rm (a)}$]\,
$\langle z-\frac{x+y}{2},  y-x \rangle =0$,
\item[$\mbox{\rm (b)}$]\,   $\nrm{z-\frac{x+y}{2}}\leq \frac{1}{2}\nrm{x-y}$,
\item[$\mbox{\rm (c)}$]\,  the line through $z$ normal to $\partial U$ is contained in the plane determined
by the points $x$, $y$ and $z$.
\end{enumerate}
\end{coro}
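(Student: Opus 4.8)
The plan is to obtain conditions (a) and (b) by a connectedness/intermediate-value argument, and then to single out $z$ by a closest-point (variational) construction that forces the planarity in (c).

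First I would write $m = \frac{x+y}{2}$ and $u = y-x$, and let $H = \{\, w\in\R^n : \langle w-m, u\rangle = 0\,\}$ be the perpendicular bisector hyperplane of $[x,y]$; note that condition (a) is exactly the requirement $z\in H$. The affine function $g(w) = \langle w-m, u\rangle$ satisfies $g(x) = -\tfrac12\nrm{x-y}^2 < 0$ and $g(y) = \tfrac12\nrm{x-y}^2 > 0$, while $x,y\in\partial U\cap B$ (they are the endpoints of the diameter of $B$). Since $0<\nrm{x-y} < 2r$, the preceding lemma guarantees that $\partial U\cap B$ is connected, so $g$ assumes both signs on this set and must vanish somewhere: there is $z_0\in\partial U\cap B$ with $z_0\in H$. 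As $z_0\in B$ we get $\nrm{z_0 - m}\leq \tfrac12\nrm{x-y}$; in particular $\partial U\cap H\neq\emptyset$.

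To pin down a point satisfying (c) I would choose $z$ to \emph{minimize} $\nrm{w-m}$ over $w\in\partial U\cap H$. The minimum is attained, since the sublevel set $\{\, w\in\partial U\cap H : \nrm{w-m}\leq \tfrac12\nrm{x-y}\,\}$ is compact and nonempty (it contains $z_0$), and points outside it are strictly farther from $m$, so its minimizer $z$ is a global minimizer over $\partial U\cap H$. By construction $z\in H$, giving (a), and $\nrm{z-m}\leq\nrm{z_0-m}\leq\tfrac12\nrm{x-y}$, giving (b). Because $U$ is $r$-regular, $\partial U$ is a codimension-one $C^{1+\mathrm{Lip}}$ hypersurface carrying the continuous normal field $\eta$ of constant norm $r$ from Proposition~\ref{localmain}; where $\eta(z)$ is not parallel to $u$, the intersection $\partial U\cap H$ is a $C^1$ manifold near $z$ whose normal space is $\mathrm{span}\{\eta(z),u\}$. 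The first-order condition at the constrained minimum $z$ then reads $z - m \in \mathrm{span}\{\eta(z),u\}$, say $z-m = \lambda\,\eta(z) + \mu\,u$. Pairing with $u$ and using $z-m\perp u$ (because $z\in H$) eliminates $\mu$; provided $z\neq m$ this yields $\eta(z)\in\mathrm{span}\{z-m,u\}$, which is precisely the direction space of the unique plane through $x$, $y$, $z$. Hence the normal line $z+\R\,\eta(z)$ lies in that plane, which is (c).

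The step I expect to be the main obstacle is the planarity conclusion (c), and in particular the degenerate configuration $z=m$ (equivalently $\eta(z)\parallel u$), in which $\partial U\cap H$ fails to be transverse at $z$ and the first-order condition becomes vacuous. This case genuinely occurs — for a half-space the minimizer is forced to be the midpoint — and there $x$, $y$, $z$ are collinear, so ``the plane determined by $x$, $y$, $z$'' must be read as a plane containing these three points; one then takes the $2$-flat $m+\mathrm{span}\{u,\eta(m)\}$ (or any plane through the line $xy$ when $\eta(m)\parallel u$), which contains $x$, $y$, $z$ and the normal line, so (c) still holds. Apart from this, the only care required is to justify the first-order (Lagrange) condition in the merely $C^{1+\mathrm{Lip}}$ category: this is legitimate because $\partial U$ is genuinely $C^1$, the constraint $H$ is affine, and the objective $\nrm{w-m}^2$ is smooth, so the standard necessary condition at a constrained minimum applies. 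The connectedness input and the existence of the nearest point are then routine.
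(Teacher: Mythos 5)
Your proposal is correct and follows essentially the same route as the paper: an intermediate-value/connectedness argument on the bisector hyperplane $H$ to produce a point of $\partial U\cap H$ in the ball, followed by minimizing the distance to the midpoint within $\partial U\cap H$ and reading off the planarity of the normal from the first-order condition at the minimizer (the paper states this as ``the normal to $\partial U\cap H$ at $z$ is the orthogonal projection onto $H$ of the normal to $\partial U$ at $z$,'' which is your Lagrange condition in different words). Your explicit treatment of the degenerate cases $z=m$ and $\eta(z)\parallel y-x$ is a point of care that the paper's own proof silently omits.
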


\begin{proof}
Let $\delta=\frac{1}{2}\nrm{x-y}$ and consider the continuous function
$f:\partial U\cap \overline{B}_{\delta}(\frac{x+y}{2})\to\R$ defined by 
 $f(z)=\langle z-\frac{x+y}{2},  y-x \rangle$.
Remark that $f(x)<0$ and $f(y)>0$.
Thus, since $\partial U\cap \overline{B}_\delta(\frac{x+y}{2})$
is connected, $f(z)=0$  for some
$z\in \partial U\cap \overline{B}_\delta(\frac{x+y}{2})$.
Let $H$ denote the hyperplane $\{\, z\in\R^n\,\colon\, f(z)=0\,\}$.
We know there is at least one point $z\in\partial U\cap H$ satisfying $\nrm{z-\frac{x+y}{2}}\leq \delta$.
Hence we can pick such a point $z$ minimizing the distance $\nrm{z-\frac{x+y}{2}}$.
It follows that the vector $z-\frac{x+y}{2}$ is orthogonal to $\partial U\cap H$ at $z$.
Since the normal to $\partial U\cap H$ at $z$ is the orthogonal projection onto $H$ of the normal
 to $\partial U$ at $z$, this normal direction to $\partial U$ at $z$ lies in the plane
 spanned by $y-x$ and  $z-\frac{x+y}{2}$. This implies that the  normal line to $\partial U$ through $z$ is contained in the plane determined by the points $x$, $y$ and $z$.
 
\end{proof}

\begin{figure}[h]
\begin{center}
\includegraphics[width=0.5\textwidth]{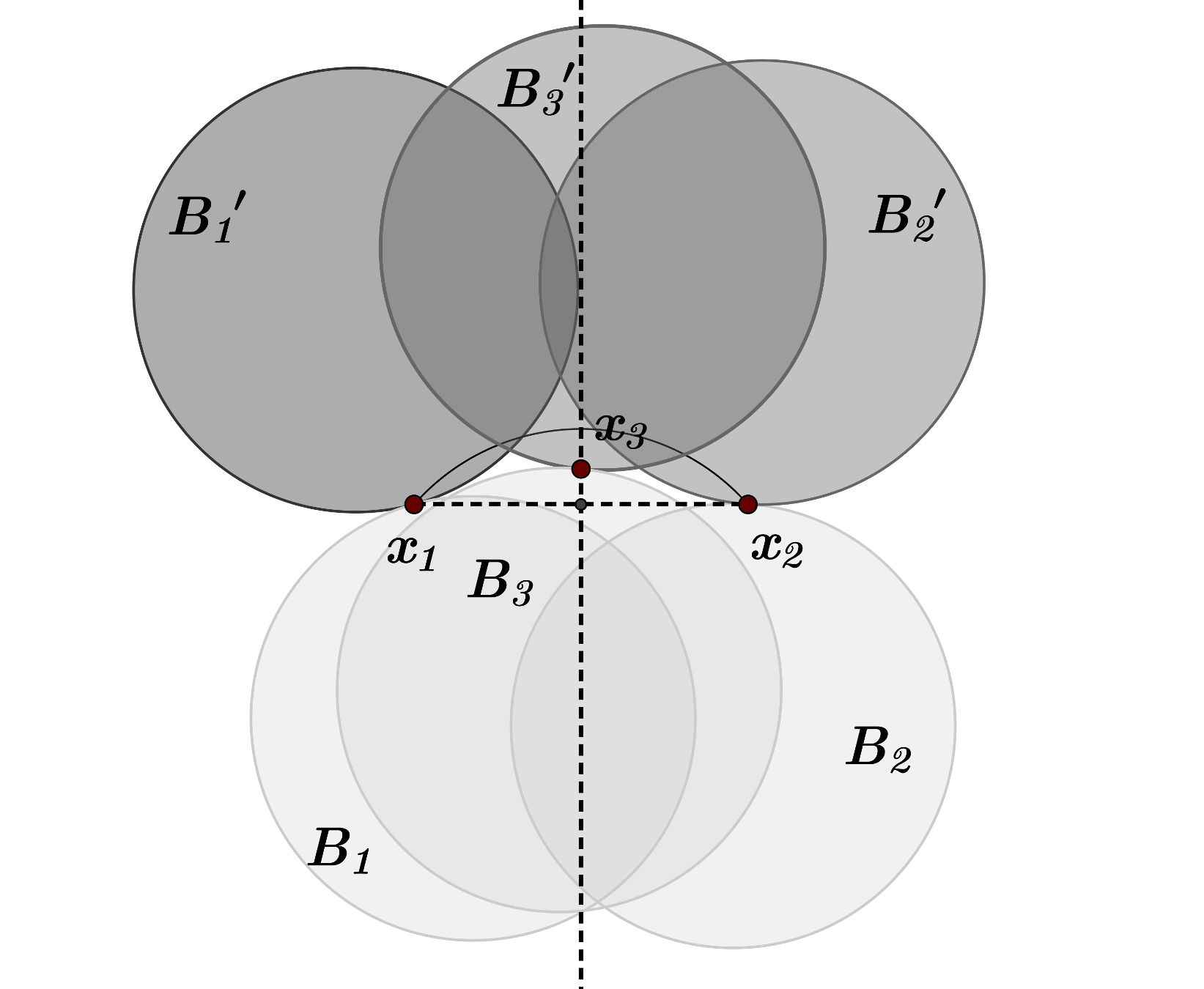}
\end{center}
\caption{A configuration of balls satisfying the Six Ball Lemma}
\label{fig:3}
\end{figure}

\begin{lema}[Six Ball Lemma]
Given three pairs of $r$-balls, $(B_i, B_i')$, $i=1,2,3$,
such that (see Fig.~\ref{fig:3}):
\begin{enumerate}
\item[$\mbox{\rm (a)}$]\, $B_i$ and $B_i'$ are tangent at $x_i$,  $i=1,2,3$,
\item[$\mbox{\rm (b)}$]\, $(B_1\cup B_2 \cup B_3) \cap  (B_1'\cup B_2' \cup B_3') = \emptyset$,
\item[$\mbox{\rm (c)}$]\,  $0<\nrm{x_1-x_2}<2r$,
\item[$\mbox{\rm (d)}$]\,  $\nrm{x_3-x_1}=\nrm{x_3-x_2}$ and $\nrm{ x_3-\frac{x_1+x_2}{2} }\leq \frac{1}{2}\nrm{x_1-x_2}$,
\item[$\mbox{\rm (e)}$]\, The line through the centres of $B_3$ and $B_3'$ is contained in the plane
determined by the points $x_1$, $x_2$ and $x_3$.
\end{enumerate}
Then we have
$$\nrm{ x_3-\frac{x_1+x_2}{2} }\leq r - \sqrt{ r^2 - 
\frac{\nrm{x_1-x_2}^2}{4} }.$$
\end{lema}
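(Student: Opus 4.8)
The plan is to reduce everything to a two–dimensional computation in the plane $\Pi$ determined by $x_1,x_2,x_3$, in which, by hypothesis~(e), the common normal direction of $B_3$ and $B_3'$ lives. First I would fix coordinates on $\Pi$ centred at $m=\frac{x_1+x_2}{2}$ so that $x_1=(-a,0)$, $x_2=(a,0)$ with $a=\frac12\nrm{x_1-x_2}$, and, using the equidistance part of~(d), place $x_3=(0,h)$ on the perpendicular bisector with $h=\nrm{x_3-m}$; the second part of~(d) then reads $h\le a$, and~(c) gives $0<a<r$. Writing the unit vector along the line through the centres of $B_3$ and $B_3'$ as $\nu=(\cos\theta,\sin\theta)\in\Pi$ (legitimate precisely because of~(e)), the two balls are $B(x_3-r\nu,r)$ and $B(x_3+r\nu,r)$.

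The crucial observation is that the entire six–ball hypothesis is needed only to guarantee the single fact that $x_1$ and $x_2$ lie outside both $B_3$ and $B_3'$. I would prove this from~(a) and~(b) alone: if, say, $x_1\in B_3$, then since $B_3$ is open and $x_1\in\partial B_1'$ (the tangency point from~(a)), every neighbourhood of $x_1$ meets the open ball $B_1'$, so $B_3\cap B_1'\neq\emptyset$, contradicting the separation~(b). The remaining three cases are identical, using respectively $B_3'\cap B_1=\emptyset$, $B_3\cap B_2'=\emptyset$ and $B_3'\cap B_2=\emptyset$, all consequences of~(b). Expanding ``$x_1\notin B(x_3\mp r\nu,r)$'' and using $\nrm{\nu}=1$ then yields $\Mod{\langle x_1-x_3,\nu\rangle}\le \frac{\nrm{x_1-x_3}^2}{2r}$, and likewise for $x_2$.

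It remains to feed in the coordinates. Since $\nrm{x_1-x_3}^2=\nrm{x_2-x_3}^2=a^2+h^2$, while $\langle x_1-x_3,\nu\rangle=-a\cos\theta-h\sin\theta$ and $\langle x_2-x_3,\nu\rangle=a\cos\theta-h\sin\theta$, taking the larger of the two bounds (via $\max(\Mod{X+Y},\Mod{X-Y})=\Mod{X}+\Mod{Y}$) gives
$$a\,\Mod{\cos\theta}+h\,\Mod{\sin\theta}\le \frac{a^2+h^2}{2r}.$$
Here condition~(e) pays off a second time: because $\nu$ is a genuine unit vector of $\Pi$ we have $\Mod{\cos\theta}+\Mod{\sin\theta}\ge 1$, so together with $h\le a$ from~(d),
$$\frac{a^2+h^2}{2r}\ge a\,\Mod{\cos\theta}+h\,\Mod{\sin\theta}\ge h\left(\Mod{\cos\theta}+\Mod{\sin\theta}\right)\ge h,$$
that is, $2rh\le a^2+h^2$. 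Since $h\le a<r$ forces $r-h>0$, rearranging and squaring the equivalent inequality $r-h\ge\sqrt{r^2-a^2}$ recovers exactly the claimed bound $\nrm{x_3-m}=h\le r-\sqrt{r^2-\frac{\nrm{x_1-x_2}^2}{4}}$.

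I expect the main obstacle to be conceptual rather than computational: resisting the temptation to control the (possibly out–of–plane) normals at $x_1$ and $x_2$ and to exploit the full mutual disjointness of all six balls. Recognizing that one needs nothing beyond $x_1,x_2\notin B_3\cup B_3'$, and that hypothesis~(e) is precisely the ingredient upgrading the otherwise weak inequality $a\,\Mod{\cos\theta}+h\,\Mod{\sin\theta}\le\frac{a^2+h^2}{2r}$ into the sharp bound through $\Mod{\cos\theta}+\Mod{\sin\theta}\ge 1$, is the heart of the matter; the rest is bookkeeping.
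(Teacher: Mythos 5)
Your proof is correct, and although it shares the paper's basic setup --- reducing to the plane of $x_1,x_2,x_3$, writing $a=\frac{1}{2}\nrm{x_1-x_2}$, $h=\nrm{x_3-\frac{x_1+x_2}{2}}$, and parametrizing the common normal direction of $B_3,B_3'$ by an angle $\theta$ --- the execution is genuinely different. The paper argues by contradiction: assuming $h>r-\sqrt{r^2-a^2}$, it studies $\varphi(\theta)=\nrm{x_3+\eta_3-x_2}^2=(r\cos\theta-h)^2+(r\sin\theta-a)^2$ on $[0,\frac{\pi}{2}]$ (after WLOG reductions on the sign of $\eta_3$ and the roles of $x_1,x_2$), checks that $\varphi<r^2$ at both endpoints, and then asserts that $\varphi$ attains its maximum at an endpoint, concluding $x_2\in B_3$ and contradicting (b). You instead isolate exactly the consequence of (a) and (b) that is actually used --- $x_1,x_2\notin B_3\cup B_3'$ --- package the four resulting inequalities as $a\Mod{\cos\theta}+h\Mod{\sin\theta}\leq\frac{a^2+h^2}{2r}$, and close directly via $\Mod{\cos\theta}+\Mod{\sin\theta}\geq 1$ together with $h\leq a$. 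What your route buys is twofold: it is a direct rather than indirect argument with no case splitting on signs or on which of $x_1,x_2$ to use, and it replaces the paper's unproved ``it is not difficult to see'' extremal analysis of $\varphi$ by a one-line trigonometric inequality. The geometric content is the same in both proofs (only the disjointness of $B_3,B_3'$ from $B_1',B_2',B_1,B_2$ is ever used, not the full hypothesis (b)). The single loose end in your write-up is the degenerate case $x_3=\frac{x_1+x_2}{2}$, where the plane in (e) is not determined; there $h=0$ and the conclusion is trivial, and the paper's proof has the same implicit omission since it normalizes $x_0-x_3$.
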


\begin{proof}
The proof goes by contradiction.
Let  $x_0 :=\frac{x_1+x_2}{2}$ and $s:=\nrm{x_1-x_2}$, so that $\nrm{x_3-x_0}\leq\frac{s}{2}$,
but assume that $h:=\nrm{x_3-x_0}>r-\sqrt{r^2-\frac{s^2}{4}}$.
We shall derive a contradiction from this.
Let $\eta= \frac{r}{\nrm{x_3-x_0}}(x_0-x_3)$
and $\eta'=\frac{r}{\nrm{x_2-x_1}}(x_2-x_1)$.
These are orthogonal vectors with $\nrm{\eta}=\nrm{\eta'}=r$.
Consider also the unique vector $\eta_3\in\R^ n$ such that (i) $\nrm{\eta_3}=r$,
(ii) $x_3+\eta_3$ is the centre of $B_3$, and (iii) $x_3-\eta_3$ is the centre of $B_3'$.
By assumption (e), the vector $\eta_3$ lies in the plane spanned by $\eta$ and $\eta'$.
We can assume that $\langle \eta,\eta_3\rangle\geq 0$. Otherwise we work with
$-\eta_3$ and $B_3'$ instead  of $\eta_3$ and $B_3$. Hence we can write
$\eta_3=(\cos\theta)\eta + (\sin\theta)\eta'$ with $\theta\in[-\frac{\pi}{2},\frac{\pi}{2}]$.
Exchanging the roles of $x_1$ and $x_2$ (if necessary) we can also assume that $\theta\in[0,\frac{\pi}{2}]$.
Now, consider the function $\varphi:[0,\frac{\pi}{2}]\to \R$,
\begin{align*}
\varphi(\theta) &= \nrm{x_3+\eta_3-x_2}^ 2 =
\nrm{x_3-x_2 + (\cos\theta)\eta + (\sin\theta)\eta'}^ 2\\
&= \nrm{ x_0-x_2 -\frac{h}{r}\eta  + (\cos\theta)\eta + (\sin\theta)\eta'}^ 2\\
&= \nrm{  \left(  \cos\theta -\frac{h}{r} \right)\eta + \left( \sin\theta -\frac{s}{2r}\right)\eta'}^ 2\\
&= \left(  r \cos\theta - h \right)^ 2 + \left( r \sin\theta -\frac{s}{2}\right)^ 2\;.
\end{align*}
Note that since $h> r-\sqrt{r^2-\frac{s^2}{4}}$,
$$\varphi(0)=(r-h)^ 2+(s/2)^2 < \left( r^2-\frac{s^2}{4} \right) + \frac{s^2}{4}=r^ 2\;. $$
Also, since $h\leq s/2$ and $s<2r$,
$$\varphi(\frac{\pi}{2})= h^2+(r-s/2)^ 2 \leq r^ 2 + s\left(\frac{s}{2}-r\right)<r^ 2\;. $$
It is not difficult to see that the function $\varphi(\theta)$ attains its minimum value
at the argument of the vector $ h\,\eta + \frac{s}{2} \eta'$ w.r.t. the orthonormal frame
$\{\eta,\eta'\}$, and its maximum value at one of the boundary points $\theta=0$ or
$\theta=\frac{\pi}{2}$. Thus, from the previous inequalities it follows that
$\nrm{x_3+\eta_3-x_2}^ 2=\varphi(\theta)<r^ 2$.
This shows that $x_2\in B_3$, and hence $B_3$ intersects both $B_2$ and $B_2'$,
thus contradicting assumption (b).

\end{proof}

\bigskip

\begin{rmk} For any $0< s < 2r$,
$$r-\sqrt{r^ 2-\frac{s^ 2}{4}} < \frac{s}{2} $$
with equality at the endpoints of $]0,2r[$.
Hence the conclusion of the Six Ball Lemma is an improvement of the
{\em a priori} bound in assumption $\mbox{\rm (d)}$.
\end{rmk}

\bigskip

Define $\varphi:[0,2r]\to [0,r]$,
$$\varphi(s)= r - \sqrt{ r^2 - \frac{s^2}{4}  } $$
and $\psi:[0,2r]\to \R$,
$$\psi(s)=  \sqrt{ \frac{s^2}{4} +\varphi(s)^2  }
= \sqrt{2 r^2-2 r\,\sqrt{r^2-\frac{s^2}{4}}}\;$$
(see Fig.~\ref{fig:4}).

\begin{figure}[h]
\begin{center}
 \includegraphics*[scale={.7}]{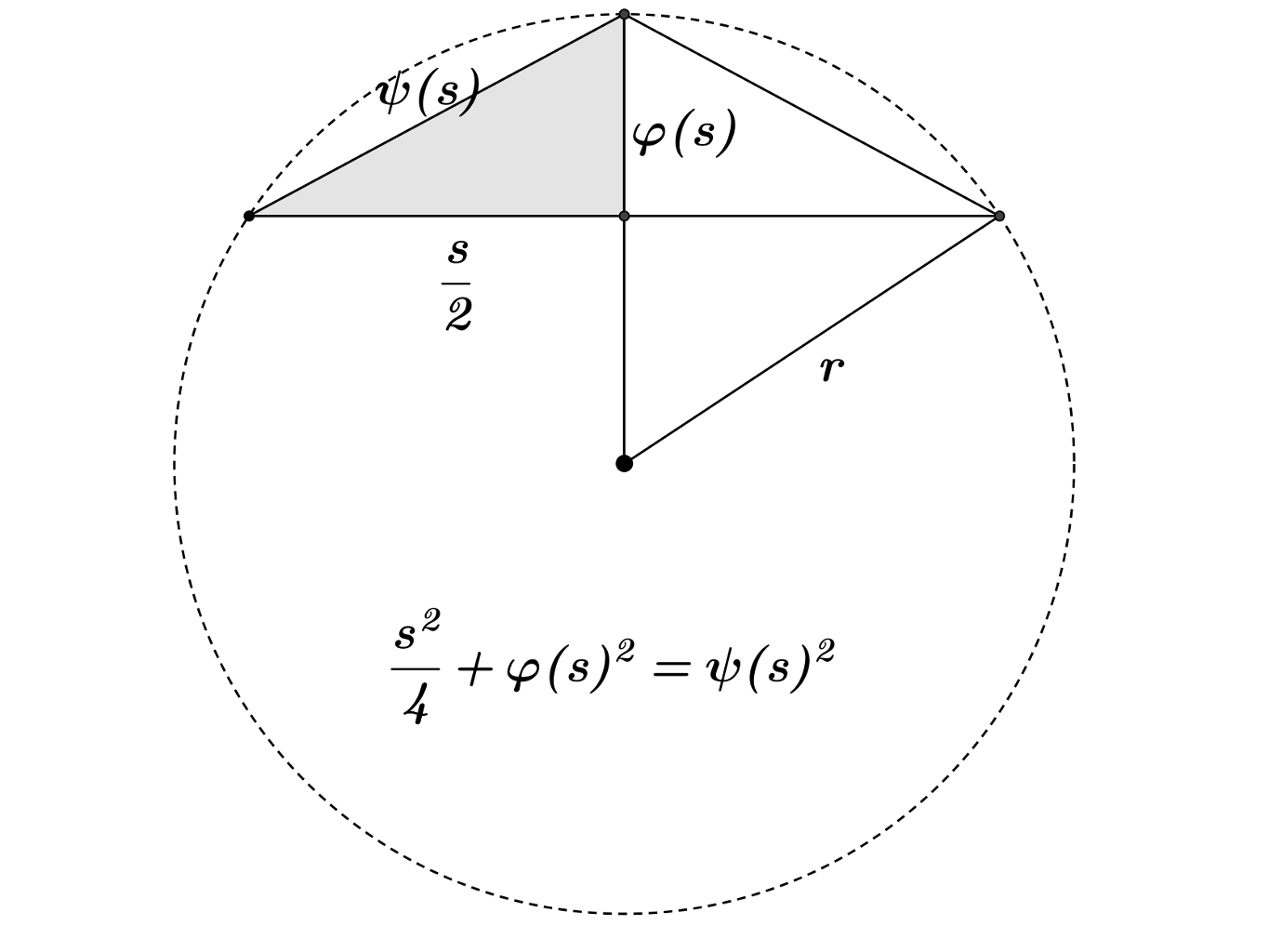} 
\end{center}
\caption{Geometric meaning of $\psi(s)$ and $\varphi(s)$}
\label{fig:4}
\end{figure}

\begin{coro}\label{mean:bdy:pt}
 Let $U\subseteq\R^n$ be an $r$-regular set.
Given $ x,y \in \partial U$ such that  $\nrm{x-y}<2r$
there is $z\in\partial U$  satisfying 
\begin{enumerate}
\item[$(1)$]\,$\langle z-\frac{x+y}{2},  y-x \rangle =0$,
\item[$(2)$]\,  $\nrm{z-\frac{x+y}{2}}\leq \varphi( \nrm{x-y})$,
\item[$(3)$]\, $\nrm{z-x}\leq\psi(\nrm{x-y})$,
\item[$(4)$]\, $\nrm{z-y}\leq\psi(\nrm{x-y})$.
\end{enumerate}
\end{coro}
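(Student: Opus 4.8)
The plan is to take the point $z$ produced by Corollary~\ref{partialU} and to sharpen its crude location estimate into conclusion $(2)$ by means of the Six Ball Lemma; conclusions $(3)$ and $(4)$ will then drop out by the Pythagorean theorem. First I would dispose of the degenerate case $x=y$ by taking $z=x$: since $\varphi(0)=\psi(0)=0$ all four conclusions hold trivially. So from now on assume $0<\nrm{x-y}<2r$ and let $z\in\partial U$ be the point furnished by Corollary~\ref{partialU}. Conclusion $(1)$ is then literally item $\mbox{\rm (a)}$ of that corollary.

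Next I would extract from $(1)$ the equidistance needed to feed the Six Ball Lemma. Expanding $\nrm{z-x}^2-\nrm{z-y}^2$ and using $\langle z-\frac{x+y}{2},\,y-x\rangle=0$ shows that $\nrm{z-x}=\nrm{z-y}$. Together with the a priori bound $\mbox{\rm (b)}$ of Corollary~\ref{partialU}, namely $\nrm{z-\frac{x+y}{2}}\leq\frac12\nrm{x-y}$, this supplies hypothesis $\mbox{\rm (d)}$ of the Six Ball Lemma for the triple $x_1=x$, $x_2=y$, $x_3=z$.

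Then I would assemble the six balls. Since $U$ is $r$-regular, at each of $x_1,x_2,x_3$ there is a pair of tangent $r$-balls, one contained in $U$ and one in $\overline U^c$; write $B_i'\subseteq U$ and $B_i\subseteq\overline U^c$. Hypothesis $\mbox{\rm (a)}$ holds by this very construction, and $\mbox{\rm (b)}$ holds because $B_1\cup B_2\cup B_3\subseteq\overline U^c$ while $B_1'\cup B_2'\cup B_3'\subseteq U$, and these are disjoint. Hypothesis $\mbox{\rm (c)}$ is the standing assumption $0<\nrm{x-y}<2r$. Finally, the centres of $B_3$ and $B_3'$ lie on the line through $z$ normal to $\partial U$, which by item $\mbox{\rm (c)}$ of Corollary~\ref{partialU} lies in the plane determined by $x,y,z$; this is hypothesis $\mbox{\rm (e)}$. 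The Six Ball Lemma then gives $\nrm{z-\frac{x+y}{2}}\leq r-\sqrt{r^2-\frac{\nrm{x-y}^2}{4}}=\varphi(\nrm{x-y})$, which is conclusion $(2)$.

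Conclusions $(3)$ and $(4)$ follow at once by orthogonal decomposition. Since $z-\frac{x+y}{2}\perp(y-x)$ by $(1)$, and $\frac{x+y}{2}-x=\frac{y-x}{2}=-\bigl(\frac{x+y}{2}-y\bigr)$, the Pythagorean theorem yields $\nrm{z-x}^2=\nrm{z-y}^2=\nrm{z-\frac{x+y}{2}}^2+\frac14\nrm{x-y}^2\leq\varphi(\nrm{x-y})^2+\frac14\nrm{x-y}^2=\psi(\nrm{x-y})^2$, giving both $(3)$ and $(4)$. The only step where genuine content is hidden is the verification that the tangent-ball configuration at $x$, $y$, $z$ meets all five hypotheses of the Six Ball Lemma — in particular hypotheses $\mbox{\rm (d)}$ and $\mbox{\rm (e)}$, which is precisely where Corollary~\ref{partialU} was engineered to deliver what is required; the remaining manipulations are elementary.
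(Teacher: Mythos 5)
Your proposal is correct and follows exactly the paper's (very terse) proof: take $z$ from Corollary~\ref{partialU}, upgrade the bound via the Six Ball Lemma to get $(1)$ and $(2)$, and deduce $(3)$ and $(4)$ by the Pythagorean theorem. The extra details you supply --- the equidistance $\nrm{z-x}=\nrm{z-y}$ from the orthogonality in $(1)$, the verification of hypotheses (a)--(e) of the Six Ball Lemma using the tangent balls provided by $r$-regularity, and the degenerate case $x=y$ --- are all sound and merely make explicit what the paper leaves implicit.
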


\begin{proof} Using Corollary~ \ref{partialU}, an application of the Six Ball Lemma gives $(1)$ and $(2)$.
 Pithagoras' Theorem implies $(3)$ and $(4)$.
\end{proof}

\bigskip

\begin{lema} \label{Psi:lemma}
Given $s\in [0,2r]$, 
 $$ \lim_{n\to+\infty}  {2^n}\,\psi^n(s)= 
 2 r \, \arctan\left( \frac{s }{\sqrt{4 r^2 - 
s^2} } \right) \;. $$
\end{lema}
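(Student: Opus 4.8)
The plan is to expose a clean trigonometric meaning for the iteration of $\psi$, after which the limit becomes an elementary exercise. The key observation is that, although the closed form $\psi(s)=\sqrt{2r^2-2r\sqrt{r^2-s^2/4}}$ looks opaque, it trivializes under the substitution $s=2r\sin\alpha$ with $\alpha\in[0,\pi/2]$: in this variable $\psi$ is simply the angle-halving map $\alpha\mapsto\alpha/2$. Note that for $s\in[0,2r]$ this substitution is a bijection via $\alpha=\arcsin(s/2r)\in[0,\pi/2]$, and that the range of $\psi$ stays inside $[0,2r]$, so the iterates $\psi^n(s)$ are well defined.

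First I would carry out the crucial computation. Setting $s=2r\sin\alpha$ gives $\sqrt{r^2-s^2/4}=r\cos\alpha$, whence
$$\psi(s)^2 = 2r^2-2r^2\cos\alpha = 2r^2(1-\cos\alpha) = 4r^2\sin^2(\alpha/2),$$
using the half-angle identity $1-\cos\alpha=2\sin^2(\alpha/2)$. Since $\alpha/2\in[0,\pi/4]$ the sine is nonnegative, so taking square roots yields the identity $\psi(2r\sin\alpha)=2r\sin(\alpha/2)$.

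Next I would iterate. Writing $\alpha=\arcsin(s/2r)$, an immediate induction (each application of $\psi$ halves the angle, and the successive half-angles $\alpha/2^n$ remain in $[0,\pi/2]$) gives $\psi^n(s)=2r\sin(\alpha/2^n)$. Therefore
$$2^n\,\psi^n(s) = 2r\cdot 2^n\sin\!\left(\frac{\alpha}{2^n}\right).$$
If $\alpha=0$ (that is $s=0$) both sides vanish; otherwise $2^n\sin(\alpha/2^n)=\alpha\,\dfrac{\sin(\alpha/2^n)}{\alpha/2^n}\to\alpha$ as $n\to+\infty$, by the standard limit $\lim_{x\to0}\sin(x)/x=1$. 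Hence $\lim_{n\to+\infty}2^n\psi^n(s)=2r\,\alpha$. Finally I would rewrite $\alpha$ in the stated form: since $\cos\alpha=\sqrt{1-s^2/4r^2}=\sqrt{4r^2-s^2}/(2r)$, one has $\tan\alpha=\sin\alpha/\cos\alpha=s/\sqrt{4r^2-s^2}$, so $\alpha=\arctan\!\big(s/\sqrt{4r^2-s^2}\big)$, giving exactly $2r\arctan\!\big(s/\sqrt{4r^2-s^2}\big)$.

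The only genuinely delicate point—hardly an obstacle—is recognizing the substitution $s=2r\sin\alpha$ that turns $\psi$ into angle-halving; once this is in hand, everything reduces to the half-angle identity, the elementary limit $2^n\sin(\alpha/2^n)\to\alpha$, and a trigonometric rewriting. As a consistency check one can verify the endpoints of $[0,2r]$: at $s=0$ both sides are $0$, and at $s=2r$ we have $\alpha=\pi/2$, so both sides equal $\pi r$, matching the fact that the limit is the geodesic (arc) length $2r\arcsin(s/2r)$ of a chord of length $s$ on a sphere of radius $r$.
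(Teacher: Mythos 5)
Your proof is correct, and it takes a genuinely different route from the paper's. The paper gives a purely geometric proof: it places a chord $\overline{AB}$ of length $s$ in a circle of radius $r$, recursively replaces each side of a polygonal line by two equal sides with the new vertex on the circle, checks inductively that the $n$-th polygonal line has length $2^n\psi^n(s)$, and then invokes convergence of the lengths of these inscribed polygons to the arc length $2r\arctan\bigl(s/\sqrt{4r^2-s^2}\bigr)$. You instead make the underlying circle explicit through the substitution $s=2r\sin\alpha$, under which $\psi$ becomes the angle-halving map $\alpha\mapsto\alpha/2$, obtain the closed form $\psi^n(s)=2r\sin(\alpha/2^n)$, and conclude from the elementary limit $2^n\sin(\alpha/2^n)\to\alpha$. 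The two arguments encode the same geometric fact, but yours replaces the paper's appeal to ``it is easy to check inductively'' and to the convergence of polygonal approximations by a short, fully explicit computation, so it is arguably tighter and more self-contained; the paper's version buys geometric transparency (it is literally the classical exhaustion of an arc by chords), which also motivates how $\psi$ is used in the theorem that follows. Two cosmetic remarks: at $s=2r$ the expression $s/\sqrt{4r^2-s^2}$ degenerates and must be read as $\arctan(+\infty)=\pi/2$, which your endpoint check handles correctly; and the ``sphere'' in your final sentence should of course be ``circle''.
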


\begin{proof}[Geometric Proof]
 Consider a chord $\overline{AB}$ of length $s<2r$ in a circle $\mathscr{C}$
 of radius $r$. Let $\Gamma$ denote the
 shortest arch of $\mathscr{C}$ connecting
 the points $A$ and $B$, which has length
  $2 r \, \arctan\left( \frac{s }{\sqrt{4 r^2 - 
s^2} } \right)$.

 We can approximate $\Gamma$ by a
 polygonal line $\Gamma_n$ consisting of  $2^n$ equal sides 
 and all vertexes in $\mathscr{C}$.
 Recursively, we set $\Gamma_0=\overline{AB}$ and define $\Gamma_{n}$
 to be the polygonal line obtained from $\Gamma_{n-1}$
 replacing each side  $\overline{XY}$ of $\Gamma_{n-1}$
 by the two equal sides $\overline{XZ}$ and $\overline{ZY}$,
 where $Z$ is the nearest  point where the line that bisects $\overline{XY}$ intersects the circle $\mathscr{C}$.
It is easy to check, inductively in $n$, that
 ${\rm length}(\Gamma_n)= 2^n\, \psi^n(s)$ for every $n\geq 0$.
 Thus
  $$ 
\begin{array}{rcl}
 \displaystyle{ \lim_{n\to+\infty}}  {2^n}\,\psi^n(s) & = &
  \displaystyle{\lim_{n\to+\infty}} {\rm length}(\Gamma_n) = 
  {\rm length}(\Gamma) \\
  {} & = & 2 r \, \arctan\left( \frac{s }{\sqrt{4 r^2 - 
s^2} } \right) \;. 
\end{array}
$$
\end{proof}

\bigskip

The following theorem proves Proposition~\ref{intrinsic}.

\begin{teor}
 Let $U\subseteq\R^n$ be an $r$-regular set.
Given $ x,y \in \partial U$ such that  $\nrm{x-y}<2r$,
the intrinsic distance between $x$ and $y$ in $\partial U$
satisfies
$$ d_{\partial U}(x,y) \leq 2 r \arctan\left( \frac{\nrm{x-y} }{\sqrt{4 r^2 - 
\nrm{x-y}^2} } \right)<\pi\,r\;.$$
\end{teor}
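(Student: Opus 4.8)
The plan is to bound $d_{\partial U}(x,y)$ from above by the lengths of polygonal chains in $\partial U$ obtained by \emph{iterated bisection}, using Corollary~\ref{mean:bdy:pt} to insert, between any two sufficiently close boundary points, a third boundary point no farther than $\psi$ of their separation. Writing $s_0:=\nrm{x-y}<2r$, I would first record two elementary facts about $\psi$: it is monotone increasing on $[0,2r]$, and it maps $[0,2r]$ into $[0,r\sqrt{2}]\subset[0,2r)$; moreover a direct computation gives $\psi(s)<s$ for every $s\in(0,2r)$. These facts are exactly what allow the bisection to be iterated indefinitely while keeping all edge lengths strictly below $2r$, the hypothesis under which Corollary~\ref{mean:bdy:pt} applies.

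Next I would construct recursively a sequence of chains $P_n=(p_0,\dots,p_{2^n})$ in $\partial U$ with $p_0=x$ and $p_{2^n}=y$. Set $P_0=(x,y)$. Given $P_{n-1}$, replace each edge $[a,b]$ by the two edges $[a,z]$ and $[z,b]$, where $z\in\partial U$ is the point furnished by Corollary~\ref{mean:bdy:pt} applied to the pair $(a,b)$; by parts $(3)$ and $(4)$ of that corollary, $\nrm{z-a}\leq\psi(\nrm{a-b})$ and $\nrm{z-b}\leq\psi(\nrm{a-b})$. An immediate induction using the monotonicity of $\psi$ shows that every edge of $P_n$ has length at most $\psi^n(s_0)$: if each edge of $P_{n-1}$ has length $\leq\psi^{n-1}(s_0)<2r$, then each new edge has length $\leq\psi(\psi^{n-1}(s_0))=\psi^n(s_0)$. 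Hence $P_n$ has $2^n$ edges, each of length at most $\psi^n(s_0)$, so its total length is at most $2^n\,\psi^n(s_0)$ and all its consecutive distances are at most $\psi^n(s_0)$.

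Finally I would pass to the intrinsic metric. Since $\psi(s)<s$ on $(0,2r)$ and $\psi$ is increasing, the orbit $\psi^n(s_0)$ decreases to the unique fixed point $0$, so $\psi^n(s_0)\to 0$ (this is also forced by the convergence of $2^n\psi^n(s_0)$ in Lemma~\ref{Psi:lemma}). Fix $\epsilon>0$; for all $n$ large enough that $\psi^n(s_0)<\epsilon$, the chain $P_n$ is admissible in the definition of $d_\epsilon(x,y)$, whence $d_\epsilon(x,y)\leq 2^n\psi^n(s_0)$. Letting $n\to\infty$ and invoking Lemma~\ref{Psi:lemma} gives
$$d_\epsilon(x,y)\leq 2r\arctan\!\left(\frac{s_0}{\sqrt{4r^2-s_0^2}}\right)$$
for every $\epsilon>0$; taking the supremum over $\epsilon$ then yields the claimed bound on $d_{\partial U}(x,y)=\sup_{\epsilon>0}d_\epsilon(x,y)$. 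The strict inequality $<\pi r$ follows because $s_0<2r$ makes the argument of the arctangent finite, so $\arctan(\cdot)<\pi/2$.

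The step I expect to be the main (if modest) obstacle is the bookkeeping that keeps the hypothesis $\nrm{a-b}<2r$ valid at \emph{every} level of the recursion; this is precisely where the monotonicity of $\psi$ together with the range bound $\psi([0,2r])\subseteq[0,r\sqrt{2})\subseteq[0,2r)$ is essential, and it is also what guarantees $\psi^n(s_0)\to 0$ so that the finite chains eventually become $\epsilon$-admissible for every $\epsilon>0$.
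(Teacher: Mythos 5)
Your proposal is correct and follows essentially the same route as the paper: iterated bisection of the chord via Corollary~\ref{mean:bdy:pt}, the bound $2^n\,\psi^n(\nrm{x-y})$ on the lengths of the resulting polygonal chains with vertices in $\partial U$, and Lemma~\ref{Psi:lemma} to pass to the limit. The only difference is in the final step: the paper passes to a limit Lipschitz curve $\Gamma$ and bounds its length, whereas you estimate $d_\epsilon(x,y)$ directly from the chains once $\psi^n(\nrm{x-y})<\epsilon$, which if anything is more closely aligned with the paper's chain-based definition of $d_{\partial U}$.
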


\begin{proof}
 Applying Corollary~\ref{mean:bdy:pt} inductively
 we can construct a sequence of polygonal lines
 $\Gamma_n$ with $2^n$ equal sides, and all vertexes
 in $\partial U$. By construction the polygonal
 line $\Gamma_n$ has length $\leq 2^n\,\psi^n(\nrm{x-y})$,
 and can be parametrized as a Lipschitz curve with
 Lipschitz constant $1/r$. The sequence $\Gamma_n$
 is a Cauchy sequence w.r.t. $\nrm{\cdot}_\infty$.
 Therefore in view of Lemma~\ref{Psi:lemma} $\Gamma=\lim_{n\to\infty}\Gamma_n$ is 
 a Lipschitz curve connecting $x$ to $y$ with length
 $${\rm length}(\Gamma)\leq 2 r \arctan\left( \frac{\nrm{x-y} }{\sqrt{4 r^2 - 
\nrm{x-y}^2} } \right) \;.$$
\end{proof}

\bigskip

Together, Proposition~\ref{localmain}, Proposition~\ref{mainglobal}  and Proposition~\ref{intrinsic} prove Theorem~\ref{main}.

\section{Conclusions and Future Work}
\label{sec:9}
In this paper we have provided a characterization of $r$-regular sets in terms of the Lipschitz regularity of normal vector fields to the boundary.

We would like to consider a possible extension of the notion of $r$-regular sets regarding its application to the
problem of surface reconstruction. We also plan to pursue our research on smooth non-deterministic
dynamical systems where the `smoothness' of the boundary of $r$-regular sets will be applied.

\bigskip

Both authors would like to thank Armando P. Machado and Alessandro Margheri for valuable
suggestions on the problems related to the Sturm-Liouville section.
The first author was supported by ``Funda\c{c}\~{a}o para a Ci\^{e}ncia e a Tecnologia''
 through the Program POCI 2010 and the Project
 ``Randomness in Deterministic Dynamical Systems and Applications'' (PTDC-MAT-105448-2008).
The second author was partially financed by FEDER Funds through ``Programa Operacional Factores de Competitividade - COMPETE'' and by Portuguese Funds through FCT - ``Funda\c{c}\~{a}o para a Ci\^{e}ncia e a Tecnologia'', within the Project PEst-OE/MAT/UI0013/2014.

\end{document}